\renewcommand{\epsilon}{{\varepsilon}}
\numberwithin{equation}{section}
\newtheorem{theorem}{Theorem}[section]
\newtheorem{lemma}[theorem]{Lemma}
\newtheorem{remark}[theorem]{Remark}
\newtheorem{definition}[theorem]{Definition}
\newtheorem{proposition}[theorem]{Proposition}
\newtheorem{corollary}[theorem]{Corollary}
\def\({\left(}
\def\){\right)}
\def\<{\left\langle}
\def\>{\right\rangle}
\begin{document}

\title[NLS with rotation]
{Global well-posedness, blow-up and stability of standing waves for supercritical NLS with rotation}

\author[Alex H. Ardila]{Alex H. Ardila}
\address{Univ. Federal de Minas Gerais\\ ICEx-UFMG\\ CEP
  30123-970\\ MG, Brazil} 
\email{ardila@impa.br}
\author[Hichem Hajaiej]{Hichem Hajaiej}
\address{Department of Mathematics, California State University Los Angeles\\ 5151 University Drive, \\ Los Angeles}
\email{hhajaie@calstatela.edu}

\begin{abstract}
We consider the focusing mass supercritical nonlinear Schr\"odinger equation with rotation
\begin{equation*}
 iu_{t}=-\frac{1}{2}\Delta u+\frac{1}{2}V(x)u-|u|^{p-1}u+L_{\Omega}u,\quad (x,t)\in \mathbb{R}^{N}\times\mathbb{R},
\end{equation*}
where $N=2$ or $3$ and $V(x)$ is an anisotropic harmonic potential. Here $L_{\Omega}$ is the quantum mechanical angular momentum operator.  We establish conditions for global existence and blow-up in the energy space. Moreover, we prove strong instability of standing waves under certain conditions on the rotation and the frequency of the wave. Finally, we construct
orbitally stable standing waves solutions by considering a suitable local minimization problem. Those results are obtained  for nonlinearities which are $L^{2}$-supercritical. 
\end{abstract}

\subjclass[2010]{35Q55, 37K45, 35P25}
\keywords{NLS; angular momentum; ground states; global existence; blow-up; stability; instability.}

\maketitle

\medskip

\section{Introduction}

Consider the focusing nonlinear Schr\"odinger equation with rotation
\begin{equation}\label{GP}
 \begin{cases} 
iu_{t}=-\frac{1}{2}\Delta u+\frac{1}{2}V(x)u-|u|^{p-1}u+L_{\Omega}u,\quad (x,t)\in \mathbb{R}^{N}\times\mathbb{R},\\
u(x,0)=u_{0}(x),
\end{cases} 
\end{equation}
where $N=2$ or $3$, $u:\mathbb{R}^{N}\times\mathbb{R}\rightarrow \mathbb{C}$ and $1<p<2^{\ast}$. Here $2^{\ast}$ is defined by ${2}^{\ast}=1+\frac{4}{N-2}$ if $N= 3$, and $2^{\ast}=\infty$ if $N=2$. The  potential $V(x)$ is assumed to be harmonic,
\[ V(x)=\sum^{N}_{j=1}\gamma^{2}_{j}x^{2}_{j}, \quad x=(x_{1}, \ldots, x_{N})\in\mathbb{R}^{N}, \quad \gamma_{j}\in \mathbb{R}\setminus\left\{0\right\}.\]
The parameters $\gamma_{j}$ represent the harmonic trapping frequencies in each spatial direction. Through this paper we will assume that $\gamma:=\min_{1\leq j\leq N}\left\{\gamma_{j}\right\}>0$. The quantum mechanical angular momentum operator
$L_{\Omega}$ is expressed by $L_{\Omega}:=-\Omega \cdot L$, $L:=-ix \wedge \nabla$, where  $\Omega\in \mathbb{R}^{3}$ is the angular velocity vector. Notice that in $N=2$ the angular momentum operator takes the form:
\[ L_{\Omega}=-i|\Omega|(x_{1}\partial_{x_{2}}-x_{2}\partial_{x_{1}}),\]
where $\Omega=(0,0, |\Omega|)\in \mathbb{R}^{3}$.  When the angular momentum operator $L_{\Omega}=0$, Eq. \eqref{GP}  is known as a model to describe the Bose-Einstein condensate under a magnetic trap. We refer the readers to \cite{ACM2020, Fukui2000, OhSi, ZEc} for more information. If $L_{\Omega}\neq 0$, the model equation \eqref{GP} describes the Bose Einstein condensate  with rotation, which appears in a variety of physical settings such as the description of nonlinear waves and propagation of a laser beam in the optical fiber \cite{AFP1, RSP2}. 
We refer the readers to \cite{LiebSeiring2006} for a rigorous derivation in the stationary case of \eqref{GP}. Recently, the equation \eqref{GP} has attracted attentions due to their significance in theory and applications, see \cite{AEWP, ANS2019, RM2006, AntoineTangZhang2016, BaoCai2013, Guo2011, BHHZ} and references therein. Antonelli et al. in \cite{AEWP} proved existence and uniqueness of the Cauchy problem. Moreover, they also showed the existence of blow-up solutions in the $L^{2}$-critical and supercritical case (see also \cite{ANS2019}).  The issue of stability of standing waves in the $L^{2}$-subcritical case have been investigated in \cite{ANS2019}.

Note that we can rewrite the equation \eqref{GP} as
\begin{equation*}
iu_{t}=\frac{1}{2}R_{\Omega}u-|u|^{p-1}u,
\end{equation*}
where the operator $R_{\Omega}:=-\Delta +V(x)+2L_{\Omega}$ admit a precise interpretation as self-adjoint operator on $L^{2}(\mathbb{R}^{N})$ associated with the quadratic form (see \cite[Proposition 3.1]{ARE})
\begin{equation*}
\mathfrak{t}[u]:=\|\nabla u\|_{2}^{2}+\int_{\mathbb{R}^{N}}V(x)|u(x)|^{2}dx+2l_{\Omega}(u)
\end{equation*}
defined on the domain 
\begin{equation*} 
\mathrm{dom}(\mathfrak{t})=\Sigma:=\left\{u\in H^{1}(\mathbb{R}^{N}):|x|^{}u\in L^{2}(\mathbb{R}^{N}) \right\}.
\end{equation*}
Here $l_{\Omega}(u):=\left\langle L_{\Omega}u, u\right\rangle$ is the angular momentum. We observe that an integration by parts shows that the angular momentum  $l_{\Omega}(u)$ is always real valued. Formally, the NLS \eqref{GP} has the following two conserved quantities. The first conserved quantity is the energy 
\begin{equation*}
E_{\Omega}(u)=\frac{1}{2}\mathfrak{t}[u]-\frac{2}{p+1}\int_{\mathbb{R}^{N}}|u|^{p+1}dx.
\end{equation*}
The other conserved quantity is the mass
\begin{equation*}
M(u)=\|u\|^{2}_{2}.
\end{equation*}
Notice that due to the appearance of the angular momentum term, the energy functional $E_{\Omega}$ fails to be finite as well of class $C^{1}$ on $H^{1}(\mathbb{R}^{N})$ (even when the potential $V(x)$ is chosen to be identically zero). The local well-posedness for the Cauchy problem \eqref{GP} in the energy space $\Sigma$, equipped with the norm 
\begin{equation*}
\|u\|^{2}_{\Sigma}=\int_{\mathbb{R}^{N}}\left(\left|\nabla u\right|^{2}+|x|^{2}|u|^{2}+|u|^{2}\right)dx,
\end{equation*}
 can be proved using  Strichartz estimates \cite[Theorem 2.2]{AEWP}. More precisely, we have the following result.
\begin{proposition}
Let $u_{0}\in \Sigma$. Then there exists $T_{+}\in(0, \infty]$ and a unique maximal solution $u\in C([0,T_{+}), \Sigma)$ of the Cauchy problem \eqref{GP} with $u(0)=u_{0}$.  If $T_{+}=\infty$, then  $u$ is called a global solution in positive time. If $T_{+}<\infty$, then 
\begin{equation*}
\lim_{t\rightarrow T_{+}} \|\nabla u(t)\|^{2}_{2}=\infty 
\end{equation*}
and $u$ is called blows up in positive time.  Moreover, the solution enjoys the conservation of energy and mass  i.e.,
\begin{equation}\label{Cls}
E_{\Omega}(u(t))=E_{\Omega}(u_{0}), \quad M(u(t))=M(u_{0}) \quad \text{for every $t\in [0,T_{+})$.}
\end{equation}
\end{proposition}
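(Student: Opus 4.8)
The plan is to recast \eqref{GP} in Duhamel form using the unitary group $U(t):=e^{-\frac{i}{2}tR_{\Omega}}$ generated on $L^{2}(\mathbb{R}^{N})$ by the self-adjoint operator $R_{\Omega}$, so that solving \eqref{GP} amounts to finding a fixed point of
\[
\Phi(u)(t)=U(t)u_{0}+i\int_{0}^{t}U(t-s)\,|u(s)|^{p-1}u(s)\,ds.
\]
The essential analytic input is the Strichartz machinery for $U(t)$: by \cite[Theorem 2.2]{AEWP}, in spite of the harmonic trapping $V$ and the rotation term $L_{\Omega}$, the propagator obeys the same local-in-time Strichartz estimates as the free Schr\"odinger group for every admissible pair, and these estimates can be propagated to the energy level of $\Sigma$. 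I would fix an admissible pair adapted to the exponent $p$ and run a contraction argument for $\Phi$ on a ball of $C([0,T];\Sigma)$ intersected with the relevant Strichartz space.

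Granting the Strichartz bounds, the contraction is carried out in the usual way. Combining them with the local Lipschitz estimate $\big\||u|^{p-1}u-|v|^{p-1}v\big\|\lesssim(\|u\|^{p-1}+\|v\|^{p-1})\|u-v\|$, valid for $1<p<2^{\ast}$, one shows that for $T>0$ small enough depending only on $\|u_{0}\|_{\Sigma}$ the map $\Phi$ is a contraction on a suitable closed ball, which yields a unique local solution in $\Sigma$ together with continuous dependence. A standard gluing argument then produces a maximal existence time $T_{+}\in(0,\infty]$; since $T$ depends only on the $\Sigma$-size of the data, the solution extends as long as $\|u(t)\|_{\Sigma}$ remains bounded.

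The conservation laws \eqref{Cls} come next. Mass conservation follows by pairing the equation with $u$ and taking imaginary parts, using that $R_{\Omega}$ is self-adjoint and $l_{\Omega}(u)$ real. For the energy, since $E_{\Omega}$ is $C^{1}$ on $\Sigma$ (the angular-momentum term being finite and smooth there), one differentiates $t\mapsto E_{\Omega}(u(t))$ along a regularized solution, finds $\frac{d}{dt}E_{\Omega}(u(t))=0$, and passes to the limit by density and the continuity of the flow in $\Sigma$.

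Finally, the continuation criterion only tells us that $T_{+}<\infty$ forces $\|u(t)\|_{\Sigma}\to\infty$, so it remains to upgrade this to $\|\nabla u(t)\|_{2}^{2}\to\infty$ alone. As $M(u(t))$ is constant, I only need to control the weight $\|xu(t)\|_{2}$. Here I would use the virial-type identity
\[
\frac{d}{dt}\|xu(t)\|_{2}^{2}=2\,\IM\int_{\mathbb{R}^{N}}\bar u\,(x\cdot\nabla u)\,dx,
\]
in which the contributions of $V$, of $L_{\Omega}$ (which commutes with $|x|^{2}$ and hence yields a real integral), and of the nonlinearity all drop out; by Cauchy--Schwarz this gives $\big|\tfrac{d}{dt}\|xu\|_{2}\big|\lesssim\|\nabla u\|_{2}$, so $\|xu(t)\|_{2}$ grows at most linearly and stays finite on bounded intervals whenever $\|\nabla u\|_{2}$ is bounded. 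Combined with the $\Sigma$-continuation criterion, this yields the stated dichotomy. I expect the principal difficulty to be the energy-level Strichartz theory for $U(t)$ with the rotation operator $L_{\Omega}$ present, which couples position and momentum and prevents one from commuting $\nabla$ and $x$ naively through the flow --- precisely the point settled in \cite{AEWP}; the remainder is routine.
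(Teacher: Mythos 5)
The paper offers no proof of this proposition at all --- it simply invokes \cite[Theorem 2.2]{AEWP} --- and your outline (Duhamel formula for $e^{-\frac{i}{2}tR_{\Omega}}$, local-in-time Strichartz estimates for the rotating harmonic propagator, contraction in a $\Sigma$-level Strichartz space, conservation laws by regularization) is precisely the method of that cited reference, so in substance you are taking the same route the authors intend. Your identification of the real difficulty --- propagating the estimates to the $\nabla u$ and $xu$ levels when $L_{\Omega}$ couples position and momentum --- is also the right one.

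One step does fall short of the stated conclusion: your virial bound $\bigl|\tfrac{d}{dt}\|xu(t)\|_{2}\bigr|\lesssim\|\nabla u(t)\|_{2}$ shows only that $\|\nabla u(t)\|_{2}$ cannot stay bounded on all of $[0,T_{+})$ when $T_{+}<\infty$, i.e.\ it yields $\limsup_{t\to T_{+}}\|\nabla u(t)\|_{2}^{2}=\infty$, not the full limit claimed in the proposition. To upgrade this, suppose $t_{n}\to T_{+}$ with $\|\nabla u(t_{n})\|_{2}\leq M$; then energy conservation together with the inequality \eqref{Emii} (choosing $a>|\Omega|^{2}/\gamma^{2}$, so that the coefficient of $\|xu(t_{n})\|_{2}^{2}$ is positive) and the Gagliardo--Nirenberg bound on the nonlinear term forces $\|xu(t_{n})\|_{2}$, hence $\|u(t_{n})\|_{\Sigma}$, to be bounded, and the local theory restarts from $t_{n}$ with a uniform existence time, contradicting maximality. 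This is exactly the mechanism the paper itself uses in the proof of Theorem \ref{SC}(i), and it closes the gap; the rest of your argument is sound.
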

We note that the evolution of the angular momentum under the flow generated by \eqref{GP} is given by (see \cite[Theorem 2.1.]{AEWP})
\begin{equation}\label{Amf}
l_{\Omega}(u(t))=l_{\Omega}(u_{0})+\int^{t}_{0}\int_{\mathbb{R}^{N}}i|u(x,t)|^{2}(\Omega\cdot L)V(x)dx, \quad t\in [0, T_{+}).
\end{equation}
By using a time-dependent change of coordinates and the conservation laws \eqref{Cls}, we have the global existence of Cauchy problem \eqref{GP}  in the $L^{2}$-subcritical case $1<p<1+\frac{4}{N}$ (see \cite[Theorem 2.2]{AEWP} for more details). As observed in \cite{AEWP}, we have that in the $L^{2}$-supercritical case $1+\frac{4}{N}<p<2^{\ast}$ blow-up of the solution may occur. In the super-critical case, the sharp thresholds of blow-up and global existence become very interesting.  In our first result, we establish sufficient and necessary conditions of global existence and  blow-up in finite time for the rotational NLS \eqref{GP} in the  mass supercritical regime. 
\begin{remark}
If the trapping frequencies are equal in each spatial direction, i.e., $\gamma=\gamma_{j}$ for all $j=1$, $\ldots$, $N$, then we also have the conservation of the angular momentum $ l_{\Omega}(u(t))=l_{\Omega}(u_{0})$ for every $t\in [0,T_{+})$. In particular, since we have the conservation of the angular momentum, it is not difficult to show that the condition $\|u_{0}\|_{2}<\|Q\|_{2}$ is sharp for global existence in the $L^{2}$-critical case $p=1+\frac{4}{N}$, where $Q$ is the unique positive and radially symmetric solution of 
\begin{equation}\label{Izx}
-\frac{1}{2}\Delta Q+Q-|Q|^{p-1}Q=0 \quad\text{in} \quad \mathbb{R}^{N}
\end{equation}
with $p=1+\frac{4}{N}$. 
\end{remark}
It is  convenient to introduce the number $s_{c}$ defined as
\begin{equation*}
s_{c}:=\frac{N}{2}-\frac{2}{p-1}.
\end{equation*}
Notice that $0\leq s_{c}<1$ if and only if $1+\frac{4}{N}\leq p<2^{\ast}$. We refer to the cases $s_{c}=0$ and $0<s_{c}<1$ as mass critical regime and mass supercritical regime, respectively. 

If $u(t)$ is the corresponding solution of the Cauchy problem \eqref{GP} with $u(0)=u_{0}$, we set 
\begin{equation}\label{ml}
l:=\inf_{t\in [0, T_{+})}  l_{\Omega}(u(t))\in [-\infty, \infty).
\end{equation}
\begin{remark}\label{Roq}
(i) As mentioned above, if the trapping frequencies are equal in each spatial direction, then we have that $l_{\Omega}(u(t))=l_{\Omega}(u_{0})$ for every $t\in [0,T_{+})$. This implies that in this case $l=l_{\Omega}(u_{0})\in \mathbb{R}$.\\
(ii) Notice that if  the nonlinearity is $L^{2}$-subcritical ($p<1+4/N$), then $l \in  \mathbb{R}$. Indeed, by \cite[Theorem 2.1]{AEWP} we see that if $u(t)$ is the solution of \eqref{GP}, $u(t)$ exists globally and  there exits $C>0$ such that $\|xu(t)\|^{2}_{L^{2}}+\|\nabla u(t)\|^{2}_{L^{2}}\leq C$ for all $t\in \mathbb{R}$. This implies by inequality \eqref{Emii} below that $|l_{\Omega}(u(t))|$ is uniformly bounded. Therefore, $l \in  \mathbb{R}$.\\
(iii) In the $L^{2}$-supercritical case ($p>1+4/N$), if $|\Omega|<\gamma$ and $\|u_{0}\|_{\Sigma}$ is small enough, a standard argument shows that there exists $C>0$ such that $\|u(t)\|_{\Sigma}\leq C$ for every $t$ in the interval of existence. Thus, we can apply the local theory to extend the solution such that  $\|u(t)\|_{\Sigma}\leq C$ for every $t\in \mathbb{R}$. Again,  by inequality \eqref{Emii} below we infer that $l \in  \mathbb{R}$.
\end{remark}

For $p>1+4/N$ (i.e. $0< s_{c}<1$) and $u_{0}\in \Sigma$, if $l \in  \mathbb{R}$ and $E_{\Omega}(u_{0})\geq l$, we define the following subsets in $\Sigma$, 
\begin{gather*}
\mathcal{K}^{+}=\bigl\{u_{0}\in \Sigma:\left(E_{\Omega}(u_{0})-l\right)^{s_{c}}M(u_{0})^{1-s_{c}}<E_{0,0}(Q)^{s_{c}}M(Q)^{1-s_{c}}\\
\|\nabla u_{0}\|^{s_{c}}_{2}\|u_{0}\|^{1-s_{c}}_{2}<\|\nabla Q\|^{s_{c}}_{2}\| Q\|^{1-s_{c}}_{2}\bigl\},
\end{gather*}
and
\begin{gather*}
 \mathcal{K}^{-}=\bigl\{u_{0}\in \Sigma: \left(E_{\Omega}(u_{0})-l\right)^{s_{c}}M(u_{0})^{1-s_{c}}<E_{0,0}(Q)^{s_{c}}M(Q)^{1-s_{c}}\\
 \|\nabla u_{0}\|^{s_{c}}_{2}\| u_{0}\|^{1-s_{c}}_{2}>\|\nabla Q\|^{s_{c}}_{2}\| Q\|^{1-s_{c}}_{2}\bigl\},
\end{gather*}
where $Q$ denotes the unique positive and radially symmetric solution of  \eqref{Izx} and $E_{0,0}(Q)=\frac{1}{2}\|\nabla Q\|^{2}_{2}-\frac{2}{p+1}\|Q\|^{p+1}_{p+1}$. Notice that $\mathcal{K}^{\pm}\neq \emptyset$ (see Remark \ref{empty} below).

In our first result, we obtain a criteria between blow-up and global existence for \eqref{GP} in terms of the energy, mass and $l$ given by
\eqref{ml}.
\begin{theorem}\label{SC}
Let $1+\frac{4}{N}<p<2^{\ast}$ $(i.e., 0<s_{c}<1)$, $u_{0}\in \Sigma$ and let $u\in C([0, T_{+}), \Sigma)$ be the corresponding solution of \eqref{GP} with initial data $u_{0}$. \\
(i) If $l=-\infty$, then there exists a sequence of times $\left\{t_{n}\right\}$  such that $t_{n}\rightarrow T_{+}$ and  $\lim_{t_{n}\rightarrow T_{+}}\|\nabla u(t_{n})\|^{2}_{L^{2}}=\infty$.\\
(ii) Assume that $l\in \mathbb{R}$ and $E_{\Omega}(u_{0})\geq l$. Then one of the following two cases holds:
\begin{enumerate}
\item If $u_{0}\in \mathcal{K}^{+}$, then  the corresponding solution $u(t)$ exists globally.
\item If  $u_{0}\in \mathcal{K}^{-}$, the  solution blows-up in finite time.
\end{enumerate}
Moreover, the sets $\mathcal{K}^{\pm}$ are invariant by the flow of the equation \eqref{GP}.\\
(iii) Assume that $l\in\mathbb{R}$ and $E_{\Omega}(u_{0})<l$. Then the solution $u(t)$ blows up at finite time in $\Sigma$. In addition, for every $t$ in the existence time we have 
\begin{equation*}
\|\nabla u(t)\|_{2}\geq \left(\frac{(p-1)N}{4}\right)^{\frac{1}{s_{c}(p-1)}}\left(\frac{\| Q\|_{2}}{\| u_{0}\|_{2}}\right)^{\frac{1-s_{c}}{s_{c}}}\|\nabla Q\|_{2}.
\end{equation*}
\end{theorem}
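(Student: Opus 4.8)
The engine for all three parts is a single consequence of energy conservation \eqref{Cls}. Since $l_{\Omega}(u(t))\ge l$ for every $t$ by \eqref{ml} and $V\ge 0$, writing out $E_{\Omega}(u(t))=E_{\Omega}(u_{0})$ and isolating the rotation-free energy gives, for all $t\in[0,T_{+})$,
\[
E_{0,0}(u(t)):=\tfrac12\|\nabla u(t)\|_{2}^{2}-\tfrac{2}{p+1}\|u(t)\|_{p+1}^{p+1}=E_{\Omega}(u_{0})-\tfrac12\!\int_{\mathbb{R}^{N}}\!V|u(t)|^{2}-l_{\Omega}(u(t))\le E_{\Omega}(u_{0})-l .
\]
Part~(i) is immediate from this: if $l=-\infty$, pick $t_{n}\to T_{+}$ with $l_{\Omega}(u(t_{n}))\to-\infty$; the same identity (before taking the infimum) gives $E_{0,0}(u(t_{n}))\le E_{\Omega}(u_{0})-l_{\Omega}(u(t_{n}))\to+\infty$, and since $E_{0,0}(v)\le\tfrac12\|\nabla v\|_{2}^{2}$ this forces $\|\nabla u(t_{n})\|_{2}^{2}\to\infty$. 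No bound on the potential or on $l_{\Omega}$ is used here.

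For the remaining parts I would work with the variance $V(t):=\int_{\mathbb{R}^{N}}|x|^{2}|u(t)|^{2}\,dx$, finite because $u(t)\in\Sigma$, and establish the virial identity
\[
\frac{d^{2}}{dt^{2}}V(t)=2\|\nabla u\|_{2}^{2}-\frac{2N(p-1)}{p+1}\|u\|_{p+1}^{p+1}-2\!\int_{\mathbb{R}^{N}}\!V(x)|u|^{2}\,dx .
\]
At first order $V'(t)=2\,\IM\int_{\mathbb{R}^{N}}\bar u\,x\cdot\nabla u$, which already ignores the potential, the nonlinearity and the rotation. At second order the rotation again drops out, because the dilation generator $x\cdot\nabla$ commutes with $L_{\Omega}$ (equivalently $[L_{\Omega},|x|^{2}]=0=[L_{\Omega},x\cdot\nabla]$), while the harmonic trap contributes $-\int(x\cdot\nabla V)|u|^{2}=-2\int V|u|^{2}$ via Euler's relation $x\cdot\nabla V=2V$. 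Since $0<s_{c}<1$ forces $N(p-1)>4$, inserting $\tfrac{2}{p+1}\|u\|_{p+1}^{p+1}\ge\tfrac12\|\nabla u\|_{2}^{2}-(E_{\Omega}(u_{0})-l)$ from the first display and discarding $-2\int V|u|^{2}\le0$ yields the key estimate
\[
\frac{d^{2}}{dt^{2}}V(t)\le N(p-1)\,(E_{\Omega}(u_{0})-l)-\frac{N(p-1)-4}{2}\,\|\nabla u(t)\|_{2}^{2}.
\]

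Part~(iii) now follows at once: when $E_{\Omega}(u_{0})<l$ the right-hand side is $\le N(p-1)(E_{\Omega}(u_{0})-l)<0$ uniformly, so the nonnegative function $V$ has strictly negative concavity and must reach $0$ in finite time, whence $T_{+}<\infty$ by the blow-up alternative of the local theory. The quantitative lower bound is independent of the virial: the first display also gives $E_{0,0}(u(t))<0$, and substituting this into the sharp Gagliardo--Nirenberg inequality, whose optimal constant is read off from the Pohozaev/Nehari identities for \eqref{Izx}, produces exactly the stated inequality (the exponent $\tfrac{1}{s_{c}(p-1)}=\tfrac{2}{N(p-1)-4}$ is the signature of this step). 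For part~(ii), where $E_{\Omega}(u_{0})-l\ge0$, I would run the Holmer--Roudenko dichotomy with the effective level $E_{\Omega}(u_{0})-l$: sharp Gagliardo--Nirenberg turns the conserved data into a function of $w(t)=\|\nabla u(t)\|_{2}^{s_{c}}\|u(t)\|_{2}^{1-s_{c}}/(\|\nabla Q\|_{2}^{s_{c}}\|Q\|_{2}^{1-s_{c}})$ with a strict maximum at $w=1$ equal to the threshold $E_{0,0}(Q)^{s_{c}}M(Q)^{1-s_{c}}$, so the strict threshold inequality defining $\mathcal{K}^{\pm}$ together with continuity of $t\mapsto\|\nabla u(t)\|_{2}$ prevents $w(t)$ from ever reaching $1$. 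Hence $w(t)<1$ throughout if $u_{0}\in\mathcal{K}^{+}$ (bounded gradient, so $T_{+}=\infty$), and $w(t)>1$ throughout if $u_{0}\in\mathcal{K}^{-}$; this trapping, combined with \eqref{Cls}, is precisely the flow-invariance of $\mathcal{K}^{\pm}$. On $\mathcal{K}^{-}$ one has $\|\nabla u(t)\|_{2}>K_{0}:=\|\nabla Q\|_{2}(\|Q\|_{2}/\|u_{0}\|_{2})^{(1-s_{c})/s_{c}}$, and the threshold inequality is exactly equivalent to $N(p-1)(E_{\Omega}(u_{0})-l)<\tfrac{N(p-1)-4}{2}K_{0}^{2}$; feeding this into the key estimate gives $V''(t)\le-\delta<0$ for a fixed $\delta$, hence blow-up as in part~(iii).

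The two genuinely delicate points are the rigorous virial computation in the anisotropic rotating setting---the vanishing of the rotation contribution and the correct sign $-2\int V|u|^{2}$ of the trap, together with the approximation argument legitimizing the two time-differentiations for $\Sigma$-valued solutions---and the algebraic verification that the scale-invariant threshold defining $\mathcal{K}^{-}$ is \emph{exactly} the inequality $N(p-1)(E_{\Omega}(u_{0})-l)<\tfrac{N(p-1)-4}{2}K_{0}^{2}$ needed to close the blow-up estimate. Once these are in hand, everything reduces to the elementary bound $E_{0,0}(u(t))\le E_{\Omega}(u_{0})-l$.
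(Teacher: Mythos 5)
Your parts (ii) and (iii) are essentially correct and follow the same Holmer--Roudenko route as the paper: your key estimate $J''(t)\le N(p-1)\left(E_{\Omega}(u_{0})-l\right)-\frac{N(p-1)-4}{2}\|\nabla u(t)\|_{2}^{2}$ is exactly the paper's identity \eqref{VFM} after discarding the potential term; the trapping of $\|\nabla u(t)\|_{2}$ by the strict inequality $f<f(x_{1})$ for the Gagliardo--Nirenberg function is the content of Lemma \ref{lemaf}; and your algebraic claim that the threshold inequality defining $\mathcal{K}^{-}$ is equivalent to $N(p-1)\left(E_{\Omega}(u_{0})-l\right)<\frac{N(p-1)-4}{2}K_{0}^{2}$ checks out, since the normalized threshold equals $\frac{s_{c}}{N}K_{0}^{2}$ and $\frac{s_{c}}{N}=\frac{N(p-1)-4}{2N(p-1)}$. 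Your way of extracting a uniform $J''\le-\delta$ directly from the strict bound $\|\nabla u(t)\|_{2}>K_{0}$ is in fact slightly cleaner than the paper's two-parameter $\delta_{1},\delta_{2}$ bookkeeping.

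Part (i), however, has a genuine gap. From $E_{0,0}(u(t_{n}))=E_{\Omega}(u_{0})-\frac{1}{2}\int_{\mathbb{R}^{N}}V(x)|u(t_{n})|^{2}dx-l_{\Omega}(u(t_{n}))$ and $V\ge 0$ you get only the \emph{upper} bound $E_{0,0}(u(t_{n}))\le E_{\Omega}(u_{0})-l_{\Omega}(u(t_{n}))$; the right-hand side tending to $+\infty$ tells you nothing about $E_{0,0}(u(t_{n}))$, and chaining it with the further upper bound $E_{0,0}(v)\le\frac{1}{2}\|\nabla v\|_{2}^{2}$ is a non sequitur. Concretely, nothing in your argument excludes the scenario where $\|\nabla u(t_{n})\|_{2}$ stays bounded while $\|xu(t_{n})\|_{2}^{2}\to\infty$ and the trap term $\frac{1}{2}\int V|u(t_{n})|^{2}$ absorbs all of $-l_{\Omega}(u(t_{n}))$. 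Ruling this out is precisely where a bound relating $l_{\Omega}$ to the potential is required --- the step you explicitly disclaim as unnecessary. The paper closes this as follows: if $\|\nabla u(t_{n})\|_{2}\le C$, then Gagliardo--Nirenberg bounds $\|u(t_{n})\|_{p+1}^{p+1}$, and energy conservation combined with \eqref{Emii}, i.e.\ $|l_{\Omega}(\psi)|\le\frac{a}{2}\|\nabla\psi\|_{2}^{2}+\frac{|\Omega|^{2}}{2a}\|x\psi\|_{2}^{2}$ with $a$ chosen so large that $\gamma^{2}-|\Omega|^{2}/a>0$, yields $\left(\frac{\gamma^{2}}{2}-\frac{|\Omega|^{2}}{2a}\right)\|xu(t_{n})\|_{2}^{2}\le E_{\Omega}(u_{0})+C'$; hence $\|xu(t_{n})\|_{2}$ is bounded, so $l_{\Omega}(u(t_{n}))$ is bounded, contradicting $l=-\infty$. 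You need to insert this contradiction argument (or an equivalent one) to make part (i) valid.
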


For the standard Schr\"odinger equation, the sharp thresholds of global existence and  blow-up have  been extensively studied during the past decades (see \cite{DR5,HR1,CB} and references therein). To prove the Theorem \ref{SC} we follow the arguments developed in
Holmer and Roudenko  \cite{HR1, HR23}, where they proved similar results for the $L^{2}$-supercritical NLS with zero potential.  

\begin{remark}\label{empty}
(i) The set $\mathcal{K}^{+}$ is not empty for $|\Omega|<\gamma$. Indeed, if $\|u_{0}\|_{\Sigma}$ is small enough, by  Remark \ref{Roq} (iii) we have that $l \in  \mathbb{R}$. Moreover, by the energy conservation and the  Gagliardo-Nirenberg inequality (see \eqref{GI}) we see that
\[ E(u_{0})-l_{\Omega}(u(t)) \geq X(t)-CX(t)^{N(p-1)/4},\]
where $C>0$ and $X(t)=\frac{1}{2}\|\nabla u(t)\|^{2}_{2}+\frac{1}{2}\int_{\mathbb{R}^{N}}V(x)|u(x,t)|^{2}dx$. Since  $p>1+4/N$, taking $\|u_{0}\|_{\Sigma}$ 
is small enough we infer that $E(u_{0})-l_{\Omega}(u(t))\geq 0$. This implies that  $E(u_{0})-l\geq 0$. In conclusion, there exists $\epsilon>0$
such that if  $\|u_{0}\|_{\Sigma}<\epsilon$, then $u_{0}\in \mathcal{K}^{+}$.\\
(ii)We can extend the Theorem \ref{SC} to the case of potentials $V\in C^{\infty}(\mathbb{R}^{N})$  such that $V\geq 0$ and $\partial^{\alpha}V\in L^{\infty}(\mathbb{R}^{N})$ for all multi-indices $\alpha\in \mathbb{N}^{N}$ with $|\alpha|\leq 2$. Indeed, the proof of Theorem \ref{SC} works after obvious modifications. Notice also that in this case if $(\Omega\cdot L)V(x)\geq 0$ (see \eqref{Amf}), then  we have that $l_{\Omega}(u(t))\geq l_{\Omega}(u_{0})$, for all $t\in [0, T_{+})$, i.e., 
$l=l_{\Omega}(u_{0})$. As a consequence of this fact, we see that if $E_{\Omega}(u_{0})-l_{\Omega}(u_{0})$ is small enough and $\|u_{0}\|^{2}_{L^{2}}$ is sufficiently large, then $u_{0}\in \mathcal{K}^{-}$. Similarly, if $\|u_{0}\|^{2}_{L^{2}}$  is small enough then $u_{0}\in \mathcal{K}^{+}$.
\end{remark}

\begin{remark}
(i) Notice that if $l=-\infty$, then by Theorem \ref{SC} one of the following two statements is true:
\begin{enumerate}
\item The solution blows-up in finite time, i.e, $T_{+}<\infty$ and $\lim_{t\rightarrow T_{+}}\|\nabla u(t)\|^{2}_{{2}}=\infty$.
\item The solution grows-up in time, i.e, $T_{+}=\infty$  and there exists a sequence $t_{n}\rightarrow\infty$ such that $\lim_{n\rightarrow \infty}\|\nabla u(t_{n})\|^{2}_{2}=\infty$.
\end{enumerate}
(ii) We observe that in the mass supercritical regime  $1+\frac{4}{N}< p<2^{\ast}$, if $E_{\Omega}(u_{0})\geq l$, then the condition $\|\nabla u_{0}\|^{s_{c}}_{2}\| u_{0}\|^{1-s_{c}}_{2}<\|\nabla Q\|^{s_{c}}_{2}\| Q\|^{1-s_{c}}_{2}$ is sharp for global existence except for the threshold level $\|\nabla u_{0}\|^{s_{c}}_{2}\| u_{0}\|^{1-s_{c}}_{2}=\|\nabla Q\|^{s_{c}}_{2}\| Q\|^{1-s_{c}}_{2}$.
\end{remark}

In the second part of this paper, we study the stability and instability of standing waves.  Throughout this paper, we call a standing wave a solution of \eqref{GP} with the form $u(x,t)=e^{\frac{\omega}{2}it}\varphi_{\omega}(x)$, where $\omega\in \mathbb{R}$ is a frequency and $\varphi_{\omega}$ satisfying the following nonlinear elliptic problem
\begin{equation}\label{EE}
 \begin{cases} 
 -\Delta \varphi+\omega\varphi+V(x)\varphi-2|\varphi|^{p-1}\varphi+2 L_{\Omega}\varphi=0,\\
\varphi\in \Sigma\setminus \left\{0\right\}.
\end{cases} 
\end{equation}
For $\gamma=\min_{1\leq j\leq N}\left\{\gamma_{j}\right\}>0$, it is well known that operator $R_{ \Omega}$ has a purely discrete spectrum (see \cite[Theorem 2.2]{MATSUNU} for more details). Thus, we define
\begin{equation}\label{impr}
\lambda_{0}:=-\inf\left\{\|\nabla u\|_{2}^{2}+\int_{\mathbb{R}^{N}}V(x)|u(x)|^{2}dx+2l_{\Omega}(u): u\in \Sigma, \| u\|_{L^{2}}^{2}=1 \right\}.
\end{equation}
Moreover, we define the following functionals of class $C^{2}$:
\begin{align*}
 S_{\omega}(u)&=\frac{1}{2}\mathfrak{t}[u]+\frac{\omega}{2}\int_{\mathbb{R}^{N}}|u|^{2}dx-\frac{2}{p+1}\int_{\mathbb{R}^{N}}|u|^{p+1}dx,\\
 I_{\omega}(u)&=\mathfrak{t}[u]+{\omega}\int_{\mathbb{R}^{N}}|u|^{2}dx-{2}\int_{\mathbb{R}^{N}}|u|^{p+1}dx,\\
P(u)&=\frac{1}{2}\int_{\mathbb{R}^{N}}|\nabla u|^{2}dx-\frac{1}{2}\int_{\mathbb{R}^{N}}V(x)|u(x)|^{2}dx-\frac{N(p-1)}{2(p+1)}\int_{\mathbb{R}^{N}}|u|^{p+1}dx.
\end{align*}
%%%%%%%%%%%%%%%%%%%%%%%%%%%%%%%%%%%%%%%%%%%%%%%%%%%%%%%%%%%%%%%%%%%%%%%%%%%%%%%%%%%%%%%%%%%%%%%%%%%%%%%
We observed that  the elliptic equation \eqref{EE} can be written as $S^{\prime}(\varphi)=0$. Now, for $\omega>\lambda_{0}$, we denote the set of non-trivial solutions of \eqref{EE} by
\begin{equation*}
 \mathcal{A}_{\omega}=\bigl\{ \varphi\in \Sigma\setminus  \left\{0 \right\}: S^{\prime}_{\omega}(\varphi)=0\bigl\}.
\end{equation*}
A ground states for \eqref{EE} is a function $\phi\in \mathcal{A}_{\omega}$  that minimizes $S_{\omega}$ over the set $\mathcal{A}_{\omega}$. The set of ground states is denoted by $\mathcal{G}_{\omega}$ and 
\begin{equation*}
 \mathcal{G}_{\omega}=\bigl\{ \varphi\in  \mathcal{A}_{\omega}: S_{\omega}(\varphi)\leq S_{\omega}(v)\quad \text{for all $v\in \mathcal{A}_{\omega}$}\bigl\}.
\end{equation*}
In the following result, we prove that the set of ground states $\mathcal{G}_{\omega}$ is not empty.
\begin{proposition} \label{GNe}
Let  $|\Omega|<\gamma$, $\omega>\lambda_{0}$ and $1<p<2^{\ast}$. Then the set of ground states $\mathcal{G}_{\omega}$ is not empty. Moreover, we have the following variational characterization
\begin{equation*}
 \mathcal{G}_{\omega}=\bigl\{ \varphi\in \Sigma: S_{\omega}(\varphi)=d(\omega)\quad \text{and}\quad I_{\omega}(u)=0\bigl\},
\end{equation*}
where
\begin{equation*}
d(\omega)={\inf}\left\{S_{\omega}(u):\, u\in \Sigma \setminus  \left\{0 \right\},  I_{\omega}(u)=0\right\}.
\end{equation*}
\end{proposition}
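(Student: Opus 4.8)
The plan is to realize the ground states as minimizers of $S_\omega$ on the Nehari set $\mathcal{N}=\{u\in\Sigma\setminus\{0\}:I_\omega(u)=0\}$, i.e.\ to solve the constrained problem defining $d(\omega)$, and then to promote a minimizer to a critical point of $S_\omega$. The three analytic ingredients I would isolate first are: (a) \emph{coercivity}, namely that for $|\Omega|<\gamma$ and $\omega>\lambda_0$ the quadratic form $u\mapsto\mathfrak{t}[u]+\omega\|u\|_2^2$ is positive and equivalent to $\|u\|_\Sigma^2$ (this is where the hypotheses enter, via \eqref{Emii}); (b) \emph{weak lower semicontinuity} of this form along sequences converging weakly in $\Sigma$; and (c) the \emph{compactness} of the embedding $\Sigma\hookrightarrow L^{p+1}(\R^N)$ for $1<p<2^\ast$, which holds because the harmonic confinement makes the mass tight at infinity. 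A useful preliminary computation is that on $\mathcal{N}$ one has $\mathfrak{t}[u]+\omega\|u\|_2^2=2\|u\|_{p+1}^{p+1}$ and hence $S_\omega(u)=\frac{p-1}{2(p+1)}\bigl(\mathfrak{t}[u]+\omega\|u\|_2^2\bigr)$, so that minimizing $S_\omega$ on $\mathcal{N}$ is the same as minimizing the (coercive, w.l.s.c.) quadratic form; together with the Gagliardo--Nirenberg inequality \eqref{GI} this also gives $d(\omega)>0$ and a positive lower bound for $\|u\|_{p+1}$ on $\mathcal{N}$.

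For existence of a minimizer I would take a sequence $\{u_n\}\subset\mathcal{N}$ with $S_\omega(u_n)\to d(\omega)$. By (a) and the identity above, $\{u_n\}$ is bounded in $\Sigma$, so after extraction $u_n\rightharpoonup u$ in $\Sigma$ and, by (c), $u_n\to u$ in $L^{p+1}$ and in $L^2$. The lower bound on $\|u_n\|_{p+1}$ forces $\|u\|_{p+1}>0$, hence $u\neq0$. Passing to the limit, (b) gives $\mathfrak{t}[u]+\omega\|u\|_2^2\le\liminf\bigl(\mathfrak{t}[u_n]+\omega\|u_n\|_2^2\bigr)=2\lim\|u_n\|_{p+1}^{p+1}=2\|u\|_{p+1}^{p+1}$, i.e.\ $I_\omega(u)\le0$. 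To exclude $I_\omega(u)<0$ I would use the scaling $t\mapsto tu$: since $I_\omega(tu)=t^2(\mathfrak{t}[u]+\omega\|u\|_2^2)-2t^{p+1}\|u\|_{p+1}^{p+1}$ is positive for small $t$ and negative at $t=1$, there is $t_0\in(0,1)$ with $t_0u\in\mathcal{N}$; then $S_\omega(t_0u)=\frac{p-1}{2(p+1)}t_0^2(\mathfrak{t}[u]+\omega\|u\|_2^2)<\frac{p-1}{2(p+1)}(\mathfrak{t}[u]+\omega\|u\|_2^2)\le\liminf S_\omega(u_n)=d(\omega)$, contradicting the definition of $d(\omega)$. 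Hence $I_\omega(u)=0$, and then w.l.s.c.\ yields $S_\omega(u)\le d(\omega)$, so $S_\omega(u)=d(\omega)$ and $u$ is a minimizer.

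It remains to identify minimizers with ground states. Given a minimizer $\varphi$, the Lagrange multiplier rule provides $\mu\in\R$ with $S_\omega'(\varphi)=\mu I_\omega'(\varphi)$. Testing against $\varphi$ and using $\langle S_\omega'(\varphi),\varphi\rangle=I_\omega(\varphi)=0$ together with the computation $\langle I_\omega'(\varphi),\varphi\rangle=2\bigl(\mathfrak{t}[\varphi]+\omega\|\varphi\|_2^2\bigr)-2(p+1)\|\varphi\|_{p+1}^{p+1}=-2(p-1)\|\varphi\|_{p+1}^{p+1}\neq0$ forces $\mu=0$, so $S_\omega'(\varphi)=0$ and $\varphi\in\mathcal{A}_\omega$. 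Conversely every $v\in\mathcal{A}_\omega$ satisfies $I_\omega(v)=\langle S_\omega'(v),v\rangle=0$, so $\mathcal{A}_\omega\subset\mathcal{N}$ and therefore $S_\omega(v)\ge d(\omega)=S_\omega(\varphi)$; thus $\varphi$ minimizes $S_\omega$ over $\mathcal{A}_\omega$, which shows $\mathcal{G}_\omega\neq\emptyset$ and that any minimizer lies in $\mathcal{G}_\omega$. For the stated characterization, the inclusion $\{S_\omega=d(\omega),\,I_\omega=0\}\subset\mathcal{G}_\omega$ is the previous sentence; for the reverse inclusion, any $\phi\in\mathcal{G}_\omega$ lies in $\mathcal{A}_\omega\subset\mathcal{N}$, whence $S_\omega(\phi)\ge d(\omega)$, while minimality of the ground state $\phi$ tested against $\varphi$ gives $S_\omega(\phi)\le S_\omega(\varphi)=d(\omega)$, so $S_\omega(\phi)=d(\omega)$ and $I_\omega(\phi)=0$.

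The main obstacle is point (b): because the angular momentum $l_\Omega(u)=\langle L_\Omega u,u\rangle$ is not sign-definite and, unlike the mass, does not appear to be weakly continuous on $\Sigma$ (the natural estimate costs a factor $|x|^2$, which the weak convergence $x_ju_n\rightharpoonup x_ju$ does not control), one cannot treat the cross term in isolation. The device I would use is the magnetic (gauge) reformulation $\|\nabla u\|_2^2+2l_\Omega(u)=\|(\nabla-iA)u\|_2^2-\|Au\|_2^2$ with $A(x)=|\Omega|(x_2,-x_1,0)$ (so that $|A(x)|^2=|\Omega|^2(x_1^2+x_2^2)$), which yields $\mathfrak{t}[u]+\omega\|u\|_2^2=\|(\nabla-iA)u\|_2^2+\int_{\R^N}\bigl(V(x)-|A(x)|^2\bigr)|u|^2\,dx+\omega\|u\|_2^2$. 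Here $V(x)-|A(x)|^2=\sum_j\gamma_j^2x_j^2-|\Omega|^2(x_1^2+x_2^2)\ge(\gamma^2-|\Omega|^2)(x_1^2+x_2^2)+\gamma_3^2x_3^2\ge0$ precisely because $|\Omega|<\gamma$, which gives both the coercivity in (a) and, since $u\mapsto\|(\nabla-iA)u\|_2^2$ is convex with $(\nabla-iA)u_n\rightharpoonup(\nabla-iA)u$ in $L^2$, the weak lower semicontinuity in (b) for the full quadratic form; thus no separate (and seemingly unavailable) weak-continuity statement for $l_\Omega$ is needed. I expect verifying the compact embedding (c) and this coercivity/lower-semicontinuity package to be the only genuinely non-routine parts, the rest being the standard Nehari--Lagrange argument above.
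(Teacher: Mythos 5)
Your proposal is correct and follows essentially the same route as the paper (Lemmas \ref{v11}--\ref{v33}): minimize $S_\omega$ on the Nehari set using the norm equivalence of Lemma \ref{EEn}, the compact embedding $\Sigma\hookrightarrow L^{p+1}$, the rescaling $t\mapsto tu$ to rule out $I_\omega(u)<0$, and the Lagrange multiplier computation $\langle I_\omega'(\varphi),\varphi\rangle=-2(p-1)\|\varphi\|_{p+1}^{p+1}<0$ to identify minimizers with ground states. The only real difference is your magnetic-gauge identity $\|\nabla u\|_2^2+2l_\Omega(u)=\|(\nabla-iA)u\|_2^2-\|Au\|_2^2$ used to justify the weak lower semicontinuity \eqref{sli}, which the paper asserts without proof; this is a valid (and more explicit) justification, though it also follows directly from Lemma \ref{EEn}, since $\mathfrak{t}[u]+\omega\|u\|_2^2$ is the square of a Hilbert norm equivalent to $\|\cdot\|_\Sigma$ and is therefore automatically weakly lower semicontinuous.
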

Next we need the following definition.
\begin{definition} 
 We say that the set $\mathcal{M}\subset \Sigma$ is $\Sigma$-stable under the flow generated by \eqref{GP} if, for $\epsilon>0$, there exists $\eta>0$ such that for any initial data $u_{0}$ satisfying
\begin{equation*}
\inf_{v\in \mathcal{M}}\|u_{0}-v\|_{\Sigma}<\delta,
\end{equation*}
then the corresponding solution $u(t)$ of \eqref{GP} with $u(0)=u_{0}$ exists for all $t\in \mathbb{R}$ and satisfies
\begin{equation*}
\inf_{v\in \mathcal{M}}\|u(t)-v\|_{\Sigma}<\epsilon.
\end{equation*}
Otherwise, $\mathcal{M}$ is said to be unstable. We say that the standing wave $u(x,t)=e^{\frac{\omega}{2}it}\varphi_{\omega}(x)$ of \eqref{GP} is stable in $\Sigma$ if $\mathcal{O}_{\omega}$ is stable and $u(x,t)=e^{\frac{\omega}{2}it}\varphi_{\omega}(x)$ is unstable if $\mathcal{O}_{\omega}$ is unstable, where $\mathcal{O}_{\omega}=\left\{e^{i\theta}\varphi_{\omega}: \theta\in \mathbb{R}\right\}$.
\end{definition}

Following the argument by Fukuizumi and Ohta \cite{FOIB2003},  we can show a sufficient condition for the instability of standing waves in the mass supercritical regime.
\begin{theorem}\label{Ax1}
Let  $|\Omega|<\gamma$, $\omega>\lambda_{0}$, $1+\frac{4}{N}<p<2^{\ast}$ and $\phi_{\omega}\in \mathcal{G}_{\omega}$. 
Assume that $\partial^{2}_{s}E_{\Omega}(\phi^{s}_{\omega})|_{s=1}<0$, where $\phi^{s}_{\omega}(x)=s^{\frac{N}{2}}\phi_{\omega}(sx)$.
Then the standing wave $e^{\frac{\omega}{2}i t}\phi_{\omega}(x)$ of \eqref{GP} is  unstable in $\Sigma$.
\end{theorem}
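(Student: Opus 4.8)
The plan is to implement the scaling/convexity scheme of Fukuizumi--Ohta: to produce, arbitrarily $\Sigma$-close to $\phi_{\omega}$, initial data whose solutions blow up in finite time, contradicting the $\Sigma$-stability of the orbit $\mathcal{O}_{\omega}$. Write $\phi^{s}_{\omega}(x)=s^{N/2}\phi_{\omega}(sx)$. Because dilations commute with rotations and $V$ is a sum of terms homogeneous of degree two, this scaling fixes the mass $M(\phi^{s}_{\omega})=M(\phi_{\omega})$ and the angular momentum $l_{\Omega}(\phi^{s}_{\omega})=l_{\Omega}(\phi_{\omega})$, while $\int_{\mathbb{R}^{N}}V|\phi^{s}_{\omega}|^{2}=s^{-2}\int_{\mathbb{R}^{N}}V|\phi_{\omega}|^{2}$. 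A direct differentiation then gives $\partial_{s}E_{\Omega}(\phi^{s}_{\omega})=\tfrac{2}{s}P(\phi^{s}_{\omega})$, and testing the Euler--Lagrange equation $S_{\omega}'(\phi_{\omega})=0$ against the infinitesimal generator $\tfrac{N}{2}\phi_{\omega}+x\cdot\nabla\phi_{\omega}$ of the scaling yields the Pohozaev identity $P(\phi_{\omega})=0$. Thus $s=1$ is a critical point of $s\mapsto E_{\Omega}(\phi^{s}_{\omega})$, and the hypothesis $\partial^{2}_{s}E_{\Omega}(\phi^{s}_{\omega})|_{s=1}<0$ makes it a strict local maximum. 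Consequently, for $s>1$ close to $1$ one has simultaneously $P(\phi^{s}_{\omega})<0$ and, since $S_{\omega}=E_{\Omega}+\tfrac{\omega}{2}M$ with $M$ constant along the scaling, $S_{\omega}(\phi^{s}_{\omega})<S_{\omega}(\phi_{\omega})=d(\omega)$, the last equality coming from Proposition \ref{GNe}. As $\phi^{s}_{\omega}\to\phi_{\omega}$ in $\Sigma$ when $s\to1$, it suffices to prove that data of this type generate finite-time blow-up.

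For the blow-up I would use the virial quantity $J(t)=\int_{\mathbb{R}^{N}}|x|^{2}|u(x,t)|^{2}\,dx$, which is finite and $C^{2}$ on $[0,T_{+})$ for $\Sigma$-solutions. The crucial point is that both the rotation and the anisotropic trap are compatible with $P$. Since $L_{\Omega}$ annihilates $|x|^{2}$ and commutes with the dilation field $x\cdot\nabla$ (rotations commute with dilations), the rotation contributes nothing to the first two time derivatives of $J$; and since $x\cdot\nabla V=2V$ the trap enters exactly through the term $-\tfrac12\int_{\mathbb{R}^{N}}V|u|^{2}$ already present in $P$. The double-commutator computation therefore collapses to the clean identity $J''(t)=4P(u(t))$ (the factor being the one dictated by the $\tfrac12\Delta$ normalisation), just as in the potential- and rotation-free case. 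I would make this rigorous in $\Sigma$ by the standard truncation of $|x|^{2}$ together with the conservation laws \eqref{Cls}.

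It then remains to show that $P(u(t))$ is bounded above by a fixed negative constant along the evolution of $u_{0}=\phi^{s}_{\omega}$. Here I would introduce $\mathcal{B}=\{u\in\Sigma\setminus\{0\}: S_{\omega}(u)<d(\omega),\ P(u)<0\}$ and prove it invariant: $S_{\omega}=E_{\Omega}+\tfrac{\omega}{2}M$ is conserved, so $S_{\omega}(u(t))<d(\omega)$ for all $t$, and if $P(u(t_{0}))=0$ for some $t_{0}$ then the variational characterization $d(\omega)=\inf\{S_{\omega}(v): v\neq0,\ P(v)=0\}$ (with $\phi_{\omega}$ as a minimizer) would force $S_{\omega}(u(t_{0}))\geq d(\omega)$, a contradiction. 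On $\mathcal{B}$ one upgrades this to the quantitative bound $P(u(t))\leq S_{\omega}(u(t))-d(\omega)=S_{\omega}(u_{0})-d(\omega)=:-c<0$ by scaling $u(t)$ back onto $\{P=0\}$. Feeding this into the virial gives $J''(t)\leq-4c$, so $J$ would become negative in finite time; since $J\geq0$, this forces $T_{+}<\infty$, i.e. blow-up, contradicting stability.

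The \emph{main obstacle} is the variational input of the previous paragraph, which is genuinely more delicate here than in the unperturbed problem. Two features must be reconciled. First, the harmonic trap changes the scaling geometry: the map $\lambda\mapsto P(v^{\lambda})$ is negative both as $\lambda\to0^{+}$ (through the $s^{-2}\int_{\mathbb{R}^{N}}V|v|^{2}$ contribution) and as $\lambda\to\infty$, so the one-sided sign analysis that makes $d(\omega)=\inf_{\{P=0\}}S_{\omega}$ and the back-projection onto $\{P=0\}$ transparent in the free case must be redone and matched against the Nehari characterization of Proposition \ref{GNe}. Second, because the trap is anisotropic the angular momentum $l_{\Omega}(u(t))$ is not conserved (see \eqref{Amf}), so establishing both the invariance of $\mathcal{B}$ and the uniform bound $P(u(t))\le-c$ relies on the coercivity furnished by $|\Omega|<\gamma$ to control $l_{\Omega}(u(t))$ and keep $S_{\omega}$ bounded below on the constraint. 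Verifying that $\phi^{s}_{\omega}$ lands strictly inside $\mathcal{B}$ (rather than on its boundary) and that these variational identities survive the rotation is where the real work lies; the scaling reduction and the virial identity are comparatively routine once $L_{\Omega}$ is seen to be inert in the virial.
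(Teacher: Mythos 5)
Your overall architecture is the right one and is exactly what the paper intends: the paper gives no proof of Theorem \ref{Ax1} beyond the remark that it follows from the argument of Fukuizumi--Ohta \cite[Proposition 1.1]{FOIB2003}, and that argument is precisely your scheme (the scaling $\phi^{s}_{\omega}$, the identities $\partial_{s}E_{\Omega}(\phi^{s}_{\omega})=\tfrac{2}{s}P(\phi^{s}_{\omega})$ and $P(\phi_{\omega})=0$, the virial identity $J''=4P(u(t))$, an invariant set on which $P$ is bounded above by a negative constant, and a concavity contradiction). The computations you do carry out --- scale invariance of $M$ and of $l_{\Omega}$, the formula for $\partial_{s}E_{\Omega}(\phi^{s}_{\omega})$, and $J''=4P(u(t))$ --- all check against the formulas in Sections 1, 2 and 4 of the paper.

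The gap is the one you yourself flag as the ``main obstacle,'' and it is genuine rather than cosmetic: both the invariance of $\mathcal{B}=\{S_{\omega}<d(\omega),\ P<0\}$ and the quantitative bound $P(u(t))\le S_{\omega}(u_{0})-d(\omega)$ are derived in your text from the characterization $d(\omega)=\inf\{S_{\omega}(v):v\neq0,\ P(v)=0\}$ and from ``scaling $u(t)$ back onto $\{P=0\}$.'' Neither is available: Proposition \ref{GNe} characterizes $d(\omega)$ only through the Nehari functional $I_{\omega}$, and in the presence of the trap the map $\lambda\mapsto P(v^{\lambda})$ is negative at both ends (the $\lambda^{-2}\int_{\mathbb{R}^{N}}V|v|^{2}$ term dominates as $\lambda\to0^{+}$), so $\{P=0\}$ is not reached by a one-sided rescaling and $\phi_{\omega}$ need not minimize $S_{\omega}$ on it. The repair, which is the actual content of the Fukuizumi--Ohta proof, is to work with $\mathcal{B}=\{S_{\omega}<d(\omega),\ P<0,\ I_{\omega}<0\}$ (one checks $\partial_{s}I_{\omega}(\phi^{s}_{\omega})|_{s=1}=-\tfrac{N(p-1)^{2}}{p+1}\|\phi_{\omega}\|^{p+1}_{p+1}<0$, so $\phi^{s}_{\omega}$ lands in this set for $s>1$ near $1$), to use that $I_{\omega}(v^{\lambda})\to+\infty$ as $\lambda\to0^{+}$ to produce $\lambda_{0}\in(0,1)$ with $I_{\omega}(v^{\lambda_{0}})=0$ and hence $S_{\omega}(v^{\lambda_{0}})\ge d(\omega)$ by Proposition \ref{GNe}, and then to prove a monotonicity estimate for $\lambda\mapsto\lambda^{-1}P(v^{\lambda})$ on $[\lambda_{0},1]$ that converts $S_{\omega}(v)-S_{\omega}(v^{\lambda_{0}})=\int_{\lambda_{0}}^{1}2\lambda^{-1}P(v^{\lambda})\,d\lambda$ into a bound of the form $P(v)\le\tfrac{1}{2}\left(S_{\omega}(v)-d(\omega)\right)$. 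Only with that lemma in hand does your virial argument close; as written, the step ``$P(u(t_{0}))=0$ forces $S_{\omega}(u(t_{0}))\ge d(\omega)$'' is unsupported. (A smaller remark: for orbital instability you do not need global invariance of $\mathcal{B}$ --- assuming stability already keeps $u(t)$ global and close to the orbit, and one only needs $P(u(t))\le-c$ along that trajectory --- but that bound is exactly what the missing lemma provides.)
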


Under some conditions on the rotation $|\Omega|$ and frequency $\omega$, it is possible to show that $\partial^{2}_{s}E_{\Omega}(\phi^{s}_{\omega})|_{s=1}< 0$. Notice that, since the standing wave $e^{\frac{\omega}{2}i t}\phi_{\omega}(x)$ of \eqref{GP} with $\Omega=0$ is strongly unstable in $\Sigma$ when $p>1+\frac{4}{N}$ and $\omega$ is sufficiently large (see \cite{Fukui2000}), we expect that the standing wave solution $e^{\frac{\omega}{2}i t}\phi_{\omega}(x)$ of \eqref{GP} with  $|\Omega|\ll\gamma$ can also be unstable in $\Sigma$ when $p>1+\frac{4}{N}$ and $\omega$ is sufficiently large. Indeed,  we have the following result.
\begin{corollary}\label{Eis}
Let $1+\frac{4}{N}<p<2^{\ast}$ and $\phi_{\omega}\in \mathcal{G}_{\omega}$. There exists $\epsilon>0$ small enough such that if  $|\Omega|^{2}\leq\epsilon\gamma^{2}$, then there is a sequence  $\left\{\omega_{n}\right\}^{\infty}_{n=1}$ such that the standing wave $e^{\frac{\omega_{n}}{2} i t}\phi_{\omega_{n}}(x)$ of \eqref{GP} is unstable. Moreover,  $\omega_{n}\rightarrow \infty$ as $n\rightarrow\infty$.
\end{corollary}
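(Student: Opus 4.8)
The plan is to deduce this from Theorem \ref{Ax1}: it suffices to produce frequencies $\omega_n\to\infty$ for which some $\phi_{\omega_n}\in\mathcal{G}_{\omega_n}$ satisfies $\partial_s^2 E_\Omega(\phi_{\omega_n}^s)|_{s=1}<0$ (note $|\Omega|^2\le\epsilon\gamma^2$ with $\epsilon<1$ forces $|\Omega|<\gamma$, and $\omega_n\to\infty>\lambda_0$, so the hypotheses of Theorem \ref{Ax1} are met). First I would record the behaviour of each term of $E_\Omega$ under the $L^2$-preserving dilation $\phi_\omega^s(x)=s^{N/2}\phi_\omega(sx)$. Writing $A=\|\nabla\phi_\omega\|_2^2$, $B=\int_{\mathbb{R}^N}V|\phi_\omega|^2\,dx$, $D=\|\phi_\omega\|_{p+1}^{p+1}$ and $\alpha:=N(p-1)/2$, one has $\|\nabla\phi_\omega^s\|_2^2=s^2A$, $\int_{\mathbb{R}^N}V|\phi_\omega^s|^2\,dx=s^{-2}B$ and $\|\phi_\omega^s\|_{p+1}^{p+1}=s^{\alpha}D$. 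The key structural point is that the map $u\mapsto s^{N/2}u(s\cdot)$ is unitary on $L^2$ and commutes with $x\wedge\nabla$, so the angular momentum is \emph{scale invariant}, $l_\Omega(\phi_\omega^s)=l_\Omega(\phi_\omega)$ for all $s$; hence the rotation contributes nothing to the $s$-derivatives.

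Differentiating $E_\Omega(\phi_\omega^s)=\tfrac12 s^2A+\tfrac12 s^{-2}B+l_\Omega(\phi_\omega)-\tfrac{2}{p+1}s^\alpha D$ twice gives $\partial_s^2E_\Omega(\phi_\omega^s)|_{s=1}=A+3B-\tfrac{2\alpha(\alpha-1)}{p+1}D$. Since $\phi_\omega$ is a critical point of $S_\omega$ and $s\mapsto\phi_\omega^s$ is a mass-preserving curve in $\Sigma$, we have $\partial_s S_\omega(\phi_\omega^s)|_{s=1}=0$, which—again using scale invariance of $l_\Omega$—is exactly the Pohozaev identity $P(\phi_\omega)=0$, i.e. $A-B=\tfrac{2\alpha}{p+1}D$. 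Eliminating $D$ yields the clean formula
\[
\partial_s^2 E_\Omega(\phi_\omega^s)\big|_{s=1}=(2-\alpha)\|\nabla\phi_\omega\|_2^2+(2+\alpha)\int_{\mathbb{R}^N}V|\phi_\omega|^2\,dx.
\]
Because $p>1+\tfrac{4}{N}$ we have $\alpha>2$, so this is negative precisely when $\big(\int_{\mathbb{R}^N}V|\phi_\omega|^2\,dx\big)/\|\nabla\phi_\omega\|_2^2<\tfrac{\alpha-2}{\alpha+2}$, a fixed positive threshold independent of $\omega$ and $\Omega$.

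It remains to show this ratio tends to $0$ along a sequence $\omega_n\to\infty$. I would rescale by the concentration scale, $\psi_\omega(y)=\omega^{-1/(p-1)}\phi_\omega(\omega^{-1/2}y)$, under which \eqref{EE} becomes $-\Delta\psi_\omega+\psi_\omega+\omega^{-2}V\psi_\omega-2|\psi_\omega|^{p-1}\psi_\omega+2\omega^{-1}L_\Omega\psi_\omega=0$. Using that $V$ is quadratically homogeneous, a direct change of variables gives the exact identity $\big(\int_{\mathbb{R}^N}V|\phi_\omega|^2\,dx\big)/\|\nabla\phi_\omega\|_2^2=\omega^{-2}\big(\int_{\mathbb{R}^N}V|\psi_\omega|^2\,dy\big)/\|\nabla\psi_\omega\|_2^2$. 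Thus it is enough to bound $\int_{\mathbb{R}^N}V|\psi_\omega|^2\,dy$ from above and $\|\nabla\psi_\omega\|_2$ from below, uniformly (along a subsequence) as $\omega\to\infty$; the rescaled equation shows the formal limit profile solves $-\Delta\psi+\psi=2|\psi|^{p-1}\psi$, whose nonzero solutions carry a definite kinetic energy.

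The hard part is exactly these uniform bounds, and this is where $|\Omega|^2\le\epsilon\gamma^2$ is used. From $\int_{\mathbb{R}^N}V|u|^2\,dx\ge\gamma^2\|xu\|_2^2$ together with the estimate $2|l_\Omega(u)|\le|\Omega|\big(\gamma^{-1}\|\nabla u\|_2^2+\gamma\|xu\|_2^2\big)$ (cf. \eqref{Emii}), the form $\mathfrak{t}$ is coercive with constants bounded away from degeneracy once $|\Omega|$ is a fixed small fraction of $\gamma$, so the rescaled rotation term $2\omega^{-1}L_\Omega$ stays genuinely subordinate. Combining this coercivity with the variational characterization of $d(\omega)$ (comparing $\phi_\omega$ against a fixed test profile after rescaling) should yield a uniform $\Sigma$-bound on $\psi_\omega$, which in particular controls $\int_{\mathbb{R}^N}V|\psi_\omega|^2\,dy$; the rescaled Nehari constraint $I_\omega(\phi_\omega)=0$ together with \eqref{GI} keeps $\psi_\omega$ bounded away from $0$, giving $\|\nabla\psi_\omega\|_2\ge c>0$. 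Extracting a subsequence, $\psi_{\omega_n}\rightharpoonup\psi_\infty\neq0$ solving the limit equation, the ratio is $O(\omega_n^{-2})\to0$, falls below $\tfrac{\alpha-2}{\alpha+2}$ for large $n$, and Theorem \ref{Ax1} yields instability of $e^{\frac{\omega_n}{2}it}\phi_{\omega_n}$. The principal obstacle is thus the uniform control of spatial spreading of the rescaled ground states, i.e. ruling out that $\|x\psi_\omega\|_2$ blows up as $\omega\to\infty$; the smallness of $|\Omega|$ relative to $\gamma$ is precisely what makes the confining potential dominate the rotation uniformly in the rescaled problem.
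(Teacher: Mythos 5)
Your reduction of the problem is correct and is essentially the paper's: $l_{\Omega}$ is invariant under the mass-preserving dilation, the Pohozaev identity $P(\phi_{\omega})=\partial_{s}S_{\omega}(\phi^{s}_{\omega})|_{s=1}=0$ lets you eliminate one term, and your formula $\partial^{2}_{s}E_{\Omega}(\phi^{s}_{\omega})|_{s=1}=(2-\alpha)\|\nabla\phi_{\omega}\|^{2}_{2}+(2+\alpha)\int V|\phi_{\omega}|^{2}$ is algebraically equivalent to \eqref{Aesd} (the paper eliminates the kinetic term instead and normalizes by $\|\phi_{\omega}\|^{p+1}_{p+1}$). So the corollary does reduce to showing that a potential-to-kinetic (or potential-to-$L^{p+1}$) ratio falls below a fixed threshold along some $\omega_{n}\to\infty$.

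The gap is in the asymptotic step, which you defer as ``the hard part'' but whose sketched resolution does not close. A uniform bound on the rescaled quadratic form $\|\nabla\psi_{\omega}\|^{2}_{2}+\omega^{-2}\int V|\psi_{\omega}|^{2}+2\omega^{-1}l_{\Omega}(\psi_{\omega})+\|\psi_{\omega}\|^{2}_{2}$ (which is what comparing against a fixed test profile in the variational characterization gives you) controls $\|xu\|^{2}_{2}$ only with the degenerating weight $\omega^{-2}\gamma^{2}$; hence it yields $\int V|\psi_{\omega}|^{2}\lesssim\omega^{2}$, not $O(1)$, and your claimed $O(\omega^{-2})$ decay of the ratio does not follow. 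Indeed the paper neither proves nor needs such decay: Lemma \ref{Ls1} only establishes that the \emph{combination} $\omega_{n}^{-2}\int V|\widetilde{\phi}_{\omega_{n}}|^{2}+2\omega_{n}^{-1}l_{\Omega}(\widetilde{\phi}_{\omega_{n}})$ is $\leq 4\eta$ along a sequence, and this comes from a genuinely different mechanism: the two-sided comparison $\|Q\|^{p+1}_{p+1}-\eta\leq\|\widetilde{\phi}_{\omega}\|^{p+1}_{p+1}\leq\|Q\|^{p+1}_{p+1}+\eta$ obtained by testing the variational characterizations \eqref{CVQ1} and \eqref{Vci} against $\kappa Q$ and $\kappa\widetilde{\phi}_{\omega}$ respectively (the lower bound is exactly where $|\Omega|^{2}\leq\epsilon\gamma^{2}$ enters, through the Pohozaev-type estimate of Lemma \ref{Lesim}), combined with the Nehari identity $\tilde{I}_{\omega}(\widetilde{\phi}_{\omega})=0$. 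Afterwards the rotation term is split off using \eqref{Emii}, and it is controlled only by $\epsilon\beta$, uniformly in $\omega$ --- it is not small because $\omega$ is large. Your proposal contains none of this comparison argument, and the auxiliary claim that $\psi_{\omega_{n}}\rightharpoonup\psi_{\infty}\neq 0$ is also not free, since the compactness of $\Sigma\hookrightarrow L^{p+1}$ is not uniform in the rescaled problem (the confining weight is $\omega^{-2}V$) and vanishing/translation must be ruled out. As written, the proof establishes the reduction but not the corollary.
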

\begin{remark}
We observe that under the conditions of Theorem \ref{Ax1}, if the trapping frequencies are equal ($\gamma=\gamma_{j}$, $j=1$, $\ldots$ $N$), then thanks to the conservation of the angular momentum it is possible to show that the standing wave $e^{\frac{\omega}{2}i t}\phi_{\omega}(x)$ of \eqref{GP} is strongly unstable in $\Sigma$. Indeed, the proof follows from exactly the same argument in Ohta \cite[Theorem 1]{OhSi}. In particular, we infer that the standing wave $e^{\frac{\omega_{n}}{2} i t}\phi_{\omega_{n}}(x)$ in Corollary \ref{Eis} is strongly unstable in $\Sigma$ (see the proof of Corollary \ref{Eis} and  Lemma \ref{Ls1} below). 
\end{remark}
%In the following result, we study the instability under some conditions on the angular momentum $l_{\Omega}$. In particular, 
%we remove the condition $|\Omega|^{2}\ll\gamma^{2}$ in Corollary \ref{Eis}. We recall that $\mathcal{G}_{\omega}\neq \emptyset$ for all $\omega>0$.
%\begin{corollary}\label{Newcol}
%Let $1+\frac{4}{N}<p<2^{\ast}$. Consider the set $\mathcal{M}=\left\{\phi_{\omega_{n}}\right\}^{\infty}_{n=1}$, $\phi_{\omega_{n}}\in\mathcal{G}_{\omega_{n}}$, with $\omega_{n}\rightarrow\infty$ as $n\rightarrow\infty$ and that either\\
%(i) $\sup_{n}l_{\Omega}(\phi_{\omega_{n}})=+\infty$;\\
%(ii) $\sup_{n}|l_{\Omega}(\phi_{\omega_{n}})|\leq C$ for some positive constant $C$; or\\
%(iii) $\sup_{n}l_{\Omega}(\phi_{\omega_{n}})=-\infty$ with 
%\[\lim_{n\rightarrow \infty}\omega_{n}^{-\left[{\frac{4-(N-2)(p-1)}{2(p-1)}}\right]}l_{\Omega}(\phi_{\omega_{n}})=0.\]
%Then then there is a subsequence  $\big\{\phi_{\omega_{n_{k}}}\big\}^{\infty}_{k=1}$ of $\mathcal{M}$ such that the standing waves $e^{\frac{\omega_{n_{k}}}{2} i t}\phi_{\omega_{n_{k}}}(x)$ of \eqref{GP} are unstable. 
%\end{corollary}
Now, we focus on the stability of standing waves in the mass supercritical regimen $p>1+\frac{4}{N}$. The more common approach to construct orbitally stable standing waves to \eqref{GP} is to consider the following constrained minimization problems
\begin{equation*}
J_{q}=\inf\left\{E_{\Omega}(u),\quad u\in \Sigma,\quad \|u\|^{2}_{L^{2}}=q \right\}.
\end{equation*}
In the mass subcritical case $p<1+\frac{4}{N}$ it is possible to show that $J_{q}>-\infty$  and any minimizing sequence of $J_{q}$ is relatively compact in $\Sigma$ (see \cite{ANS2019}). In particular, this implies that  the set of minimizers of $J_{q}$ is $\Sigma$-stable under the flow generated by \eqref{GP}.

On the other hand, in the mass supercritical case $p>1+\frac{4}{N}$, we have  $J_{q}=-\infty$. Indeed, we set $u_{\mu}(x):=\mu^{\frac{N}{2}}u(\mu x)$ where  $u\in \Sigma$ with $\|u\|^{2}_{2}=q$. It is not difficult to show that $\|u_{\mu}\|^{2}_{2}=\|u\|^{2}_{2}$, $l_{\Omega}(u_{\mu})=l_{\Omega}(u)$ and 
\begin{equation*}
E_{\Omega}(u_{\mu})=\frac{\mu^{2}}{2}\|\nabla u\|^{2}_{2}+\mu^{-2}\frac{1}{2}\int_{\mathbb{R}^{N}}V(x)|u(x)|^{2}dx-\frac{2\mu^{\frac{N}{2}(p-1)}}{p+1}\|u\|^{p+1}_{p+1}+l_{\Omega}(u).
\end{equation*}
Since $p>1+\frac{4}{N}$, we infer that $E_{\Omega}(u_{\mu})\rightarrow -\infty$ as $\mu$ goes to $+\infty$, and therefore $J_{q}=-\infty$. To overcome this difficulty, we consider a local minimization problem. Following \cite{BEBOJEVI2017}, for $|\Omega|<\gamma$, we define the following subsets:
\begin{align*}
D_{q}:=&\left\{u\in\Sigma: \|u\|^{2}_{2}=q \right\}, \\ 
B_{r}:=&\left\{u\in\Sigma: \|u\|^{2}_{H}\leq r \right\}, 
\end{align*}
where $\|\cdot\|_{H}$ denotes the norm (see Section \ref{S:2})
\begin{equation*}
\|u\|^{2}_{H}:= \|\nabla u\|^{2}_{2}+\int_{\mathbb{R}^{N}}V(x)|u(x)|^{2}dx+2l_{\Omega}(u).
\end{equation*}
Moreover, for a fixed $q>0$ and $r>0$, we set the following local variational problem
\begin{equation}\label{Vp1}
J^{r}_{q}=\inf\left\{E_{\Omega}(u),\quad u\in D_{q}\cap B_{r}\right\}.
\end{equation}
Using the Gagliardo-Nirenberg inequality, it is not difficult to show that if $D_{q}\cap B_{r}\neq \emptyset$, then the variational problem $J^{r}_{q}$ is well defined; that is, $J^{r}_{q}>-\infty$ (see proof of Lemma \ref{Lxa90} below). Let us denote the set of nontrivial solutions of \eqref{Vp1} by
\begin{equation*}
\mathcal{G}^{r}_{q}:=\left\{v\in D_{q}\cap B_{r}: \quad \text{$v$ is a minimizer of \eqref{Vp1}}\right\}.
\end{equation*}
The following result shows that, in the mass supercritical regime, the set $\mathcal{G}^{r}_{q}$ is not empty.
\begin{theorem}\label{Th2}
Let  $|\Omega|<\gamma$ and $1+\frac{4}{N}<p<2^{\ast}$. For any $r>0$ there exists $q_{0}>0$ such that for all $q<q_{0}$ we have:\\
(i) Any minimizing sequence for \eqref{Vp1} is precompact in $\Sigma$. \\
(ii) For every $\varphi\in\mathcal{G}^{r}_{q}$ there exists a Lagrange multiplier $\omega\in \mathbb{R}$ such that the stationary problem  \eqref{EE} is satisfied with the estimates
\begin{equation*}
\lambda_{0}<\omega\leq \lambda_{0}(1-Cq^{\frac{p-1}{2}}).
\end{equation*}
\end{theorem}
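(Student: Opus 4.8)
The plan is to run the local (constrained) compactness method of \cite{BEBOJEVI2017}, adapted to the rotating quadratic form $\mathfrak t$. The starting point is the identity $E_{\Omega}(u)=\tfrac12\|u\|_{H}^{2}-\tfrac{2}{p+1}\|u\|_{p+1}^{p+1}$ together with the coercivity produced by $|\Omega|<\gamma$. Using $\int_{\R^{N}}V|u|^{2}\ge\gamma^{2}\|xu\|_{2}^{2}$ and $2|l_{\Omega}(u)|\le\tfrac{|\Omega|}{\gamma}(\|\nabla u\|_{2}^{2}+\int_{\R^{N}}V|u|^{2})$, I obtain constants $0<c_{0}\le c_{1}$ with $c_{0}(\|\nabla u\|_{2}^{2}+\int_{\R^{N}}V|u|^{2})\le\|u\|_{H}^{2}\le c_{1}(\|\nabla u\|_{2}^{2}+\int_{\R^{N}}V|u|^{2})$; thus $\|\cdot\|_{H}$ is, modulo $\|\cdot\|_{2}$, an equivalent norm on $\Sigma$ and controls $\|\cdot\|_{\Sigma}$ on $D_{q}\cap B_{r}$. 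Since $R_{\Omega}$ is then positive, I write $\mu_{1}:=-\lambda_{0}>0$ for its lowest eigenvalue and $\psi_{0}$ for a normalized eigenfunction, so that $\mathfrak t[u]\ge\mu_{1}\|u\|_{2}^{2}$ for all $u\in\Sigma$ and $\mathfrak t$ is a nonnegative, continuous (hence convex and weakly lower semicontinuous) quadratic form on $\Sigma$.

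For part (i) I would take a minimizing sequence $\{u_{n}\}\subset D_{q}\cap B_{r}$. The bound $\|u_{n}\|_{H}^{2}\le r$ and the coercivity make $\{u_{n}\}$ bounded in $\Sigma$, so after a subsequence $u_{n}\rightharpoonup u$ in $\Sigma$. Because the potential is confining, the embeddings $\Sigma\hookrightarrow L^{2}(\R^{N})$ and $\Sigma\hookrightarrow L^{p+1}(\R^{N})$ are compact; hence $u_{n}\to u$ strongly in both spaces, giving $\|u\|_{2}^{2}=q$ and $\|u_{n}\|_{p+1}^{p+1}\to\|u\|_{p+1}^{p+1}$. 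Weak lower semicontinuity of $\mathfrak t$ yields $\|u\|_{H}^{2}\le\liminf\|u_{n}\|_{H}^{2}\le r$, so $u\in D_{q}\cap B_{r}$, and then $E_{\Omega}(u)\le\liminf E_{\Omega}(u_{n})=J^{r}_{q}$ forces $E_{\Omega}(u)=J^{r}_{q}$. Subtracting the convergent term $\tfrac{2}{p+1}\|u_{n}\|_{p+1}^{p+1}$ upgrades this to $\mathfrak t[u_{n}]\to\mathfrak t[u]$; convergence of the equivalent Hilbert norm $\mathfrak t[\cdot]+\|\cdot\|_{2}^{2}$ together with the weak convergence then gives $u_{n}\to u$ strongly in $\Sigma$.

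For part (ii) the decisive point is that, for small $q$, no minimizer touches the boundary $\{\|u\|_{H}^{2}=r\}$. Testing with $\sqrt q\,\psi_{0}\in D_{q}\cap B_{r}$ (admissible once $q\mu_{1}\le r$) gives the upper bound $J^{r}_{q}\le E_{\Omega}(\sqrt q\,\psi_{0})=\tfrac12\mu_{1}q-c\,q^{(p+1)/2}\le\tfrac{\mu_{1}}{2}q$. If a minimizer $\varphi$ had $\|\varphi\|_{H}^{2}=r$, then $J^{r}_{q}=\tfrac r2-\tfrac{2}{p+1}\|\varphi\|_{p+1}^{p+1}$, while the Gagliardo--Nirenberg inequality \eqref{GI} with $\|\nabla\varphi\|_{2}^{2}\le c_{0}^{-1}r$ and $\|\varphi\|_{2}^{2}=q$ forces $\|\varphi\|_{p+1}^{p+1}\le C r^{N(p-1)/4}q^{\beta}$, where $\beta=\tfrac12\bigl((p+1)-\tfrac{N(p-1)}{2}\bigr)>0$; hence $E_{\Omega}(\varphi)\ge\tfrac r2-o(1)$ as $q\to0$, contradicting $J^{r}_{q}\le\tfrac{\mu_{1}}{2}q\to0$ once $q$ is below a threshold $q_{0}$. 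Therefore $\varphi$ is interior, only the mass constraint is active, and there is $\omega\in\R$ with $S_{\omega}'(\varphi)=0$, i.e. \eqref{EE} holds.

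To bound $\omega$, I pair \eqref{EE} with $\varphi$ to get $\omega=(2\|\varphi\|_{p+1}^{p+1}-\mathfrak t[\varphi])/q$. A short bootstrap sharpens the a priori estimate: from $E_{\Omega}(\varphi)\le\tfrac{\mu_{1}}{2}q$, inequality \eqref{GI} and $\mathfrak t[\varphi]\le r$ give $\mathfrak t[\varphi]\le\mu_{1}q+\tilde C r^{N(p-1)/4-1}q^{\beta}\mathfrak t[\varphi]$, whence $\mathfrak t[\varphi]\le2\mu_{1}q$ for small $q$, and feeding this back yields $\|\varphi\|_{p+1}^{p+1}\le C'q^{(p+1)/2}$. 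The lower bound then comes from $\mathfrak t[\varphi]-2\|\varphi\|_{p+1}^{p+1}=2J^{r}_{q}-\tfrac{2(p-1)}{p+1}\|\varphi\|_{p+1}^{p+1}<\mu_{1}q$, i.e. $\omega>-\mu_{1}=\lambda_{0}$; the upper bound from $\omega-\lambda_{0}=\bigl(2\|\varphi\|_{p+1}^{p+1}-(\mathfrak t[\varphi]-\mu_{1}q)\bigr)/q\le 2\|\varphi\|_{p+1}^{p+1}/q\le C''q^{(p-1)/2}=-C\lambda_{0}q^{(p-1)/2}$, giving $\omega\le\lambda_{0}(1-Cq^{(p-1)/2})$. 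The main obstacle will be the interior/boundary alternative of the third paragraph, since both the choice of $q_{0}$ and all the quantitative bounds hinge on the competition between the linear energy $\sim\tfrac{\mu_{1}}{2}q$ and the subcritical power $q^{(p+1)/2}$ and on keeping the Gagliardo--Nirenberg exponents exact; a secondary technical point is the weak lower semicontinuity of the angular-momentum form, which I handle through the positivity of $R_{\Omega}$ rather than term by term.
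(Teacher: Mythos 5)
Your proposal is correct, and the overall scheme (local minimization on $D_q\cap B_r$, coercivity of $\mathfrak t$ from $|\Omega|<\gamma$, compact embedding plus weak lower semicontinuity for precompactness, test function $\sqrt q\,\psi_0$ built from the ground state of $R_\Omega$, and the identity $\omega q=2\|\varphi\|_{p+1}^{p+1}-\mathfrak t[\varphi]$ for the multiplier bounds) is the same as the paper's. The one genuinely different step is how you prevent minimizers from sitting on $\{\|u\|_H^2=r\}$ and how you get the $O(q)$ smallness of $\|\varphi\|_H^2$ that produces the rate $q^{(p-1)/2}$. The paper proves a separate key lemma (Lemma \ref{Lxa90}): a strict gap $\inf_{D_q\cap B_{rq/2}}E_\Omega<\inf_{D_q\cap(B_r\setminus B_{rq})}E_\Omega$, which simultaneously forces every minimizer into the small ball $B_{rq}$ and rules out the boundary; the bound $\|\varphi\|_H^2\le rq$ is then fed into the Gagliardo--Nirenberg step exactly where you use your bootstrap bound $\mathfrak t[\varphi]\le 2\mu_1 q$. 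Your replacement --- a direct energy comparison $\tfrac r2-Cq^{\beta}>\tfrac{\mu_1}{2}q$ to exclude the boundary, plus the bootstrap $\mathfrak t[\varphi]\le\mu_1q+\tilde Cr^{N(p-1)/4-1}q^{\beta}\mathfrak t[\varphi]$ --- is correct and arguably more economical for the theorem itself, and your constant is even independent of $r$. What the paper's lemma buys in exchange is information about \emph{all} low-energy elements of $D_q\cap B_r$ (not just exact minimizers): this dichotomy between the inner ball and the annulus is reused verbatim in the continuity argument proving Corollary \ref{Et}, where one must control near-minimizers along the flow; with your version that stability argument would need to be reworked.
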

Note that from the above theorem,  $\omega\rightarrow \lambda_{0}$ as $q\rightarrow 0$. Moreover, if $\varphi\in \mathcal{G}^{r}_{q}$, then there exists $\omega>\lambda_{0}$ such that $\varphi$ is a solution of stationary problem \eqref{EE}. In particular, $u(x,t)=e^{\frac{\omega}{2} i t}\varphi(x)$ is a standing wave solution to \eqref{GP}. \\
We have the following stability result for the set $\mathcal{G}^{r}_{q}$.
\begin{corollary}\label{Et}
If $|\Omega|<\gamma$,  then for any fixed $r>0$ and $q<q_{0}$ given in the Theorem \ref{Th2} we have that the set $\mathcal{G}^{r}_{q}$ is $\Sigma$-stable with respect to \eqref{GP}.
\end{corollary}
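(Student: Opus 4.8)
The plan is to deduce Corollary \ref{Et} from the classical Cazenave--Lions concentration-compactness scheme, whose engine is precisely the precompactness of minimizing sequences provided by Theorem \ref{Th2}(i), combined with the conservation of mass and energy \eqref{Cls}. The argument splits into a confinement (invariance) step and a stability-by-contradiction step.

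Before running the stability argument I would first establish that solutions issued from data close to $\mathcal{G}^{r}_{q}$ are globally defined and remain trapped inside $B_{r}$. The mechanism is an energy barrier: using the Gagliardo--Nirenberg inequality together with the smallness of $q$ (exactly as in the proof that $J^{r}_{q}>-\infty$), one shows there exist $\rho_{0}\in(0,r)$ and $\delta>0$ such that $E_{\Omega}(u)\ge J^{r}_{q}+\delta$ for every $u\in D_{q}$ with $\rho_{0}\le\|u\|^{2}_{H}\le r$; in other words the minimizers, which sit in $\{\|u\|^{2}_{H}<\rho_{0}\}$, are separated from $\partial B_{r}$ by a shell of strictly higher energy. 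Since $|\Omega|<\gamma$, the inequality \eqref{Emii} gives the coercivity $\|u\|^{2}_{\Sigma}\lesssim \|u\|^{2}_{H}+M(u)$, so $D_{q}\cap B_{r}$ is $\Sigma$-bounded and any solution confined to $B_{r}$ has bounded $\Sigma$-norm, hence is global by the local theory. If now $u_{0}$ is $\Sigma$-close to $\mathcal{G}^{r}_{q}$, then $M(u_{0})$ is close to $q$ and $E_{\Omega}(u_{0})$ close to $J^{r}_{q}$; as $t\mapsto\|u(t)\|^{2}_{H}$ is continuous, conservation of mass and energy together with the barrier forbid $\|u(t)\|^{2}_{H}$ from ever reaching $\rho_{0}$, so the orbit stays in $\{\|u\|^{2}_{H}<\rho_{0}\}\subset B_{r}$ for all time.

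With confinement in hand I would argue by contradiction. Suppose $\mathcal{G}^{r}_{q}$ is not $\Sigma$-stable: there are $\epsilon_{0}>0$, data $u_{0,n}$ with $\inf_{v\in\mathcal{G}^{r}_{q}}\|u_{0,n}-v\|_{\Sigma}\to0$, and times $t_{n}$ with $\inf_{v\in\mathcal{G}^{r}_{q}}\|u_{n}(t_{n})-v\|_{\Sigma}\ge\epsilon_{0}$, where $u_{n}$ solves \eqref{GP} with datum $u_{0,n}$. By the previous step each $u_{n}$ is global and stays in $B_{r}$. Put $v_{n}:=u_{n}(t_{n})$. Conservation of mass and energy, plus continuity of $M$ and $E_{\Omega}$ on $\Sigma$ at the minimizers, give $M(v_{n})=M(u_{0,n})\to q$ and $E_{\Omega}(v_{n})=E_{\Omega}(u_{0,n})\to J^{r}_{q}$, with $\|v_{n}\|^{2}_{H}<\rho_{0}$. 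Renormalizing $\tilde v_{n}:=\sqrt{q/M(v_{n})}\,v_{n}\in D_{q}$, the limit $M(v_{n})\to q$ yields $\|\tilde v_{n}-v_{n}\|_{\Sigma}\to0$, while $\|\tilde v_{n}\|^{2}_{H}=(q/M(v_{n}))\|v_{n}\|^{2}_{H}\to$ something below $\rho_{0}<r$, so $\tilde v_{n}\in D_{q}\cap B_{r}$ for large $n$ and $E_{\Omega}(\tilde v_{n})\to J^{r}_{q}$; thus $\{\tilde v_{n}\}$ is a minimizing sequence for $J^{r}_{q}$. By Theorem \ref{Th2}(i) it is precompact in $\Sigma$, so along a subsequence $\tilde v_{n}\to w$ in $\Sigma$ with $w\in\mathcal{G}^{r}_{q}$, whence $v_{n}\to w$ as well. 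This forces $\inf_{v\in\mathcal{G}^{r}_{q}}\|v_{n}-v\|_{\Sigma}\to0$, contradicting the lower bound $\ge\epsilon_{0}$.

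I expect the genuine obstacle to be the confinement step rather than the compactness step: the $H$-norm is not a conserved quantity (only mass and energy are, by \eqref{Cls}), so keeping $\|u(t)\|^{2}_{H}\le r$ demands the quantitative energy barrier and a careful first-crossing/continuity argument, and it is precisely here that the smallness $q<q_{0}$ and the supercritical threshold $p>1+\tfrac{4}{N}$ enter, to control the Gagliardo--Nirenberg term on the shell near $\partial B_{r}$. The mass renormalization is routine but must be carried out with the strict interior margin $\rho_{0}<r$ in view, so that the renormalized sequence genuinely lands in the admissible set $D_{q}\cap B_{r}$.
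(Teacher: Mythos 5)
Your proposal is correct and follows essentially the same route as the paper: a contradiction argument combining conservation of mass and energy with the precompactness of minimizing sequences from Theorem \ref{Th2}(i), with the strict inequality of Lemma \ref{Lxa90} guaranteeing that the orbit cannot reach the boundary of $B_{r}$. The only difference is organizational: you confine the orbit by an explicit quantitative energy barrier on the shell $\rho_{0}\leq \|u\|^{2}_{H}\leq r$ before running the contradiction, whereas the paper obtains the same confinement inside the contradiction by applying Theorem \ref{Th2}(i) to the sequence evaluated at first-crossing times $t_{n}^{\ast}$ and contradicting the fact that minimizers lie strictly inside $B_{r}$.
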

We remark that nothing is known about orbital stability of standing waves in the supercritical case when $|\Omega|>\gamma$. The study of  the stability seems highly non-trivial; see the discussion presented after formula (1.6) in \cite{ANS2019} for more details.

This paper is organized as follows. In Section \ref{S:1} we prove our global existence/blow-up result stated in Theorem \ref{SC}. In Section \ref{S:2}  we prove, by variational techniques, the existence of ground states (Proposition \ref{GNe}).  In Section \ref{S:4}, we analyze the  instability of the standing waves in Corollary \ref{Eis}.  Finally,  Section \ref{S:5} is devoted to the proof of Theorem \ref{Th2} and Corollary \ref{Et}.

%%%%%%%%%%%%%%%%%%%%%%%%%%%%%%%%%%%%%%%%%%%%%%%%%%%%%%%%%%%%%%%%%%%%%%%%%%%%%%%%%%%%%%%%%%%%%%%%%%%%%%%%%%%%%%%%%%%%%%%%%%%%%%%

\section{Conditions for Global existence and blow-up} \label{S:1}

In this section, we prove Theorem \ref{SC}. First we recall the sharp Gagliardo-Nirenberg inequality \cite{CB},
\begin{equation}\label{GI}
\|u\|^{p+1}_{{p+1}}\leq c_{GN}\|\nabla u\|^{\frac{N(p-1)}{2}}_{2}\|u\|^{p+1-\frac{N(p-1)}{2}}_{2},
\end{equation}
where the sharp constant $c_{GN}>0$ is explicitly given by
\begin{equation*}
c_{GN}=\left(\frac{2N(p-1)}{2(p+1)-N(p-1)}\right)^{\frac{4-N(p-1)}{4}}\frac{(p+1)}{N(p-1)\|Q\|^{p-1}_{2}}.
\end{equation*}
Next we recall the standard viral identity related to \eqref{GP} (see \cite{AEWP}). 
\begin{lemma}
Let $u_{0}\in \Sigma$ and $u(x,t)$ the corresponding solution of Cauchy problem \eqref{GP} on $[0,T)$, where $T$ is the maximum time of existence. We put $J(t):=\int_{\mathbb{R}^{N}}|x|^{2}|u(x,t)|^{2}\,dx$. Then we have for all $t\in [0,T)$
\begin{equation*}
J^{\prime}(t)=2Im\int_{\mathbb{R}^{N}}\left(\nabla u(x,t)\cdot x\right)\overline{u}(x,t)\,dx
\end{equation*}
and
\begin{equation*}
J^{\prime\prime}(t)=2\int_{\mathbb{R}^{N}}|\nabla u|^{2}\,dx-2\int_{\mathbb{R}^{N}}V(x)|u(x,t)|^{2}\,dx-2N\left(\frac{p-1}{p+1}\right)\int_{\mathbb{R}^{N}}|u(x,t)|^{p+1}\,dx.
\end{equation*}
\end{lemma}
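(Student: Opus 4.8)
The plan is to prove both identities by the classical virial computation---differentiate $J$ in time, eliminate $u_t$ via \eqref{GP}, and integrate by parts---while keeping careful track of the rotation term $L_\Omega$ and of the anisotropic potential $V$. Since $u$ is only known to belong to $C([0,T),\Sigma)$, I would first perform the computation formally and then justify it by a standard approximation argument: approximate $u_0$ in $\Sigma$ by data in the domain of $R_\Omega$, for which the solution is classical and all the integrations by parts below are legitimate, and pass to the limit using continuous dependence in $\Sigma$ (alternatively, one inserts a radial cut-off $\chi(x/R)$, integrates by parts, and lets $R\to\infty$). I expect this passage from the formal to the rigorous identity to be the only genuinely delicate point, the algebra itself being routine.

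For $J'$, write $J'(t)=2\,\RE\int_{\R^N}|x|^2 u_t\,\overline u\,dx$ and substitute $iu_t=\tfrac12 R_\Omega u-|u|^{p-1}u$, so that $J'(t)=\IM\int_{\R^N}|x|^2\,\overline u\,R_\Omega u\,dx$; the nonlinear term drops because $|u|^{p-1}u\,\overline u=|u|^{p+1}$ is real. In $R_\Omega=-\Delta+V+2L_\Omega$ the potential contribution $\int_{\R^N}|x|^2 V|u|^2\,dx$ is real and also drops. One integration by parts on the Laplacian gives $\IM\int_{\R^N}|x|^2\,\overline u\,(-\Delta u)\,dx=2\,\IM\int_{\R^N}(x\cdot\nabla u)\,\overline u\,dx$, the term $\int|x|^2|\nabla u|^2$ being real. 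It remains to see that the rotation term $2\,\IM\int_{\R^N}|x|^2\,\overline u\,L_\Omega u\,dx$ vanishes: using $\RE(\overline u\,\partial_{x_j} u)=\tfrac12\partial_{x_j}|u|^2$ and integrating by parts, it reduces (in the planar case) to $\int_{\R^N}|x|^2(x_1\partial_{x_2}-x_2\partial_{x_1})|u|^2\,dx$, whose two pieces both equal $-2\int x_1 x_2|u|^2$ and cancel, with the obvious analogue in $\R^3$. Conceptually this is because $L_\Omega$ commutes with the radial multiplier $|x|^2$, since it generates rotations. This yields the first identity.

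For $J''$, differentiate $J'(t)=2\,\IM\int_{\R^N}(x\cdot\nabla u)\,\overline u\,dx$, substitute \eqref{GP} again, and integrate by parts. The cleanest bookkeeping is through the commutators of the linear part $R_\Omega$ with the dilation generator $x\cdot\nabla+\tfrac N2$: the Laplacian produces $2\int_{\R^N}|\nabla u|^2\,dx$; the harmonic potential produces $-\int_{\R^N}(x\cdot\nabla V)|u|^2\,dx=-2\int_{\R^N}V|u|^2\,dx$, where I use the Euler relation $x\cdot\nabla V=2V$ valid since $V=\sum_j\gamma_j^2 x_j^2$ is homogeneous of degree two; and the gauge-invariant nonlinearity produces $-2N\tfrac{p-1}{p+1}\int_{\R^N}|u|^{p+1}\,dx$ (the factor $N$ coming from $\nabla\cdot x=N$). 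The rotation term again drops, now because $L_\Omega$ commutes with the dilation generator, i.e. $[L_\Omega,x\cdot\nabla]=0$---rotations and dilations commute---so $L_\Omega$ contributes nothing to $J''$. Collecting the three terms gives the stated formula.

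The only real work is thus the regularization in the first paragraph; once the computation is legitimate, the two cancellations of $L_\Omega$ (against $|x|^2$ in $J'$ and against $x\cdot\nabla$ in $J''$) are the substantive algebraic facts, and both are manifestations of $L_\Omega$ being the generator of rotations.
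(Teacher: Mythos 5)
Your proposal is correct, and the comparison here is somewhat one-sided: the paper does not prove this lemma at all, but simply recalls it as the standard virial identity and cites Antonelli--Marahrens--Sparber \cite{AEWP} for it. Your derivation is the standard one and checks out, including the two substantive points --- the vanishing of the rotation term in $J'$ (the vector field $\Omega\wedge x$ is tangent to spheres, so it annihilates the radial weight $|x|^2$) and in $J''$ (rotations commute with dilations), the Euler relation $x\cdot\nabla V=2V$ giving the coefficient $-2\int V|u|^2$, and the need to justify the formal computation by regularizing the data or truncating the weight. The resulting coefficients agree with the paper's formula (and with the equivalent form \eqref{VFM} used later), so nothing is missing.
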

Note that we can compute the virial identity in terms of $E_{\Omega}(u)$ and $l_{\Omega}(u)$. In indeed, a simple computation shows
\begin{align}\nonumber
J^{\prime\prime}(t)&=\left(\frac{4-N(p-1)}{2}\right)\|\nabla u(t)\|^{2}_{2}-\left(\frac{N(p-1)+4}{2}\right)\int_{\mathbb{R}^{N}}V(x)|u(x)|^{2}dx\\ 
&+N(p-1)\left(E_{\Omega}(u(t))-l_{\Omega}(u(t))\right). \label{VFM}
\end{align}
We will frequently use the following inequality 
\begin{equation}\label{Emii}
|l_{\Omega}(\psi)| \leq \frac{1}{2a}|\Omega|^{2}\|x\psi\|^{2}_{2}+\frac{a}{2}\|\nabla \psi\|^{2}_{2}.
\end{equation}

The proof of Theorem \ref{SC} is based on the following result.

\begin{lemma}\label{lemaf}
Under the conditions of the Theorem \ref{SC} the following statements hold. Assume that
\begin{equation}\label{C1}
\begin{split}
\left(E_{\Omega}(u_{0})-l\right)^{s_{c}}M(u_{0})^{1-s_{c}}&<E_{0,0}(Q)^{s_{c}}M(Q)^{1-s_{c}},\\
E_{\Omega}(u_{0})-l\geq 0,
\end{split}
\end{equation}
(i) If 
\begin{equation}\label{C2}
\|\nabla u_{0}\|^{s_{c}}_{2}\| u_{0}\|^{1-s_{c}}_{2}<\|\nabla Q\|^{s_{c}}_{2}\| Q\|^{1-s_{c}}_{2},
\end{equation}
then $u(t)$ is a global solution and for every $t\in \mathbb{R}$
\[
\|\nabla u(t)\|^{s_{c}}_{2}\| u_{0}\|^{1-s_{c}}_{2}<\|\nabla Q\|^{s_{c}}_{2}\| Q\|^{1-s_{c}}_{2}.
\]
(ii) If 
\begin{equation}\label{C4}
\|\nabla u_{0}\|^{s_{c}}_{2}\| u_{0}\|^{1-s_{c}}_{2}>\|\nabla Q\|^{s_{c}}_{2}\| Q\|^{1-s_{c}}_{2},
\end{equation}
 then the solution $u(t)$ blows up at finite time. Moreover, we also have
\begin{equation*}
\|\nabla u(t)\|^{s_{c}}_{2}\| u_{0}\|^{1-s_{c}}_{2}>\|\nabla Q\|^{s_{c}}_{2}\| Q\|^{1-s_{c}}_{2}
\end{equation*}
for every $t$ in the existence time.\\
(iii) If, in place of \eqref{C1} and \eqref{C4}, we assume 
\begin{equation}\label{Fe1}
 E_{\Omega}(u_{0})-l<0,
\end{equation}
then the solution $u(t)$ blows up at finite time in $\Sigma$. Moreover, for every $t$ in the existence time we have 
\begin{equation*}
\|\nabla u(t)\|_{2}\geq \left(\frac{(p-1)N}{4}\right)^{\frac{1}{s_{c}(p-1)}}\left(\frac{\| Q\|_{2}}{\| u_{0}\|_{2}}\right)^{\frac{1-s_{c}}{s_{c}}}\|\nabla Q\|_{2}.
\end{equation*}
\end{lemma}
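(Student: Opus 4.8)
The plan is to adapt the virial/variational dichotomy of Holmer--Roudenko, absorbing the rotation and the potential through two elementary reductions. First I would record the ground-state relations for $Q$: multiplying \eqref{Izx} by $\overline{Q}$ and integrating gives $\frac{1}{2}\|\nabla Q\|_{2}^{2}+\|Q\|_{2}^{2}=\|Q\|_{p+1}^{p+1}$, while the Pohozaev identity (obtained by pairing \eqref{Izx} with $x\cdot\nabla Q$) supplies a second relation; together they pin down $E_{0,0}(Q)$, $\|\nabla Q\|_{2}$ and $\|Q\|_{p+1}$ in terms of one another. Equivalently, $Q$ saturates the sharp inequality \eqref{GI}, so that the extremal value of the scale-invariant map $y\mapsto\frac{1}{2}y^{2}-\frac{2c_{GN}}{p+1}y^{N(p-1)/2}\|u_{0}\|_{2}^{p+1-N(p-1)/2}$, taken at $y=\|\nabla u\|_{2}$, is controlled by $E_{0,0}(Q)$ after the natural rescaling.

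The crucial reduction is the inequality $E_{\Omega}(u_{0})-l\ge E_{\mathrm{free}}(u(t))$, where $E_{\mathrm{free}}(u):=\frac{1}{2}\|\nabla u\|_{2}^{2}-\frac{2}{p+1}\|u\|_{p+1}^{p+1}$. This is immediate from the definitions: by conservation of energy and $l_{\Omega}(u(t))\ge l$ one has $E_{\Omega}(u_{0})-l\ge E_{\Omega}(u(t))-l_{\Omega}(u(t))$, and the right-hand side equals $E_{\mathrm{free}}(u(t))+\frac{1}{2}\int_{\mathbb{R}^{N}}V(x)|u(x,t)|^{2}\,dx\ge E_{\mathrm{free}}(u(t))$ since $V\ge0$. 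Feeding \eqref{GI} and the mass conservation $\|u(t)\|_{2}=\|u_{0}\|_{2}$ into this bound yields, for every $t$, an inequality of the shape $\frac{(E_{\Omega}(u_{0})-l)^{s_{c}}M(u_{0})^{1-s_{c}}}{E_{0,0}(Q)^{s_{c}}M(Q)^{1-s_{c}}}\ge F(\eta(t))$, where $\eta(t):=\|\nabla u(t)\|_{2}^{s_{c}}\|u_{0}\|_{2}^{1-s_{c}}/\bigl(\|\nabla Q\|_{2}^{s_{c}}\|Q\|_{2}^{1-s_{c}}\bigr)$ and $F$ is single-humped with $F(0)=0$ and $F(1)=1=\max F$. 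Hypothesis \eqref{C1} forces the left-hand side to be $<1$, so $\eta(t)\ne1$ for all $t$; since $t\mapsto\|\nabla u(t)\|_{2}$ is continuous on $[0,T_{+})$, $\eta$ stays strictly on the side fixed by the initial datum.

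Part (i) is then immediate: \eqref{C2} puts $\eta(0)<1$, hence $\eta(t)<1$ and $\|\nabla u(t)\|_{2}\le C$ uniformly, and the blow-up alternative in the local existence result gives $T_{+}=\infty$. For part (ii), \eqref{C4} gives $\eta(t)>1$ for all $t$, and I would insert this trapping into \eqref{VFM}: using $V\ge0$, $l_{\Omega}(u(t))\ge l$ and $4-N(p-1)<0$ one obtains $J''(t)\le\frac{4-N(p-1)}{2}\|\nabla u(t)\|_{2}^{2}+N(p-1)\bigl(E_{\Omega}(u_{0})-l\bigr)$, and the ground-state relations together with the strict, time-uniform gap $\eta(t)>1$ upgrade this to $J''(t)\le-\delta<0$ with $\delta$ independent of $t$; as $J(t)\ge0$ this is impossible on $[0,\infty)$, so $T_{+}<\infty$. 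Part (iii) is the most direct: under \eqref{Fe1} every term in \eqref{VFM} is nonpositive and the last is strictly negative, whence $J''(t)\le N(p-1)(E_{\Omega}(u_{0})-l)<0$ and finite-time blow-up follows; the explicit lower bound on $\|\nabla u(t)\|_{2}$ is obtained by substituting $E_{\mathrm{free}}(u(t))\le E_{\Omega}(u_{0})-l<0$ into \eqref{GI} and rearranging, using the identity $\frac{N(p-1)}{2}-2=s_{c}(p-1)$ and the explicit value of $c_{GN}$.

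I expect the main obstacle to be part (ii): converting the qualitative trapping $\eta(t)>1$ into a \emph{uniform} strictly negative upper bound for $J''(t)$. This hinges on the exact algebraic matching between the virial coefficient $\tfrac{4-N(p-1)}{2}$, the Pohozaev identities for $Q$, and the scale-invariant energy gap supplied by \eqref{C1}. The rotation does not enter this matching directly, since it has already been discarded through $l_{\Omega}(u(t))\ge l$ and $V\ge0$; this is exactly what lets the zero-potential Holmer--Roudenko computation transfer essentially verbatim.
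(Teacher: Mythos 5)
Your proposal is correct and follows essentially the same route as the paper's proof: the Gagliardo--Nirenberg inequality combined with energy conservation, $l_{\Omega}(u(t))\geq l$ and $V\geq 0$ yields the scalar inequality $f(\|\nabla u(t)\|_{2})\leq E_{\Omega}(u_{0})-l<f(x_{1})$, the Pohozaev identities for $Q$ identify $x_{1}$ and $f(x_{1})$ with the ground-state quantities, continuity traps $\|\nabla u(t)\|_{2}$ on one side of $x_{1}$, and the virial identity \eqref{VFM} with a quantified strict gap (coming from the strict inequality in \eqref{C1}) closes parts (ii) and (iii). The paper implements the uniform negativity of $J''$ exactly as you anticipate, via constants $\delta_{1},\delta_{2}>0$ extracted from the strict energy inequality and the trapping $\|\nabla u(t)\|_{2}^{2}\geq x_{1}^{2}+\delta_{2}$.
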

\begin{proof}
The proof is inspired by the one of Theorem 2.1 in \cite{HR1} (see also \cite{HR23}). Let $u(t)$ be the corresponding solution of \eqref{GP} with initial data $u_{0}$.  By the sharp Gagliardo-Nirenberg inequality \eqref{GI} we get
\begin{align*}
E_{\Omega}(u)-l_{\Omega}(u)&=\frac{1}{2}\|\nabla u\|^{2}_{2}+\frac{1}{2}\int_{\mathbb{R}^{N}}V(x)|u(x)|^{2}dx-\frac{2}{p+1}\|u\|^{p+1}_{{p+1}} \\ 
& \geq \frac{1}{2}\|\nabla u\|^{2}_{2}-\frac{2\,c_{GN}}{p+1}\|\nabla u\|^{\frac{N(p-1)}{2}}_{2}\|u_{0}\|^{p+1-\frac{N(p-1)}{2}}_{2}.
\end{align*}
Define the function $f(x)=\frac{1}{2}x^{2}-\beta_{p,N} x^{\frac{N}{2}(p-1)}$, where $\beta_{p,N}=\frac{2\,c_{GN}}{p+1}\|u_{0}\|^{p+1-\frac{N(p-1)}{2}}_{2}$. Notice that $\text{deg}(f)\geq 2$ and 
\begin{align*}
f^{\prime}(x)&=x-\frac{N}{2}(p-1)\beta_{p,n} x^{\frac{N}{2}(p-1)-1}  \\ 
&=x\left(1-\frac{N}{2}(p-1)\beta_{p,N} x^{(p-1)s_{c}}\right).
\end{align*}
A simple computation shows that $f^{\prime}(x)=0$ when  $x_{0}=0$ and 
\begin{equation*}
x_{1}=\left(\frac{2}{N\beta_{p,N}(p-1)}\right)^{\frac{2}{N(p-1)-4}}=\left(\frac{p+1}{ N(p-1)c_{GN}}\right)^{\frac{1}{s_{c}(p-1)}}{\|u_{0}\|_{2}^{-\left(\frac{1-s_{c}}{s_{c}}\right)}}.
\end{equation*}
Notice that  $f$ has a local minimum at $x_{0}$ and a local maximum at $x_{1}$, with maximum value $f(x_{1})=\frac{s_{c}}{N}x^{2}_{1}$. 
Now, it is not difficult to show that
\begin{equation*}
\left(\frac{p+1}{ N(p-1)c_{GN}}\right)^{\frac{1}{s_{c}(p-1)}}=\left(\frac{2N(p-1)}{2(p+1)-N(p-1)}\right)^{\frac{1}{2}}{\|Q\|_{2}^{\frac{1}{s_{c}}}}.
\end{equation*}
Moreover, by the Pohozaev identities  we infer that
\begin{align*}
\|\nabla Q\|_{2}\| Q\|^{\frac{1-s_{c}}{s_{c}}}_{2}&=\left(\frac{2N(p-1)}{2(p+1)-N(p-1)}\right)^{\frac{1}{2}}{\|Q\|_{2}^{\frac{1}{s_{c}}}},\\
E_{0,0}(Q)M(Q)^{\frac{(1-s_{c})}{s_{c}}}&=\frac{s_{c}}{N}\left(\frac{2N(p-1)}{2(p+1)-N(p-1)}\right){\|Q\|_{2}^{\frac{2}{s_{c}}}}.
\end{align*}
In particular, since $f(x_{1})=\frac{s_{c}}{N}x^{2}_{1}$, using the condition \eqref{C1} we see that
\begin{align}\nonumber
E_{\Omega}(u_{0})-l&<E_{0,0}(Q)M(Q)^{\frac{(1-s_{c})}{s_{c}}}\|u_{0}\|^{-2\frac{(1-s_{c})}{s_{c}}}_{2}\\\nonumber
&= \frac{s_{c}}{N}\left(\frac{2N(p-1)}{2(p+1)-N(p-1)}\right){\|Q\|_{2}^{\frac{2}{s_{c}}}}\|u_{0}\|^{-2\frac{(1-s_{c})}{s_{c}}}_{2}\\ \label{IF1}
&=f(x_{1}).
\end{align}
Since $E_{\Omega}(u(t))$ is independent of $t$, we infer that
\begin{equation}\label{Ei}
f(\|\nabla u(t)\|_{2})\leq E_{\Omega}(u(t))-l_{\Omega}(u(t))\leq E_{\Omega}(u_{0})-l<f(x_{1}).
\end{equation}
On the other hand, using the condition \eqref{C2} we obtain
\begin{align}\nonumber
\|\nabla u_{0}\|_{2}&<\|\nabla Q\|_{2}\| Q\|^{\frac{(1-s_{c})}{s_{c}}}_{2}\|u_{0}\|^{-\frac{(1-s_{c})}{s_{c}}}_{2}\\ \nonumber
&=\left(\frac{2N(p-1)}{2(p+1)-N(p-1)}\right)^{\frac{1}{2}}{\|Q\|_{2}^{\frac{1}{s_{c}}}}\|u_{0}\|^{-\frac{(1-s_{c})}{s_{c}}}_{2}=x_{1}.
\end{align}
Therefore, by the continuity of $\|\nabla u(t)\|_{2}$ in $t$, and considering \eqref{Ei} we deduce that  
$\|\nabla u(t)\|_{2}<x_{1}$ for any $t$ as long as the solutions exists.  Denote by $I$ the maximal interval of existence of the solution $u$. Since  $\|\nabla u(t)\|_{2}\leq K$ for any $t\in I$, from the energy conservation and \eqref{GI} we get
\[\begin{split}
\frac{1}{2}\|\nabla u(t)\|^{2}_{2}+\frac{1}{2}\int_{\mathbb{R}^{N}}V(x)|u(x)|^{2}dx=E_{\Omega}(u_{0})-l_{\Omega}(u(t))+\frac{2}{p+1}\|u(t)\|^{p+1}_{p+1}\\ 
\leq E_{\Omega}(u_{0})-l+C\|\nabla u(t)\|^{\frac{N(p-1)}{2}}_{2}\|u(t)\|^{p+1-\frac{N(p-1)}{2}}_{2}\\
\leq E_{\Omega}(u_{0})-l+CK^{\frac{N(p-1)}{2}}\|u_{0}\|^{p+1-\frac{N(p-1)}{2}}_{2}.
\end{split}\]
for every $t\in I$. Thus, $\|u(t)\|^{2}_{\Sigma}$ is bounded for all time $t\in I$. Then we infer  that the solution exists globally in time. Next, we turn our attention to the proof of part (ii). Suppose by contradiction that the corresponding solution $u(t)$ of \eqref{GP} with $u(0)=u_{0}$ satisfies the hypothesis
\eqref{C1}-\eqref{C4} exists globally. Notice that by the condition \eqref{C4}, we have $\|\nabla u_{0}\|_{2}>x_{1}$. Now applying the condition \eqref{C1}, it is clear that there exists $\delta_{1}>0$ such that 
\begin{equation*}
(E_{\Omega}(u_{0})-l)M(u_{0})^{\frac{1-s_{c}}{s_{c}}}<(1-\delta_{1})E_{0,0}(Q)M(Q)^{\frac{1-s_{c}}{s_{c}}}.
\end{equation*}
 We deduce from \eqref{IF1}-\eqref{Ei} that 
\begin{equation*}
f(\|\nabla u(t)\|_{2})\leq E_{\Omega}(u(t))-l_{\Omega}(u(t))\leq E_{\Omega}(u_{0})-l<(1-\delta_{1})f(x_{1}).
\end{equation*}
Therefore, by the continuity of $\|\nabla u(t)\|_{2}$ in $t$ and \eqref{C4},  there exists $\delta_{2}>0$ such that  $\|\nabla u(t)\|^{2}_{2}\geq x^{2}_{1}+\delta_{2}$ for any $t\geq 0$. Thus, using the relation \eqref{IF1} and multiplying the viral identity \eqref{VFM} by $M[u_{0}]^{\frac{s_{c}}{1-s_{c}}}$ we obtain for $t>0$,
\begin{align}\nonumber
M(u_{0})^{\frac{1-s_{c}}{s_{c}}}J^{\prime\prime}(t)&=N(p-1)\left(E_{\Omega}(u_{0})-l_{\Omega}(u(t))\right)M[u_{0}]^{\frac{1-s_{c}}{s_{c}}}\\\nonumber
&-\left(\frac{N(p-1)-4}{2}\right)\|\nabla u(t)\|^{2}_{2}M[u_{0}]^{\frac{1-s_{c}}{s_{c}}}\\\nonumber
&-\left(\frac{N(p-1)+4}{2}\right)\int_{\mathbb{R}^{N}}V(x)|u(x)|^{2}dx\,M[u_{0}]^{\frac{1-s_{c}}{s_{c}}}\\\nonumber
&<N(p-1)\left(E_{\Omega}(u_{0})-l\right)M[u_{0}]^{\frac{1-s_{c}}{s_{c}}}\\\nonumber
&-\left(\frac{N(p-1)-4}{2}\right)\|\nabla u(t)\|^{2}_{2}M[u_{0}]^{\frac{1-s_{c}}{s_{c}}}\\\nonumber
&<N(p-1)\left(\frac{s_{c}}{N}\right)x^{2}_{1}M[u_{0}]^{\frac{1-s_{c}}{s_{c}}}-\left(\frac{N(p-1)-4}{2}\right)x^{2}_{1}M[u_{0}]^{\frac{1-s_{c}}{s_{c}}}\\ \nonumber
&-\left(\frac{N(p-1)-4}{2}\right)\delta_{2}M[u_{0}]^{\frac{1-s_{c}}{s_{c}}}\\ \label{IT}
&=-\left(\frac{N(p-1)-4}{2}\right)\delta_{2}M[u_{0}]^{\frac{1-s_{c}}{s_{c}}}.
\end{align}
Since $p>1+\frac{4}{N}$, integrating \eqref{IT} twice and taking $t$ large, the right-hand side of \eqref{IT} becomes negative, which is a contradiction. Thus, the maximum existence time is finite.

Next, we prove (iii) of lemma. Since $E_{\Omega}(u_{0})-l<0$, by \eqref{VFM} we infer that the corresponding solution blows up in finite time. Now we will show that
\begin{equation*}
\|\nabla u(t)\|_{2}\geq \left(\frac{(p-1)N}{4}\right)\left(\frac{\| Q\|_{2}}{\| u_{0}\|_{2}}\right)^{\frac{1-s_{c}}{s_{c}}}\|\nabla Q\|_{2}.
\end{equation*}
for every $t$ in the existence time. Indeed, using \eqref{GI} and multiplying both sides of  $E_{\Omega}(u)$ by $M(u)^{\frac{1-s_{c}}{s_{c}}}$ we infer that
\begin{align*}
\left(E_{\Omega}(u)-l_{\Omega}(u)\right)M(u)^{\frac{1-s_{c}}{s_{c}}}=&\frac{1}{2}\left(\|\nabla u\|_{2}\| u\|^{\frac{1-s_{c}}{s_{c}}}_{2}\right)^{2}-\frac{2}{p+1}\| u\|^{\frac{2(1-s_{c})}{s_{c}}}_{2}\| u\|^{p+1}_{2}\\
\geq &\frac{1}{2}\left(\|\nabla u\|_{2}\| u\|^{\frac{1-s_{c}}{s_{c}}}_{2}\right)^{2}-\frac{2\,c_{GN}}{p+1}\left(\|\nabla u\|_{2}\| u\|^{\frac{1-s_{c}}{s_{c}}}_{2}\right)^{\frac{n(p-1)}{2}}\\
=& h\left(\|\nabla u\|_{2}\| u\|^{\frac{1-s_{c}}{s_{c}}}_{2}\right),
\end{align*}
where 
\begin{equation*}
h(x)=\frac{1}{2}x^{2}-\frac{2\,c_{GN}}{p+1}x^{\frac{N(p-1)}{2}}, \quad \text{for $x\geq 0$.}
\end{equation*}
A simple computation shows that $h$ is increasing on $(0, x_{max})$ and decreasing on $(x_{max}, \infty)$, where 
\begin{equation*}
x_{max}=\left(\frac{2N(p-1)}{2(p+1)-N(p-1)}\right)^{\frac{1}{2}}{\|Q\|_{2}^{\frac{1}{s_{c}}}}=\|\nabla Q\|_{2}\| Q\|^{\frac{1-s_{c}}{s_{c}}}_{2}.
\end{equation*}
Moreover, we have
\begin{equation*}
h(x_{max})=\frac{s_{c}}{N}\left(\frac{2N(p-1)}{2(p+1)-N(p-1)}\right){\|Q\|_{2}^{\frac{2}{s_{c}}}}=E_{0,0}(Q)M(Q)^{\frac{(1-s_{c})}{s_{c}}}.
\end{equation*}
Notice that $h(x)>0$  for small enough $x>0$. It is not difficult to show that $h$ has a unique positive root, denoted by $x_{r}$,
\begin{equation*}
x_{r}=\left(\frac{(p-1)N}{4}\right)^{\frac{1}{s_{c}(p-1)}}\|\nabla Q\|_{2}\| Q\|^{\frac{1-s_{c}}{s_{c}}}_{2}.
\end{equation*}
On the other hand, we deduce from the condition \eqref{Fe1} that $h(\|\nabla u(t)\|_{2}\| u_{0}\|^{\frac{1-s_{c}}{s_{c}}}_{2})< 0$. Therefore, since $p>1+\frac{4}{N}$, this implies that
\begin{align*}
\|\nabla u(t)\|_{2}\| u_{0}\|^{\frac{1-s_{c}}{s_{c}}}_{2}&\geq \left(\frac{(p-1)N}{4}\right)^{\frac{1}{s_{c}(p-1)}}\|\nabla Q\|_{2}\| Q\|^{\frac{1-s_{c}}{s_{c}}}_{2}
\end{align*}
for every $t$ in the existence time, which completes the proof of Lemma \ref{lemaf}.
 \end{proof}
Now we give the proof of Theorem \ref{SC}.
\begin{proof}[ \bf {Proof of Theorem \ref{SC}}]
Let $u\in C([0, T_{+}),\Sigma)$ be the solution of \eqref{GP} with initial data $u_{0}$.\\
(i)  Notice that by \eqref{Emii} with $a=2$ we infer that
\[  |l_{\Omega}(u(t))| \leq \frac{1}{4}|\Omega|^{2}\|xu(t)\|^{2}_{2}+\|\nabla u(t)\|^{2}_{2},\]
for any $t$ as long the solution exists. Suppose that $l=-\infty$. Then there exists a sequence of times $\left\{t_{n}\right\}^{\infty}_{n=1}$ such that $\lim_{n\rightarrow \infty}|l_{\Omega}(u(t_{n}))|=\infty$. Thus, by the inequality above we see that
\[  \lim_{n\rightarrow \infty}\left[\frac{1}{4}|\Omega|^{2}\|xu(t_{n})\|^{2}_{2}+\|\nabla u(t_{n})\|^{2}_{2}   \right]=\infty.\]  
Assume by contradiction that there exists $C>0$ such that $\|\nabla u(t_{n})\|^{2}_{2}\leq C$ for all $n$. By conservation of energy and 
\eqref{Emii} we see that for $a>0$,
\[ \left(\frac{\gamma^{2}}{2}-\frac{1}{2a}|\Omega|^{2}\right) \|x u(t_{n})\|^{2}_{2}\leq C+ E_{\Omega}(u_{0})\quad \text{for all $n$,}\]
which is an absurd.  Therefore,  $\lim_{n\rightarrow\infty}\|\nabla u(t_{n})\|^{2}_{2}=\infty$. By the local theory, after extracting a subsequence, we have that $t_{n}\rightarrow T_{+}$.\\
Statements (ii) and (iii) are an immediate consequence of Lemma \ref{lemaf}. This completes the proof of theorem.
\end{proof}

%%%%%%%%%%%%%%%%%%%%%%%%%%%%%%%%%%%%%%%%%%%%%%%%%%%%%%%%%%%%%%%%%%%%%%%%%%%%%%%%%%%%%%%%%%%%%%%%%%%%%%%

\section{Existence and characterization of ground states}\label{S:2}

In this section we give the proof of the existence of ground states given in Proposition \ref{GNe}.  We define
\begin{align*}
\begin{split}
d(\omega)={\inf}\left\{S_{\omega}(u):\, u\in \Sigma \setminus  \left\{0 \right\},  I_{\omega}(u)=0\right\},\\
\mathcal{M}_{\omega}=\bigl\{ \varphi\in \Sigma: S_{\omega}(\varphi)=d(\omega),\quad  I_{\omega}(u)=0\bigl\}.
\end{split}
\end{align*}
By using the fact that $\lambda_{0}$ the smallest eigenvalue of the Schr\"odinger operator $R_{\Omega}=-\Delta +V(x)+2L_{\Omega}$ (see \eqref{impr}), we infer that $\mathfrak{t}[u]$ is bounded from below and $\mathfrak{t}[u]\geq -\lambda_{0}\|u\|^{2}_{2}$. Notice that $\sqrt{\mathfrak{t}[u]+\omega \|u\|^{2}_{2}}$ define a norm in the space $\Sigma$ for $\omega>\lambda_{0}$.
We have the following result.
\begin{lemma}\label{EEn}
Let $\omega>\lambda_{0}$. For $|\Omega|<\gamma$ we have the equivalence of norms 
\begin{equation*}
\sqrt{\mathfrak{t}[u]+\omega \|u\|^{2}_{2}}\cong \|u\|_{\Sigma}.
\end{equation*}
\end{lemma}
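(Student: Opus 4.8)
The plan is to prove the two-sided estimate $c\|u\|^{2}_{\Sigma}\leq \mathfrak{t}[u]+\omega\|u\|^{2}_{2}\leq C\|u\|^{2}_{\Sigma}$ for constants $0<c\leq C$ depending only on $N$, $\gamma$, $\Lambda:=\max_{1\leq j\leq N}\gamma^{2}_{j}$, $|\Omega|$ and $\omega$; taking square roots then yields the claimed equivalence. The upper bound is the routine direction. First I would note that $\int_{\mathbb{R}^{N}}V(x)|u|^{2}dx=\sum_{j}\gamma_{j}^{2}\int x_{j}^{2}|u|^{2}\leq \Lambda\|xu\|^{2}_{2}$, and that by \eqref{Emii} (with $a=1$, say) one has $|2l_{\Omega}(u)|\leq |\Omega|^{2}\|xu\|^{2}_{2}+\|\nabla u\|^{2}_{2}$. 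Inserting these into the definition of $\mathfrak{t}[u]$ gives $\mathfrak{t}[u]+\omega\|u\|^{2}_{2}\leq 2\|\nabla u\|^{2}_{2}+(\Lambda+|\Omega|^{2})\|xu\|^{2}_{2}+\omega\|u\|^{2}_{2}$, which is dominated by $C\|u\|^{2}_{\Sigma}$ with $C=\max\{2,\Lambda+|\Omega|^{2},\omega\}$.

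The substance of the lemma is the lower bound, and this is where the hypothesis $|\Omega|<\gamma$ enters. Using $\int V(x)|u|^{2}dx\geq\gamma^{2}\|xu\|^{2}_{2}$ together with the estimate $2l_{\Omega}(u)\geq -\frac{1}{a}|\Omega|^{2}\|xu\|^{2}_{2}-a\|\nabla u\|^{2}_{2}$ coming from \eqref{Emii}, I obtain for every $a>0$
\[
\mathfrak{t}[u]\geq (1-a)\|\nabla u\|^{2}_{2}+\Bigl(\gamma^{2}-\tfrac{|\Omega|^{2}}{a}\Bigr)\|xu\|^{2}_{2}.
\]
The key observation is that both coefficients are simultaneously positive precisely when $\tfrac{|\Omega|^{2}}{\gamma^{2}}<a<1$, and such an $a$ exists exactly because $|\Omega|<\gamma$. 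Fixing one such $a$ and setting $\delta:=\min\{1-a,\ \gamma^{2}-\tfrac{|\Omega|^{2}}{a}\}>0$ yields the clean intermediate bound $\mathfrak{t}[u]\geq \delta\bigl(\|\nabla u\|^{2}_{2}+\|xu\|^{2}_{2}\bigr)$, controlling the gradient and the weighted part of the $\Sigma$-norm.

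It remains to recover the $\|u\|^{2}_{2}$-term, and here the frequency condition $\omega>\lambda_{0}$ is used. Recalling from \eqref{impr} that $\mathfrak{t}[u]\geq -\lambda_{0}\|u\|^{2}_{2}$, I would split $\mathfrak{t}[u]=\theta\,\mathfrak{t}[u]+(1-\theta)\,\mathfrak{t}[u]$ for $\theta\in(0,1)$, estimate the first summand by $\theta\delta\bigl(\|\nabla u\|^{2}_{2}+\|xu\|^{2}_{2}\bigr)$ and the second by $-(1-\theta)\lambda_{0}\|u\|^{2}_{2}$, obtaining
\[
\mathfrak{t}[u]+\omega\|u\|^{2}_{2}\geq \theta\delta\bigl(\|\nabla u\|^{2}_{2}+\|xu\|^{2}_{2}\bigr)+\bigl(\omega-(1-\theta)\lambda_{0}\bigr)\|u\|^{2}_{2}.
\]
At $\theta=0$ the coefficient $\omega-(1-\theta)\lambda_{0}$ equals $\omega-\lambda_{0}>0$, so by continuity there is $\theta\in(0,1)$ with $\kappa:=\omega-(1-\theta)\lambda_{0}>0$; choosing such $\theta$ gives the lower bound with $c=\min\{\theta\delta,\kappa\}>0$. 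The main obstacle is the bookkeeping of constants in this last step: one must choose $a$ to beat the rotation (forcing $|\Omega|<\gamma$) and, independently, choose $\theta$ to beat a possibly nonpositive $\lambda_{0}$ (forcing $\omega>\lambda_{0}$), and verify that these two choices can be made without conflict — which they can, since $a$ governs only the gradient/position coefficients while $\theta$ is then tuned afterward against the fixed $\delta$ and $\lambda_{0}$.
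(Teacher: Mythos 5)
Your proof is correct and follows essentially the same route as the paper: both directions rest on the Young-type bound \eqref{Emii} with $a\in(|\Omega|^{2}/\gamma^{2},1)$ (which is where $|\Omega|<\gamma$ enters) together with the spectral bound $\mathfrak{t}[u]\geq-\lambda_{0}\|u\|_{2}^{2}$ from \eqref{impr} (which is where $\omega>\lambda_{0}$ enters). The only difference is presentational: the paper phrases the lower bound as a sequential argument ($\mathfrak{t}[u_{n}]+\omega\|u_{n}\|_{2}^{2}\to 0$ implies $\|u_{n}\|_{\Sigma}\to 0$), whereas your $\theta$-splitting produces the explicit two-sided constants directly.
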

\begin{proof}
By the Young's inequality we infer that for any $a>0$
\begin{equation}\label{dx1}
|l_{\Omega}(v)|\leq \frac{a}{2}\|\nabla v\|^{2}_{2}+\frac{|\Omega|^{2}}{2a}\|x v\|^{2}_{2}.
\end{equation}
By using \eqref{dx1}, a simple calculation shows that there exists $C_{max}>0$ such that $\sqrt{\mathfrak{t}[u]+\omega \|u\|^{2}_{2}}\leq C_{max}\|u\|_{\Sigma}$.  On the other hand, suppose that $\mathfrak{t}[u_{n}]+\omega \|u_{n}\|^{2}_{2}\rightarrow 0$ as $n\rightarrow 0$.  From \eqref{impr} we infer that $\|u_{n}\|^{2}_{2}\rightarrow 0$ as $n\rightarrow\infty$. This implies that $\mathfrak{t}[u_{n}]\rightarrow 0$. Now, since $|\Omega|<\gamma$,   by \eqref{dx1} we see that there exists $C_{min}>0$ such that
\begin{equation*}
\mathfrak{t}[u_{n}]\geq C_{min}\left\{\|\nabla u_{n}\|^{2}_{2}+\|x u_{n}\|^{2}_{2}\right\}.
\end{equation*}
Therefore $\|u_{n}\|^{2}_{\Sigma}\rightarrow 0$ as $n\rightarrow\infty$, which completes the proof.
\end{proof}

\begin{lemma}\label{v11}
If $\omega> \lambda_{0}$, then the quantity $d(\omega)$ is positive.
\end{lemma}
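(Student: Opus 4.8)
The plan is to reduce the positivity of $d(\omega)$ to a uniform lower bound on the equivalent norm along the constraint manifold $\{u\in\Sigma\setminus\{0\}:I_{\omega}(u)=0\}$. First I would exploit the constraint, which reads $\mathfrak{t}[u]+\omega\|u\|_{2}^{2}=2\|u\|_{p+1}^{p+1}$, to eliminate the nonlinear term from $S_{\omega}$. Substituting this relation into $S_{\omega}(u)=\frac{1}{2}\big(\mathfrak{t}[u]+\omega\|u\|_{2}^{2}\big)-\frac{2}{p+1}\|u\|_{p+1}^{p+1}$ gives, for every admissible $u$,
\[
S_{\omega}(u)=\frac{p-1}{2(p+1)}\big(\mathfrak{t}[u]+\omega\|u\|_{2}^{2}\big).
\]
Since $p>1$, the prefactor is positive, so positivity of $d(\omega)$ will follow once I show that $\mathfrak{t}[u]+\omega\|u\|_{2}^{2}$ is bounded below by a positive constant uniformly over the constraint set.

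To obtain that bound I would combine Lemma \ref{EEn} with a Sobolev embedding. By Lemma \ref{EEn}, for $\omega>\lambda_{0}$ and $|\Omega|<\gamma$ the quantity $\mathfrak{t}[u]+\omega\|u\|_{2}^{2}$ is comparable to $\|u\|_{\Sigma}^{2}$; moreover, since $1<p<2^{\ast}$, the embedding $\Sigma\hookrightarrow L^{p+1}(\mathbb{R}^{N})$ is continuous, so there is $C>0$ with $\|u\|_{p+1}^{p+1}\le C\big(\mathfrak{t}[u]+\omega\|u\|_{2}^{2}\big)^{(p+1)/2}$. Writing $A:=\mathfrak{t}[u]+\omega\|u\|_{2}^{2}$, which is strictly positive because it is the square of a norm of the nonzero element $u$, and inserting this estimate into the constraint $A=2\|u\|_{p+1}^{p+1}$, I get $A\le 2C\,A^{(p+1)/2}$, whence $A^{(p-1)/2}\ge(2C)^{-1}$ and therefore $A\ge(2C)^{-2/(p-1)}$.

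Combining the two displays yields $S_{\omega}(u)\ge \frac{p-1}{2(p+1)}(2C)^{-2/(p-1)}>0$ for every $u$ in the constraint set, and taking the infimum gives $d(\omega)>0$. The only point requiring genuine care is the continuity of the embedding $\Sigma\hookrightarrow L^{p+1}$ up to the endpoint: this holds because $p+1<\frac{2N}{N-2}$ precisely when $p<2^{\ast}$ in dimension $N=3$, while for $N=2$ every finite exponent is admissible, so the constant $C$ is finite throughout the range. I expect this Sobolev/norm-equivalence step to be the main (though mild) obstacle, the rest being the elementary algebra on the constraint.
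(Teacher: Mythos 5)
Your proof is correct and is essentially the paper's own argument: both reduce $S_{\omega}$ on the constraint set to a positive multiple of $\mathfrak{t}[u]+\omega\|u\|_{2}^{2}=2\|u\|_{p+1}^{p+1}$, and both obtain the uniform positive lower bound from the self-improving inequality $A\le CA^{(p+1)/2}$ furnished by the embedding $\Sigma\hookrightarrow L^{p+1}$ together with the norm equivalence of Lemma \ref{EEn}. The only cosmetic difference is that you bound $A=\mathfrak{t}[u]+\omega\|u\|_{2}^{2}$ while the paper bounds $\|u\|_{p+1}^{p+1}$, which coincide up to a factor of $2$ on the constraint set.
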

\begin{proof} Let $u\in \Sigma$ be such that $I_{\omega}(u)=0$. Since $I_{\omega}(u)=0$, we infer that
\begin{equation*}
\|u\|^{p+1}_{p+1}\leq C\|u\|^{{p+1}}_{\Sigma}\leq C(\mathfrak{t}[u]+\omega \|u\|^{2}_{2})^{\frac{p+1}{2}}=C\left(\|u\|^{p+1}_{p+1}\right)^{\frac{p+1}{2}}.
\end{equation*}
This implies that
\begin{equation*}
\|u\|^{p+1}_{p+1}\geq \left(\frac{1}{C}\right)^{\frac{2}{p-1}}>0.
\end{equation*}
Therefore
\begin{equation*}
S_{\omega}(u)=\frac{1}{2}I_{\omega}(u)+\frac{p-1}{p+1}\|u\|^{p+1}_{p+1}=\frac{p-1}{p+1}\|u\|^{p+1}_{p+1}\geq\frac{p-1}{p+1}\left(\frac{1}{C}\right)^{\frac{2}{p-1}}>0.
\end{equation*}
Taking the infimum, we get $d(\omega)>0$. 
\end{proof}

\begin{lemma}\label{v22}
Let $\omega> \lambda_{0}$. The set $\mathcal{M}_{\omega}$ is non-empty.
\end{lemma}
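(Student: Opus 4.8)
The plan is to attain the infimum defining $d(\omega)$ by the direct method of the calculus of variations, the decisive ingredient being the compactness of the embedding $\Sigma\hookrightarrow L^{p+1}(\mathbb{R}^{N})$ supplied by the confining harmonic potential $V$. Throughout I will use the algebraic identity already exploited in Lemma \ref{v11}, namely $S_{\omega}(u)=\frac{1}{2}I_{\omega}(u)+\frac{p-1}{p+1}\|u\|^{p+1}_{p+1}$.

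First I would fix a minimizing sequence. Since $d(\omega)>0$ by Lemma \ref{v11}, choose $\{u_{n}\}\subset\Sigma\setminus\{0\}$ with $I_{\omega}(u_{n})=0$ and $S_{\omega}(u_{n})\to d(\omega)$. The identity above gives $S_{\omega}(u_{n})=\frac{p-1}{p+1}\|u_{n}\|^{p+1}_{p+1}$, so $\|u_{n}\|^{p+1}_{p+1}$ is bounded, and the constraint $I_{\omega}(u_{n})=0$ then yields $\mathfrak{t}[u_{n}]+\omega\|u_{n}\|^{2}_{2}=2\|u_{n}\|^{p+1}_{p+1}$ bounded as well. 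By Lemma \ref{EEn} the sequence is bounded in $\Sigma$, so after passing to a subsequence $u_{n}\rightharpoonup\varphi$ weakly in $\Sigma$.

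Next I would pass to the limit. Because $\Sigma$ embeds compactly into $L^{p+1}$ for $p+1<2^{\ast}$, one has $u_{n}\to\varphi$ strongly in $L^{p+1}$; hence $\|\varphi\|^{p+1}_{p+1}=\lim\|u_{n}\|^{p+1}_{p+1}=\frac{p+1}{p-1}d(\omega)>0$, so in particular $\varphi\neq 0$ and $d(\omega)=\frac{p-1}{p+1}\|\varphi\|^{p+1}_{p+1}$. By weak lower semicontinuity of the equivalent norm $\sqrt{\mathfrak{t}[\,\cdot\,]+\omega\|\cdot\|^{2}_{2}}$ (Lemma \ref{EEn}) together with $\mathfrak{t}[u_{n}]+\omega\|u_{n}\|^{2}_{2}=2\|u_{n}\|^{p+1}_{p+1}\to 2\|\varphi\|^{p+1}_{p+1}$, I obtain $\mathfrak{t}[\varphi]+\omega\|\varphi\|^{2}_{2}\leq 2\|\varphi\|^{p+1}_{p+1}$, that is $I_{\omega}(\varphi)\leq 0$.

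Finally I would rule out strict inequality by a scaling argument. Writing $A:=\mathfrak{t}[\varphi]+\omega\|\varphi\|^{2}_{2}>0$ and $B:=2\|\varphi\|^{p+1}_{p+1}>0$, suppose $A<B$ and set $\theta=(A/B)^{1/(p-1)}\in(0,1)$; then $I_{\omega}(\theta\varphi)=\theta^{2}A-\theta^{p+1}B=0$ while $S_{\omega}(\theta\varphi)=\frac{p-1}{p+1}\theta^{p+1}\|\varphi\|^{p+1}_{p+1}<\frac{p-1}{p+1}\|\varphi\|^{p+1}_{p+1}=d(\omega)$, contradicting the minimality in the definition of $d(\omega)$. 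Therefore $I_{\omega}(\varphi)=0$, and then $S_{\omega}(\varphi)=\frac{p-1}{p+1}\|\varphi\|^{p+1}_{p+1}=d(\omega)$, so that $\varphi\in\mathcal{M}_{\omega}$ and the set is non-empty. I expect the main obstacle to be the compactness step: one must invoke, and in this setting justify via the trapping potential, the compact embedding $\Sigma\hookrightarrow L^{p+1}$, and simultaneously guarantee that the weak limit $\varphi$ does not vanish; the nonvanishing is exactly what the strong $L^{p+1}$ convergence and the lower bound $d(\omega)>0$ provide.
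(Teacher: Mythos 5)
Your proof is correct and follows essentially the same route as the paper's: a minimizing sequence made bounded via the identity $S_{\omega}=\frac{1}{2}I_{\omega}+\frac{p-1}{p+1}\|\cdot\|^{p+1}_{p+1}$, the compact embedding $\Sigma\hookrightarrow L^{p+1}$ to get strong $L^{p+1}$ convergence and nonvanishing of the weak limit, weak lower semicontinuity to get $I_{\omega}(\varphi)\leq 0$, and the same rescaling argument (your $\theta$ is the paper's $\kappa_{0}$) to exclude $I_{\omega}(\varphi)<0$. No gaps.
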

\begin{proof} 
Let $\left\{u_{n}\right\}^{\infty}_{n=1}$ be  a minimizing sequence of $d(\omega)$. Since $S_{\omega}(u_{n})=\frac{1}{2}I_{\omega}(u_{n})+\frac{p-1}{p+1}\|u\|^{p+1}_{p+1}\rightarrow d(\omega)$ as $n$ goes to $\infty$, we infer that $\|u_{n}\|^{p+1}_{p+1}$ is bounded. Thus, from $I_{\omega}(u_{n})=0$
we obtain that $\mathfrak{t}[u_{n}]+\omega \|u_{n}\|^{2}_{2}$ is bounded in $\Sigma$. Therefore, there exists $u\in \Sigma$ such that, up to sequence,  $u_{n}\rightharpoonup u_{0}$ weakly in $\Sigma$ and 
\begin{equation}\label{sli}
\mathfrak{t}[u_{0}]+\omega \|u_{0}\|^{2}_{2}\leq\liminf_{n\rightarrow \infty}\left\{\mathfrak{t}[u_{n}]+\omega \|u_{n}\|^{2}_{2}\right\}.
\end{equation}
Now, since $\Sigma\hookrightarrow L^{p+1}$ is compact for $1\leq p<2^{\ast}$, we have $u_{n}\rightarrow u_{0}$ strongly in $L^{p+1}$. By \eqref{sli}, this implies 
\begin{equation*}
I_{\omega}(u_{0})\leq \liminf_{n\rightarrow \infty}\left\{  \mathfrak{t}[u_{n}]+\omega \|u_{n}\|^{2}_{2}-2\|u_{n}\|^{p+1}_{p+1}\right\}= \liminf_{n\rightarrow \infty}I_{\omega}(u_{n})=0
\end{equation*}
and we  also have
\begin{equation*}
d(\omega)=\lim_{n\rightarrow\infty}S_{\omega}(u_{n})=\lim_{n\rightarrow\infty}\frac{p-1}{p+1}\|u_{n}\|^{p+1}_{p+1}=\frac{p-1}{p+1}\|u_{0}\|^{p+1}_{p+1}.
\end{equation*}
We claim that $u_{0}\in \mathcal{M}_{\omega}$. To show this we only need to show that $I_{\omega}(u_{0})=0$. To see this, suppose that $I_{\omega}(u_{0})<0$.  For $\kappa>0$ we see that
\begin{equation*}
\kappa^{-2}I_{\omega}(\kappa u_{0})=\mathfrak{t}[u_{0}]+\omega \|u_{0}\|^{2}_{2}-2\kappa^{p-1}\|u_{0}\|^{p+1}_{p+1}.
\end{equation*}
A simple calculation shows that the only solution to the equation  $\kappa^{-2}I_{\omega}(\kappa u_{0})=0$ is
\begin{equation*}
\kappa_{0}=\left(\frac{\mathfrak{t}[u_{0}]+\omega \|u_{0}\|^{2}_{2}}{2\|u_{0}\|^{p+1}_{p+1}}\right)^{\frac{1}{p-1}}.
\end{equation*}
Notice that $0<\kappa_{0}<1$. Now, since $I_{\omega}(\kappa_{0}u_{0})=0$,  by definition of $d(\omega)$ we see that
\begin{equation*}
\frac{p-1}{p+1}\|u_{0}\|^{p+1}_{p+1}=d(\omega)\leq \frac{p-1}{p+1}\|\kappa_{0}u_{0}\|^{p+1}_{p+1}=\kappa^{p+1}_{0}\frac{p-1}{p+1}\|u_{0}\|^{p+1}_{p+1}<\frac{p-1}{p+1}\|u_{0}\|^{p+1}_{p+1},
\end{equation*}
which is a contradiction. Therefore $d(\omega)=S_{\omega}(u_{0})$ and $I_{\omega}(u_{0})=0$; that is, $u_{0}\in \mathcal{M}_{\omega}$. This completes the proof of the lemma.
\end{proof}

\begin{lemma}\label{v33}
If $\omega> \lambda_{0}$, then $\mathcal{G}_{\omega}=\mathcal{M}_{\omega}$. 
\end{lemma}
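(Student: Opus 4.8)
The plan is to prove the two inclusions $\mathcal{M}_{\omega}\subseteq \mathcal{G}_{\omega}$ and $\mathcal{G}_{\omega}\subseteq \mathcal{M}_{\omega}$ separately, exploiting the fact that $I_{\omega}$ is precisely the Nehari functional associated with $S_{\omega}$, namely $I_{\omega}(u)=\langle S'_{\omega}(u),u\rangle$. The essential observation is that every critical point of $S_{\omega}$ automatically lies on the constraint $\{I_{\omega}=0\}$, so that $d(\omega)$ is a lower bound for $S_{\omega}$ on $\mathcal{A}_{\omega}$; the reverse direction requires showing that a constrained minimizer is in fact a free critical point, which is where the only real work lies.

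For the inclusion $\mathcal{M}_{\omega}\subseteq \mathcal{G}_{\omega}$, I would take $\varphi\in\mathcal{M}_{\omega}$ and apply the Lagrange multiplier theorem: since $\varphi$ minimizes $S_{\omega}$ under the single constraint $I_{\omega}(u)=0$, there is $\mu\in\mathbb{R}$ with $S'_{\omega}(\varphi)=\mu\,I'_{\omega}(\varphi)$, provided $I'_{\omega}(\varphi)\neq 0$. The crux is to show $\mu=0$. Pairing the Euler--Lagrange equation with $\varphi$ and using $I_{\omega}(\varphi)=\langle S'_{\omega}(\varphi),\varphi\rangle=0$ reduces the problem to evaluating $\langle I'_{\omega}(\varphi),\varphi\rangle$. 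A direct differentiation gives $\langle I'_{\omega}(\varphi),\varphi\rangle = 2\bigl(\mathfrak{t}[\varphi]+\omega\|\varphi\|_{2}^{2}\bigr)-2(p+1)\|\varphi\|_{p+1}^{p+1}$, and substituting the constraint $\mathfrak{t}[\varphi]+\omega\|\varphi\|_{2}^{2}=2\|\varphi\|_{p+1}^{p+1}$ collapses this to $-2(p-1)\|\varphi\|_{p+1}^{p+1}$. Since $d(\omega)>0$, the argument of Lemma \ref{v11} forces $\|\varphi\|_{p+1}>0$, so this quantity is strictly negative (using $p>1$); hence $\mu\langle I'_{\omega}(\varphi),\varphi\rangle=\langle S'_{\omega}(\varphi),\varphi\rangle=0$ yields $\mu=0$ and therefore $S'_{\omega}(\varphi)=0$, i.e. $\varphi\in\mathcal{A}_{\omega}$. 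Finally, any $v\in\mathcal{A}_{\omega}$ satisfies $I_{\omega}(v)=\langle S'_{\omega}(v),v\rangle=0$, so $S_{\omega}(v)\geq d(\omega)=S_{\omega}(\varphi)$, proving that $\varphi$ minimizes $S_{\omega}$ over $\mathcal{A}_{\omega}$.

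For the converse $\mathcal{G}_{\omega}\subseteq\mathcal{M}_{\omega}$, I would take $\phi\in\mathcal{G}_{\omega}\subseteq\mathcal{A}_{\omega}$; then $I_{\omega}(\phi)=\langle S'_{\omega}(\phi),\phi\rangle=0$, so $\phi$ is admissible for $d(\omega)$ and $S_{\omega}(\phi)\geq d(\omega)$. Conversely, $\mathcal{M}_{\omega}$ is nonempty by Lemma \ref{v22}, and by the first inclusion any $\psi\in\mathcal{M}_{\omega}$ lies in $\mathcal{A}_{\omega}$ with $S_{\omega}(\psi)=d(\omega)$; since $\phi$ is a ground state, $S_{\omega}(\phi)\leq S_{\omega}(\psi)=d(\omega)$. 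Hence $S_{\omega}(\phi)=d(\omega)$ and $I_{\omega}(\phi)=0$, i.e. $\phi\in\mathcal{M}_{\omega}$. I expect the only delicate point to be the vanishing of the Lagrange multiplier: this rests entirely on the strict sign of $\langle I'_{\omega}(\varphi),\varphi\rangle$, which in turn needs the nontriviality $\|\varphi\|_{p+1}>0$ guaranteed by Lemma \ref{v11} together with $p>1$; the same computation also shows $I'_{\omega}(\varphi)\neq 0$, so that the multiplier rule is legitimately applicable.
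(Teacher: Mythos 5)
Your proposal is correct and follows essentially the same route as the paper: the Lagrange multiplier is killed by pairing with $\varphi$ and computing $\langle I'_{\omega}(\varphi),\varphi\rangle=-2(p-1)\|\varphi\|_{p+1}^{p+1}<0$, and the reverse inclusion uses the nonemptiness of $\mathcal{M}_{\omega}$ together with the first inclusion. Your additional remark that the same sign computation justifies $I'_{\omega}(\varphi)\neq 0$ (so the multiplier rule applies) is a small point the paper leaves implicit, but it is not a different argument.
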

\begin{proof} 
First we show that we have $\mathcal{M}_{\omega}\subset \mathcal{G}_{\omega}$.  Let $\varphi\in \mathcal{M}_{\omega}$. Then there exists a Lagrange multiplier $\kappa\in \mathbb{R}$ such that $S^{\prime}_{\omega}(\varphi)=\kappa I^{\prime}_{\omega}(\varphi)$. A simple calculation shows that
\begin{equation*}
0=I_{\omega}(\varphi)=\left\langle S^{\prime}_{\omega}(\varphi),\varphi\right\rangle=\kappa \left\langle I^{\prime}_{\omega}(\varphi),\varphi\right\rangle.
\end{equation*}
Moreover, since $\mathfrak{t}[\varphi]+\omega \|\varphi\|^{2}_{2}=2\|\varphi\|^{p+1}_{p+1}$, we get
\begin{equation*}
\left\langle I^{\prime}_{\omega}(\varphi),\varphi\right\rangle=2\mathfrak{t}[\varphi]+2\omega \|\varphi\|^{2}_{2}-2(p+1)\|\varphi\|^{p+1}_{p+1}
=-2(p-1)\|\varphi\|^{p+1}_{p+1}<0.
\end{equation*}
Therefore, $\kappa=0$. This implies that $\varphi$ satisfies the stationary problem \eqref{EE}; that is, $\varphi\in \mathcal{A}_{\omega}$.
Next if $v\in  \mathcal{A}_{\omega}$, then $I_{\omega}(v)=\left\langle S^{\prime}_{\omega}(v),v\right\rangle=0$ and, since $\varphi\in \mathcal{M}_{\omega}$, we infer that $S_{\omega}(\varphi)\leq S_{\omega}(v)$; that is, 
\begin{equation*}
S_{\omega}(\varphi)\leq S_{\omega}(v)\quad \text{for all $v\in  \mathcal{A}_{\omega}$.}
\end{equation*}
Hence $\varphi\in \mathcal{G}_{\omega}$ and $\mathcal{M}_{\omega}\subset \mathcal{G}_{\omega}$.  In particular, $\mathcal{G}_{\omega}$ is not-empty. On the other hand,  let $\varphi\in \mathcal{G}_{\omega}$. Since $\mathcal{G}_{\omega}\subset \mathcal{A}_{\omega}$ we see that 
$I_{\omega}(\varphi)=0$. Moreover, by using the fact that $\mathcal{M}_{\omega}\subset \mathcal{G}_{\omega}$, we infer that $d(\omega)=S_{\omega}(\varphi)$. Thus, $\varphi\in \mathcal{M}_{\omega}$, which completes the proof.
\end{proof}

\begin{proof}[ \bf {Proof of Proposition \ref{GNe}}]
The proof of Proposition \ref{GNe} is a consequence of the Lemmas \ref{v11}, \ref{v22} and \ref{v33}.
\end{proof}

%%%%%%%%%%%%%%%%%%%%%%%%%%%%%%%%%%%%%%%%%%%%%%%%%%%%%%%%%%%%%%%%%%%%%%%%%%%%%%%%%%%%%%%%%%%%%%%%%%%%%%%%%%

%%%%%%%%%%%%%%%%%%%%%%%%%%%%%%%%%%%%%%%%%%%%%%%%%%%%%%%%%%%%%%%%%%%%%%%%%%%%%%%%%%%%%%%%%%%%%%%%%%%%

\section{Instability of standing waves}\label{S:4}
Concerning the sufficient condition for instability, i.e., Theorem \ref{Ax1}, the proof  follows from exactly the same argument in \cite[Proposition 1.1]{FOIB2003} and we omit the details.  In order to prove Collorary \ref{Eis} we establish some notation and a lemma. We follow closely the approach of Fukuizumi and Ohta \cite{FOIB2003}.
Let $\phi_{\omega}\in  \mathcal{G}_{\omega}$. We set the rescaled function
\begin{equation}\label{REf}
\phi_{\omega}(x)=\omega^{\frac{1}{p-1}}\widetilde{\phi}_{\omega}(\sqrt{\omega}x)\quad \text{for $\omega>0$.}
\end{equation}
Since $[L_{\Omega}\phi_{\omega}](x)=\omega^{\frac{1}{p-1}}[L_{\Omega}\widetilde{\phi}_{\omega}](\sqrt{\omega}x)$
and $V(x)=\omega^{-1}V(\sqrt{\omega}x)$, by \eqref{EE}, it is not difficult to prove that $\widetilde{\phi}_{\omega}(x)$ satisfies the elliptic equation
\begin{equation}\label{Nie}
 -\Delta \varphi+\varphi+\omega^{-2}V(x)\varphi-2|\varphi|^{p-1}\varphi+2\omega^{-1}L_{\Omega}\varphi=0, \quad x\in\mathbb{R}^{N}.
\end{equation}
Moreover, as $l_{\Omega}(\phi_{\omega})=\omega^{\frac{2}{p-1}}\omega^{-\frac{N}{2}}l_{\Omega}(\widetilde{\phi}_{\omega})$,
we have
\begin{equation}\label{Eps1}
\frac{\int_{\mathbb{R}^{N}}V(x)|{\phi}_{\omega}(x)|^{2}\,dx+2l_{\Omega}({\phi}_{\omega})}{{ \|{\phi}_{\omega}\|^{p+1}_{p+1}}}=\frac{\omega^{-2}\int_{\mathbb{R}^{N}}V(x)|\widetilde{\phi}_{\omega}(x)|^{2}\,dx+2\omega^{-1}l_{\Omega}(\widetilde{\phi}_{\omega})}{ \|\widetilde{\phi}_{\omega}\|^{p+1}_{p+1}}.
\end{equation}
\begin{lemma}\label{Ls1} 
Let $\gamma>0$, $\eta>0$ and $\widetilde{\phi}_{\omega}(x)$ be the rescaled function given in \eqref{REf}. There is a $\epsilon>0$ small enough (depending only on $\eta$, $N$ and $p$) such that if ${|\Omega|^{2}}\leq\epsilon {\gamma^{2}}$, then there exists a sequence $\left\{\omega_{n}\right\}^{\infty}_{n=1}$ that satisfies
\begin{equation*}
\lim_{n\rightarrow\infty}[\omega_{n}^{-2}\int_{\mathbb{R}^{N}}V(x)|\widetilde{\phi}_{\omega_{n}}(x)|^{2}\,dx+2\omega_{n}^{-1}l_{\Omega}(\widetilde{\phi}_{\omega_{n}})]\leq
4\eta.
\end{equation*}
Moreover, $\omega_{n}\rightarrow \infty$ as $n\rightarrow \infty$. 
\end{lemma}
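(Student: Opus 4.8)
The plan is to establish the stronger quantitative statement that
\[
B(\omega):=\omega^{-2}\int_{\R^{N}}V(x)|\widetilde{\phi}_{\omega}(x)|^{2}\,dx+2\omega^{-1}l_{\Omega}(\widetilde{\phi}_{\omega})\longrightarrow 0\qquad\text{as }\omega\to\infty,
\]
after which the lemma follows at once by taking any sequence $\omega_{n}\to\infty$ (so the hypothesis $|\Omega|^{2}\le\epsilon\gamma^{2}$ and the slack $4\eta$ are automatically met). I abbreviate $G=\|\nabla\widetilde{\phi}_{\omega}\|_{2}^{2}$, $M=\|\widetilde{\phi}_{\omega}\|_{2}^{2}$, $L=\|\widetilde{\phi}_{\omega}\|_{p+1}^{p+1}$, $W=\int_{\R^{N}}V|\widetilde{\phi}_{\omega}|^{2}$, and I introduce the limiting problem obtained from \eqref{Nie} by discarding the vanishing terms $\omega^{-2}V$ and $\omega^{-1}L_{\Omega}$, namely $-\Delta\psi+\psi-2|\psi|^{p-1}\psi=0$, whose radial ground state I denote $\psi_{\infty}$ (a rescaling of the $Q$ in \eqref{Izx}). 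With $S_{\infty}(u)=\frac12\|\nabla u\|_{2}^{2}+\frac12\|u\|_{2}^{2}-\frac{2}{p+1}\|u\|_{p+1}^{p+1}$ and $I_{\infty}(u)=\|\nabla u\|_{2}^{2}+\|u\|_{2}^{2}-2\|u\|_{p+1}^{p+1}$, the limiting level is $d_{\infty}=\inf\{S_{\infty}(u):I_{\infty}(u)=0\}=\frac{p-1}{p+1}\|\psi_{\infty}\|_{p+1}^{p+1}=:\frac{p-1}{p+1}L_{\infty}$.

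The algebraic input is that $\widetilde{\phi}_{\omega}$ \emph{minimizes} the rescaled action
\[
\widetilde{S}_{\omega}(v)=\tfrac12\|\nabla v\|_{2}^{2}+\tfrac12\|v\|_{2}^{2}+\tfrac{\omega^{-2}}{2}\int_{\R^{N}}V|v|^{2}+\omega^{-1}l_{\Omega}(v)-\tfrac{2}{p+1}\|v\|_{p+1}^{p+1}
\]
over its Nehari set $\{\widetilde{I}_{\omega}=0\}$; this transfers from $\phi_{\omega}\in\mathcal{G}_{\omega}$ through the scaling \eqref{REf} and the variational characterization in Proposition \ref{GNe}. Testing \eqref{Nie} against $\widetilde{\phi}_{\omega}$ yields $\widetilde{I}_{\omega}(\widetilde{\phi}_{\omega})=G+M+\omega^{-2}W+2\omega^{-1}l_{\Omega}(\widetilde{\phi}_{\omega})-2L=0$, which I read as the two facts actually used,
\[
I_{\infty}(\widetilde{\phi}_{\omega})=-B(\omega),\qquad \widetilde{S}_{\omega}(\widetilde{\phi}_{\omega})=\widetilde{d}(\omega)=\tfrac{p-1}{p+1}L,
\]
where $\widetilde{d}(\omega)$ is the infimum. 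In particular $B(\omega)=2L-G-M<2L$, so $1-\frac{B(\omega)}{2L}>0$.

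Next I would bound $\widetilde{d}(\omega)$ from above by a competitor. Since $\psi_{\infty}$ is radial, $l_{\Omega}(\psi_{\infty})=0$, and $\psi_{\infty}\in\Sigma$ has finite weighted norm; choosing $\kappa_{\omega}>0$ with $\widetilde{I}_{\omega}(\kappa_{\omega}\psi_{\infty})=0$ gives $\kappa_{\omega}^{p-1}=1+\frac{\omega^{-2}\int V|\psi_{\infty}|^{2}}{2\|\psi_{\infty}\|_{p+1}^{p+1}}\to1$, hence
\[
\widetilde{d}(\omega)\le\widetilde{S}_{\omega}(\kappa_{\omega}\psi_{\infty})=\tfrac{p-1}{p+1}\kappa_{\omega}^{p+1}\|\psi_{\infty}\|_{p+1}^{p+1}\longrightarrow d_{\infty}.
\]
Therefore $L=\frac{p+1}{p-1}\widetilde{d}(\omega)\le L_{\infty}+o(1)$, which is all that is needed below (coercivity of $\widetilde{S}_{\omega}$, via $|\Omega|<\gamma$ and \eqref{Emii}, would in addition bound $G$ and $M$).

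Finally I would convert this one-sided bound into two-sided control of $B(\omega)$. If $B(\omega)\le0$ the inequality $B(\omega)\le4\eta$ is trivial. If $B(\omega)>0$, then $I_{\infty}(\widetilde{\phi}_{\omega})=-B(\omega)<0$, so there is $\lambda_{*}\in(0,1)$ with $\lambda_{*}^{p-1}=1-\frac{B(\omega)}{2L}$ and $I_{\infty}(\lambda_{*}\widetilde{\phi}_{\omega})=0$; by definition of $d_{\infty}$,
\[
d_{\infty}\le S_{\infty}(\lambda_{*}\widetilde{\phi}_{\omega})=\tfrac{p-1}{p+1}\lambda_{*}^{p+1}L,\qquad\text{that is}\qquad \Bigl(1-\tfrac{B(\omega)}{2L}\Bigr)^{\frac{p+1}{p-1}}L\ge L_{\infty}.
\]
Combined with $L\le L_{\infty}+o(1)$ this forces $1-\frac{B(\omega)}{2L}\ge1-o(1)$, whence $B(\omega)\le 2L\,o(1)=o(1)$. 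Either way $B(\omega)\to0$, proving the lemma. The main obstacle is the uniform upper bound $\widetilde{d}(\omega)\to d_{\infty}$, which hinges on $\widetilde{\phi}_{\omega}$ being a genuine minimizer (not merely a solution) as the trap $\omega^{-2}V$ and the rotation $\omega^{-1}L_{\Omega}$ degenerate; the passage from control of the action level to control of $B(\omega)$ is exactly the Nehari rescaling of $\widetilde{\phi}_{\omega}$ onto the limiting manifold carried out in the last step.
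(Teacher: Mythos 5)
Your proof is correct, and it takes a genuinely different (and in one respect stronger) route than the paper's. Both arguments share the upper bound $\|\widetilde{\phi}_{\omega}\|_{p+1}^{p+1}\le \|Q\|_{p+1}^{p+1}+o(1)$ obtained by testing the rescaled Nehari problem with a dilate of the limiting radial ground state (your $\kappa_{\omega}\psi_{\infty}$ is the paper's Step 1 with $\kappa Q$), and both exploit the identity $I_{0,1}(\widetilde{\phi}_{\omega})=-B(\omega)$ together with a rescaling onto the limiting Nehari manifold (your $\lambda_{*}$ is exactly the paper's $b_{\omega}$). The essential difference is in how the lower bound is extracted: the paper's Step 2 tests the limiting problem with $\kappa\widetilde{\phi}_{\omega}$ for $\kappa>1$ close to $1$, and to make $I_{0,1}(\kappa\widetilde{\phi}_{\omega})<0$ it must bound $-B(\omega)$ from above via Lemma \ref{Lesim}; this is precisely where the hypothesis $|\Omega|^{2}\le\epsilon\gamma^{2}$ enters. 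You bypass this entirely by splitting on the sign of $B(\omega)$: when $B(\omega)\le 0$ the desired upper bound is trivial, and when $B(\omega)>0$ the Nehari factor $\lambda_{*}$ is automatically in $(0,1)$, so the variational characterization of the limiting level applies directly and yields $B(\omega)\le 2L\,o(1)$. Consequently you obtain $\limsup_{\omega\to\infty}B(\omega)\le 0$ for the whole range $|\Omega|<\gamma$ (needed anyway for $\mathcal{G}_{\omega}\neq\emptyset$), along every sequence $\omega_{n}\to\infty$ --- strictly more than the stated lemma. Two caveats. First, your closing claim that ``$B(\omega)\to 0$'' is an overstatement: in the case $B(\omega)\le 0$ you only control $B$ from above, not from below; this does not affect the lemma, which only asserts an upper bound. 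Second, the paper's proof of Lemma \ref{Ls1} also establishes the two-sided estimate \eqref{claim1}, which is quoted later as \eqref{hc1} in the proof of Corollary \ref{Eis}; your argument recovers the lower half of \eqref{claim1} only when $B(\omega)\ge 0$, so if your proof were to replace the paper's wholesale, the paper's Step 2 (and hence the smallness of $|\Omega|$) would still be needed downstream. For the lemma as stated, your proposal is complete.
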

\begin{proof}
Let $Q$ be the unique positive ground state for \eqref{Izx}. It is well-known that 
\begin{equation}\label{CVQ1}
\|Q\|^{p+1}_{p+1}=\inf\bigl\{ \|v\|^{p+1}_{p+1}: v\in \Sigma\setminus  \left\{0 \right\}: {I}_{0,1}(v)\leq 0\bigl\},
\end{equation}
where
\begin{equation*}
{I}_{0,1}(v)=\int_{\mathbb{R}^{N}}|\nabla v|^{2}dx +\int_{\mathbb{R}^{N}}|v|^{2}dx-{2}\int_{\mathbb{R}^{N}}|v|^{p+1}dx.
\end{equation*}
By Proposition \ref{GNe}, it is not difficult to show that (see proof of Lemma 3.1 in \cite{FOIB2003})
\begin{equation*}
\|{\phi}_{\omega}\|^{p+1}_{p+1}=\inf\bigl\{ \|v\|^{p+1}_{p+1}: v\in \Sigma\setminus  \left\{0 \right\}:{I}_{\omega}(v)\leq 0\bigl\}.
\end{equation*}
Thus, by using \eqref{REf} we infer that
\begin{equation}\label{Vci}
\|\widetilde{\phi}_{\omega}\|^{p+1}_{p+1}=\inf\bigl\{ \|v\|^{p+1}_{p+1}: v\in \Sigma\setminus  \left\{0 \right\}: \tilde{I}_{\omega}(v)\leq 0\bigl\},
\end{equation}
where
\[\begin{split}
\tilde{I}_{\omega}(v)=\int_{\mathbb{R}^{N}}|\nabla v|^{2}dx+\omega^{-2}\int_{\mathbb{R}^{N}}V(x)|v(x)|^{2}\,dx  +\int_{\mathbb{R}^{N}}|v|^{2}dx\\
-{2}\int_{\mathbb{R}^{N}}|v|^{p+1}dx+2\omega^{-1}l_{\Omega}(v).
\end{split}\]

Let $\eta>0$ small. We assume that  ${|\Omega|^{2}}\leq\epsilon {\gamma^{2}}$ for some $\epsilon>0$ small enough to be choose later (depending only on $\eta$, $N$ and $p$).

We claim that there exist $\epsilon$ small enough (depending only on $\eta$, $N$ and $p$) and $\omega^{\ast}$
large enough (depending only on $\eta$) such that
\begin{equation}\label{claim1}
\|Q\|^{p+1}_{p+1}-\eta \leq\|\widetilde{\phi}_{\omega}\|^{p+1}_{p+1}\leq \|Q\|^{p+1}_{p+1}+\eta \quad \mbox{when $\omega\in (\omega^{\ast}, \infty)$}.
\end{equation}
We show this claim in two steps.\\
\textit{Step 1.} Let  $\kappa>1$ close to $1$. We show that there exists $\omega(\kappa)>0$ such that
\begin{equation}\label{laa1}
\|\widetilde{\phi}_{\omega}\|^{p+1}_{p+1}\leq \kappa^{p+1}\|Q\|^{p+1}_{p+1}\quad \text{for every $\omega\in (\omega(\kappa), \infty)$}.
\end{equation}
First, note that there exists $\omega(\kappa)>0$ such that  $\tilde{I}_{\omega}(\kappa Q)<0$ holds for all $\omega\in (\omega(\kappa), \infty)$. Indeed, since $Q$ is radial and ${I}_{0,1}(Q)=0$, it follows that
\begin{equation}\label{Cp}
\begin{split}
\kappa^{-2}\tilde{I}_{\omega}(\kappa Q)=-2(\kappa^{p-1}-1)\|Q\|^{p+1}_{p+1}+\omega^{-2}\int_{\mathbb{R}^{N}}V(x)|Q(x)|^{2}\,dx.
\end{split}
\end{equation}
Moreover, it is well known that $Q$ has an exponential decay at infinity, thus
\begin{equation}\label{Ed}
\lim_{\omega\rightarrow \infty}\omega^{-2}\int_{\mathbb{R}^{N}}V(x)|Q(x)|^{2}\,dx=0.
\end{equation}
Therefore, from \eqref{Cp} and \eqref{Ed}, we infer that  there exists $\omega(\kappa)>0$ such that  $\tilde{I}_{\omega}(\kappa Q)<0$ holds for all $\omega\in (\omega(\kappa), \infty)$. By the variational characterization \eqref{Vci} of $\widetilde{\phi}_{\omega}$,  we see that 
$\|\widetilde{\phi}_{\omega}\|^{p+1}_{p+1}\leq \kappa^{p+1}\|Q\|^{p+1}_{p+1}$  {when $\omega\in (\omega(\kappa), \infty)$}. In particular, for $\kappa=(1+\eta/\|Q\|^{p+1}_{p+1})^{1/(p+1)}$, we infer that 
\begin{equation}\label{DG1112}
\|\widetilde{\phi}_{\omega}\|^{p+1}_{p+1}\leq \|Q\|^{p+1}_{p+1}+\eta \quad \mbox{when $\omega\in (\omega(\eta), \infty)$}.
\end{equation}
\textit{Step 2.}
First, notice that
\[{I}_{0,1}(\kappa \widetilde{\phi}_{\omega})=\kappa^{2}[\tilde{I}_{\omega}( \widetilde{\phi}_{\omega})
-2(\kappa^{p-1}-1)-\omega^{-2}\int_{\mathbb{R}^{N}}V(x)|\widetilde{\phi}_{\omega}(x)|^{2}\,dx-2\omega^{-1}l_{\Omega}(\widetilde{\phi}_{\omega})].
\]
Therefore, since $\tilde{I}_{\omega}(\widetilde{\phi}_{\omega})=0$, it follows that
\[\begin{split}
\kappa^{-2}{I}_{0,1}(\kappa \widetilde{\phi}_{\omega})=-2(\kappa^{p-1}-1)\|\widetilde{\phi}_{\omega}\|^{p+1}_{p+1}-\omega^{-2}\int_{\mathbb{R}^{N}}V(x)|\widetilde{\phi}_{\omega}(x)|^{2}\,dx-2\omega^{-1}l_{\Omega}(\widetilde{\phi}_{\omega}).
\end{split}\]
Thus, from inequality \eqref{SnI} (see Lemma \ref{Lesim} below) we have
\begin{equation}\label{Er5}
\begin{split}
\kappa^{-2}{I}_{0,1}(\kappa \widetilde{\phi}_{\omega})\leq f\left(\frac{|\Omega|^{2}}{\gamma^{2}}\right)\|\widetilde{\phi}_{\omega}\|^{p+1}_{p+1},
\quad \mbox{for every $\omega>0$},
\end{split}
\end{equation}
where
\[
\begin{split}
f(x):=-2(\kappa^{p-1}-1)+x\left(\frac{N(p-1)}{p+1}\right)+\frac{2x}{1-x}.
\end{split}
\]
We recall that   ${|\Omega|^{2}}\leq\epsilon {\gamma^{2}}$. Thus, for $\kappa>1$, there exists $\epsilon>0$ (depending on $\kappa$, $N$ and $p$) small enough such that
$f\left(\frac{|\Omega|^{2}}{\gamma^{2}}\right)\leq f(\epsilon)<0$. From \eqref{Er5}, it follows that
\[{I}_{0,1}(\kappa \widetilde{\phi}_{\omega})<0 \quad \mbox{when $\omega>0$},\]
and, by the characterization variational \eqref{CVQ1} we obtain $\|Q\|^{p+1}_{p+1}\leq \kappa^{p+1}\|\widetilde{\phi}_{\omega}\|^{p+1}_{p+1}$
for every $\omega>0$. In particular, for $\kappa=(1+\eta/2\|Q\|^{p+1}_{p+1})^{1/(p+1)}$, we infer that there exists $\epsilon>0$ (depending only on $\eta$, $N$ and $p$)
such that 
\begin{equation}\label{DG2}
\|Q\|^{p+1}_{p+1}\leq \|\widetilde{\phi}_{\omega}\|^{p+1}_{p+1}+\eta \quad \mbox{when $\omega\in (\omega(\eta), \infty)$}.
\end{equation}
Here we have used that  $\|\widetilde{\phi}_{\omega}\|^{p+1}_{p+1}/ 2\|Q\|^{p+1}_{p+1}\leq 1$ {for every $\omega\in (\omega(\eta), \infty)$} (see \eqref{laa1}). Then the claim follows from \eqref{DG1112} and \eqref{DG2}. 

On the other hand, we set 
\begin{equation}\label{bdefij}
b^{p-1}_{\omega}:=1+ \frac{{I}_{0,1}(\widetilde{\phi}_{\omega})}{2\|\widetilde{\phi}_{\omega}\|^{p+1}_{{p+1}}}.
\end{equation}
It is not difficult to show that  ${I}_{0,1}(b_{\omega}\widetilde{\phi}_{\omega})=0$. We claim that there exists a subsequence $\left\{\widetilde{\phi}_{\omega_{n}} \right\}$, with $\omega_{n}\rightarrow\infty$ as $n\rightarrow\infty$, such that either $b_{\omega_{n}}\geq1$ for  $n\geq 1$ or $b_{\omega_{n}}\rightarrow1$ as $n\rightarrow\infty$.
Indeed, we have two possibilities:\\
(i) There exists a subsequence $\omega_{n}$ such that ${I}_{0,1}(\widetilde{\phi}_{\omega_{n}})>0$ for all $n\geq 1$. In this case,
$b_{\omega_{n}}\geq1$ for $n\geq1$.\\
(ii) If (i) is false, then there exists $\omega^{\ast}>0$ such that ${I}_{0,1}(\widetilde{\phi}_{\omega})\leq 0$  for every  $\omega>\omega^{\ast}$. In this case, by the variational characterization \eqref{CVQ1} we obtain that
\begin{equation}\label{Cvc1122}
\|Q\|^{p+1}_{p+1}\leq \|\widetilde{\phi}_{\omega}\|^{p+1}_{p+1}, \quad \text{for all $\omega>\omega^{\ast}$}.
\end{equation}
Moreover, from \textit{Step 1} above (in, particular by \eqref{laa1}) we see that 
\begin{equation}\label{limitun}
\lim_{\omega\rightarrow \infty} \|\widetilde{\phi}_{\omega}\|^{p+1}_{p+1}\leq \|Q\|^{p+1}_{p+1}.
\end{equation}
Combining \eqref{Cvc1122} and \eqref{limitun} we infer that
\[
 \|Q\|^{p+1}_{p+1}\leq \lim_{\omega\rightarrow \infty} \|\widetilde{\phi}_{\omega}\|^{p+1}_{p+1}
\leq  \|Q\|^{p+1}_{p+1},
\]
that is,
\begin{equation}\label{igualdad}
\lim_{\omega\rightarrow \infty} \|\widetilde{\phi}_{\omega}\|^{p+1}_{p+1}= \|Q\|^{p+1}_{p+1}.
\end{equation}
Now since ${I}_{0,1}(b_{\omega}\widetilde{\phi}_{\omega})=0$, it follows that by \eqref{CVQ1},
\[
\|Q\|^{p+1}_{p+1}\leq  \|b_{\omega} \widetilde{\phi}_{\omega}\|^{p+1}_{p+1}\leq b^{p+1}_{\omega}\|\widetilde{\phi}_{\omega}\|^{p+1}_{p+1}.
\]
Thus, by \eqref{igualdad} we have
\[
\liminf_{\omega\rightarrow \infty}b_{\omega}\geq \liminf_{\omega\rightarrow \infty}\frac{\|Q\|_{p+1}}{\|\widetilde{\phi}_{\omega}\|_{p+1}}=1.
\]
This implies by \eqref{bdefij},
\[
\liminf_{\omega\rightarrow \infty} {I}_{0,1}(\widetilde{\phi}_{\omega})=
\liminf_{\omega\rightarrow \infty} 2(b^{p-1}_{\omega}-1)
 \|\widetilde{\phi}_{\omega}\|^{p+1}_{p+1}\geq 0.
\]
Finally, as ${I}_{0,1}(\widetilde{\phi}_{\omega})\leq 0$  for all  $\omega>\omega^{\ast}$, the inequality above shows that there exists a subsequence $\omega_{n}$ such that
\[\lim_{n\rightarrow \infty}{I}_{0,1}(\widetilde{\phi}_{\omega_{n}})=0.\] 
Therefore, by using \eqref{bdefij} and \eqref{igualdad}, we get
\[
\lim_{n\rightarrow \infty}b_{\omega_{n}}=1.
\]
In any case, there exists a subsequence $\left\{\widetilde{\phi}_{\omega_{n}} \right\}$, with $\omega_{n}\rightarrow\infty$ as $n\rightarrow\infty$, such that either $b_{\omega_{n}}\geq1$ for $n\geq 1$ or $\lim_{n\rightarrow \infty}b_{\omega_{n}}=1$.
This proves the claim.

 Next, since  ${I}_{0,1}(Q)=0$, ${I}_{0,1}(b_{\omega_{n}}\widetilde{\phi}_{\omega_{n}})=0$ and \eqref{claim1}, we obtain
\[\begin{split}
\lim_{n\rightarrow\infty}[\|\nabla \widetilde{\phi}_{\omega_{n}}\|^{2}_{2}+ \|\widetilde{\phi}_{\omega_{n}}\|^{2}_{2}]
=2\lim_{n\rightarrow\infty}b_{\omega_{n}}^{p-1}\|\widetilde{\phi}_{\omega_{n}}\|^{p+1}_{p+1}\geq 2 \lim_{n\rightarrow\infty}\|\widetilde{\phi}_{\omega_{n}}\|^{p+1}_{p+1}\\
\geq  2[\|Q\|^{p+1}_{p+1}-\eta] 
=\|\nabla Q\|^{2}_{2}+ \|Q\|^{2}_{2}-2\eta.
\end{split}\]
Finally, since $\tilde{I}_{\omega}(\widetilde{\phi}_{\omega_{n}})=0$, by inequality above and \eqref{claim1} we have
\[\begin{split}
\lim_{n\rightarrow\infty}[\omega_{n}^{-2}\int_{\mathbb{R}^{N}}V(x)|\widetilde{\phi}_{\omega_{n}}(x)|^{2}\,dx+2\omega_{n}^{-1}l_{\Omega}(\widetilde{\phi}_{\omega_{n}})]
&\leq 2\|Q\|^{p+1}_{p+1}-\|\nabla Q\|^{2}_{2}- \|Q\|^{2}_{2}+4\eta\\
&= 4\eta.
\end{split}\]
 This completes the proof.
\end{proof}

\begin{lemma}\label{Lesim}
Let $\phi_{\omega}\in  \mathcal{G}_{\omega}$ and consider the rescaled function $\phi_{\omega}(x)=\omega^{\frac{1}{p-1}}\widetilde{\phi}_{\omega}(\sqrt{\omega}x)$. The following fact hold.\\
(i) We have the following Pohozaev identity
\begin{equation}\label{Ew1}
 \|\nabla \widetilde{\phi}_{\omega}\|^{2}_{2}-\omega^{-2}\int_{\mathbb{R}^{N}}V(x)|\widetilde{\phi}_{\omega}(x)|^{2}\,dx-\frac{N(p-1)}{p+1}\|\widetilde{\phi}_{\omega}\|^{p+1}_{p+1}=0.
\end{equation}
(ii) For all $\omega>0$,
\begin{equation}\label{SnI}
\begin{split}
 -\omega^{-2}\int_{\mathbb{R}^{N}}V(x)|\widetilde{\phi}_{\omega}(x)|^{2}\,dx-2\omega^{-1}l_{\Omega}(\widetilde{\phi}_{\omega})\leq\\
\frac{|\Omega|^{2}}{\gamma^{2}}
\left[ \frac{N(p-1)}{p+1}+\frac{2\gamma^{2}}{\gamma^{2}-|\Omega|^{2}}\right] \|\widetilde{\phi}_{\omega}\|^{p+1}_{L^{p+1}}.
\end{split}
\end{equation}
\end{lemma}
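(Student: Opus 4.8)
The plan is to read both identities off the rescaled equation \eqref{Nie} satisfied by $\widetilde{\phi}_{\omega}$, using systematically the pointwise bound $V(x)\geq\gamma^{2}|x|^{2}$ (valid since $\gamma=\min_{j}\gamma_{j}$), hence $\|x\widetilde{\phi}_{\omega}\|_{2}^{2}\leq\gamma^{-2}\int_{\mathbb{R}^{N}}V(x)|\widetilde{\phi}_{\omega}|^{2}\,dx$, together with the bound \eqref{Emii}. I abbreviate $\tau:=|\Omega|^{2}/\gamma^{2}\in(0,1)$, $K:=\|\nabla\widetilde{\phi}_{\omega}\|_{2}^{2}$, $A:=\omega^{-2}\int_{\mathbb{R}^{N}}V(x)|\widetilde{\phi}_{\omega}|^{2}\,dx\geq0$ and $P:=\|\widetilde{\phi}_{\omega}\|_{p+1}^{p+1}$. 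For part (i) I would derive \eqref{Ew1} from the $L^{2}$-preserving dilation $v^{\lambda}(x)=\lambda^{N/2}v(\lambda x)$: since $\widetilde{\phi}_{\omega}$ solves \eqref{Nie} it is a critical point of the corresponding action, so the derivative at $\lambda=1$ of this action along $\widetilde{\phi}_{\omega}^{\lambda}$ vanishes. Under this dilation the kinetic term scales like $\lambda^{2}$, the potential term like $\lambda^{-2}$ and the $L^{p+1}$ term like $\lambda^{N(p-1)/2}$, while both the mass and the angular momentum are invariant (equivalently $L_{\Omega}$ commutes with the generator $\tfrac{N}{2}+x\cdot\nabla$); thus these two terms drop out and the derivative gives precisely \eqref{Ew1}.

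For part (ii), note that the left-hand side of \eqref{SnI} equals $-A-2\omega^{-1}l_{\Omega}(\widetilde{\phi}_{\omega})$. I would first bound it by $\tau K$: doubling \eqref{Emii} with the free parameter chosen as $a=\omega\tau$ and inserting $\|x\widetilde{\phi}_{\omega}\|_{2}^{2}\leq\gamma^{-2}\omega^{2}A$ gives $2\omega^{-1}|l_{\Omega}(\widetilde{\phi}_{\omega})|\leq A+\tau K$, so the $A$-terms cancel and $-A-2\omega^{-1}l_{\Omega}(\widetilde{\phi}_{\omega})\leq\tau K$. By the Pohozaev identity \eqref{Ew1} from part (i) one has $K=A+\frac{N(p-1)}{p+1}P$, so it only remains to control $A$ by $P$.

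To bound $A$ I would pair \eqref{Nie} with $\widetilde{\phi}_{\omega}$, obtaining the Nehari-type identity $K+A+\|\widetilde{\phi}_{\omega}\|_{2}^{2}+2\omega^{-1}l_{\Omega}(\widetilde{\phi}_{\omega})=2P$; discarding $\|\widetilde{\phi}_{\omega}\|_{2}^{2}\geq0$ yields $K+A+2\omega^{-1}l_{\Omega}(\widetilde{\phi}_{\omega})\leq2P$. Applying \eqref{Emii} a second time, now with $a=\omega$, gives $2\omega^{-1}l_{\Omega}(\widetilde{\phi}_{\omega})\geq-(\tau A+K)$, hence $(1-\tau)A\leq2P$, i.e.\ $A\leq\frac{2}{1-\tau}P$. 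Combining the two steps, $\tau K=\tau A+\tau\frac{N(p-1)}{p+1}P\leq\frac{2\tau}{1-\tau}P+\tau\frac{N(p-1)}{p+1}P$, which is exactly the right-hand side of \eqref{SnI} once one rewrites $\tau=|\Omega|^{2}/\gamma^{2}$ and $\frac{2\tau}{1-\tau}=\frac{|\Omega|^{2}}{\gamma^{2}}\cdot\frac{2\gamma^{2}}{\gamma^{2}-|\Omega|^{2}}$.

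The main obstacle is obtaining the sharp constant $\frac{2\gamma^{2}}{\gamma^{2}-|\Omega|^{2}}$ uniformly for $|\Omega|<\gamma$. A single application of \eqref{Emii} with one optimized parameter only closes the estimate for $|\Omega|^{2}<\gamma^{2}/4$; the resolution is to use \eqref{Emii} twice, with the two distinct choices $a=\omega\tau$ and $a=\omega$, and to combine the Pohozaev identity with the Nehari identity — it is the latter that turns the leftover potential term $A$ into the factor $\frac{2}{1-\tau}$.
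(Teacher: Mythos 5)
Your proof is correct and follows essentially the same route as the paper: part (i) via the derivative at $\lambda=1$ of the rescaled action along the $L^{2}$-preserving dilation, and part (ii) by combining the bound $-A-2\omega^{-1}l_{\Omega}(\widetilde{\phi}_{\omega})\leq \tau\|\nabla\widetilde{\phi}_{\omega}\|_{2}^{2}$ (the paper's \eqref{Er1}, from \eqref{Emii} with $a=\omega|\Omega|^{2}/\gamma^{2}$), the Pohozaev identity \eqref{Ew1}, and the Nehari identity $\tilde{I}_{\omega}(\widetilde{\phi}_{\omega})=0$ together with \eqref{Emii} at $a=\omega$ to get $A\leq\frac{2}{1-\tau}P$ (the paper's \eqref{Ew2}). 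No gaps.
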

\begin{proof} 
 We set
\[\begin{split}
\widetilde{S}(\varphi):=\frac{1}{2}\|\nabla \varphi\|^{2}_{2}+\frac{1}{2}\| \varphi\|^{2}_{2}
+\frac{\omega^{-2}}{2}\int_{\mathbb{R}^{N}}V(x)|\varphi(x)|^{2}dx
-\frac{2}{p+1}\| \varphi\|^{p+1}_{{p+1}}+\omega^{-1}l_{\Omega}(\varphi).
\end{split}\]
Note that $\widetilde{S}^{\prime}(\varphi)=\mbox{LHS\eqref{Nie}}$. Since $\widetilde{\phi}_{\omega}$ is solution of \eqref{Nie} we see that
$\widetilde{S}^{\prime}(\widetilde{\phi}_{\omega})=0$ and
\begin{equation}\label{Tp1}
\left.\frac{\partial}{\partial \lambda} \widetilde{S}^{\prime}(\widetilde{\phi}^{\lambda}_{\omega})\right|_{\lambda=1}  
=\left\langle  \widetilde{S}^{\prime}(\widetilde{\phi}_{\omega}), \left.\frac{\partial}{\partial \lambda} \widetilde{\phi}^{\lambda}_{\omega}\right|_{\lambda=1}\right\rangle=0,
\end{equation}
where $\widetilde{\phi}^{\lambda}_{\omega}(x)=\lambda^{\frac{N}{2}}\widetilde{\phi}_{\omega}(\lambda x)$. By using the fact that
\[\begin{split}
\widetilde{S}(\widetilde{\phi}^{\lambda}_{\omega})=
\frac{\lambda^{2}}{2}\|\nabla \widetilde{\phi}_{\omega}\|^{2}_{{2}}+\frac{1}{2}\|\widetilde{\phi}_{\omega}\|^{2}_{{2}}
+\frac{\omega^{-2}\lambda^{-2}}{2}\int_{\mathbb{R}^{N}}V(x)|\widetilde{\phi}_{\omega}(x)|^{2}dx\\
-\frac{2\lambda^{\frac{N(p-1)}{2}}}{p+1}\| \widetilde{\phi}_{\omega}\|^{p+1}_{{p+1}}+\omega^{-1}l_{\Omega}(\widetilde{\phi}_{\omega})
\end{split}\]
we obtain 
\begin{equation}\label{Tph23}
 \left.\frac{\partial}{\partial \lambda} \widetilde{S}^{\prime}(\widetilde{\phi}^{\lambda}_{\omega})\right|_{\lambda=1}
=\mbox{LHS}\eqref{Ew1}.
\end{equation}
Combining \eqref{Tp1} and \eqref{Tph23} proves \eqref{Ew1}. This proves statement (i) in the lemma. To prove statement (ii) we first note that
\begin{equation}
 \label{Ew2}
 \omega^{-2}\int_{\mathbb{R}^{N}}V(x)|\widetilde{\phi}_{\omega}(x)|^{2}\,dx\leq \frac{2\gamma^{2}}{\gamma^{2}-|\Omega|^{2}}\| \widetilde{\phi}_{\omega}\|^{p+1}_{p+1}.
\end{equation}
Indeed, by \eqref{Emii} we obtain
\[\begin{split}
|l_{\Omega}(\widetilde{\phi}_{\omega})| &\leq \frac{\omega}{2}
|\Omega|^{2}\|x\widetilde{\phi}_{\omega}\|^{2}_{2}
+\frac{1}{2\omega}\|\nabla \widetilde{\phi}_{\omega}\|^{2}_{2}\\
&\leq \frac{\omega}{2}\frac{|\Omega|^{2}}{\gamma^{2}}
\int_{\mathbb{R}^{N}}V(x)|\widetilde{\phi}_{\omega}(x)|^{2}\,dx
+\frac{1}{2\omega}\|\nabla \widetilde{\phi}_{\omega}\|^{2}_{2}
\end{split}\]
Thus, since $\tilde{I}_{\omega}(\widetilde{\phi}_{\omega})=0$, by inequality above we infer that
\begin{align*}
\omega^{-2}\int_{\mathbb{R}^{N}}V(x)|\widetilde{\phi}_{\omega}(x)|^{2}dx&\leq 2\| \widetilde{\phi}_{\omega}\|^{p+1}_{p+1} -\|\nabla \widetilde{\phi}_{\omega}\|^{2}_{L^{2}}+2\omega^{-1}|l_{\Omega}(\widetilde{\phi}_{\omega})|\\
&\leq 2\| \widetilde{\phi}_{\omega}\|^{p+1}_{p+1}+\omega^{-2}\frac{|\Omega|^{2}}{\gamma^{2}}\int_{\mathbb{R}^{N}}V(x)|\widetilde{\phi}_{\omega}(x)|^{2}dx.
\end{align*} 
This inequality implies \eqref{Ew2}. On the other hand, notice that
\begin{equation}\label{Er1}
 -\omega^{-2}\int_{\mathbb{R}^{N}}V(x)|\widetilde{\phi}_{\omega}(x)|^{2}\,dx-2\omega^{-1}l_{\Omega}(\widetilde{\phi}_{\omega})\leq\frac{|\Omega|^{2}}{\gamma^{2}} \|\nabla \widetilde{\phi}_{\omega}\|^{2}_{2}.
\end{equation}
Indeed, the inequality is trivial if $l_{\Omega}(\widetilde{\phi}_{\omega})\geq 0$. Now if $l_{\Omega}(\widetilde{\phi}_{\omega})< 0$, then 
$-2\omega^{-1}l_{\Omega}(\widetilde{\phi}_{\omega})=2\omega^{-1}|l_{\Omega}(\widetilde{\phi}_{\omega})|$. Thus, the inequality \eqref{Er1} follows from inequality \eqref{Emii} with $a=\omega|\Omega|^{2}/\gamma^{2}$.

Finally, combining \eqref{Er1}, \eqref{Ew1} and \eqref{Ew2}  we get  \eqref{SnI}.
\end{proof}

\begin{proof}[ \bf {Proof of Corollary \ref{Eis}}] Set $\phi^{s}_{\omega}(x):=s^{\frac{N}{2}}\phi_{\omega}(sx)$. Some straightforward computations revel that
\begin{equation*}
E_{\Omega}(\phi^{s}_{\omega})=\frac{s^{2}}{2}\|\nabla \phi_{\omega}\|^{2}_{2}
+\frac{1}{2}\int_{ \mathbb{R}^{N}}\frac{V(x)}{s^{2}}|\phi_{\omega}(x)|^{2}\,dx-\frac{2s^{\frac{N(p-1)}{2}}}{p+1}\|\phi_{\omega}\|^{p+1}_{p+1}+l_{\Omega}(\phi_{\omega}).
\end{equation*}
Since $P(\phi_{\omega})=\partial_{s}S_{\omega}(\phi^{s}_{\omega})|_{s=1}=0$, it follows that
\[
\partial^{2}_{s}E_{\Omega}(\phi^{s}_{\omega})|_{s=1}=4\int_{ \mathbb{R}^{N}}{V(x)}|\phi_{\omega}(x)|^{2}\,dx- N\frac{p-1}{p+1}\left(\frac{N(p-1)}{2}-2\right)\|\phi_{\omega} \|^{p+1}_{p+1}.
\]
Thus, that the condition $\partial^{2}_{s}E_{\Omega}(\phi^{s}_{\omega_{n}})|_{s=1}<0$  is equivalent to 
\begin{equation}\label{Aesd}
4\frac{\int_{ \mathbb{R}^{N}}{V(x)}|\phi_{\omega}(x)|^{2}\,dx}{\|\phi_{\omega} \|^{p+1}_{p+1}}< N\frac{p-1}{p+1}\left(\frac{N(p-1)}{2}-2\right).
\end{equation}
We observe that since $p>1+\frac{4}{N}$, this implies that $\mbox{RHS\eqref{Aesd}}>0$. 

On the other hand, since $P(\phi_{\omega})=0$, we obtain
\begin{equation}\label{PHOq}
 \|\nabla{\phi}_{\omega}\|^{2}_{2}-\int_{\mathbb{R}^{N}}V(x)|{\phi}_{\omega}(x)|^{2}\,dx-\frac{N(p-1)}{p+1}\|{\phi}_{\omega}\|^{p+1}_{p+1}=0.
\end{equation}
Moreover, it is not difficult to show (see proof of \eqref{Ew2} in Lemma \ref{Lesim}) that
\begin{equation} \label{sdp}
\int_{\mathbb{R}^{N}}V(x)|{\phi}_{\omega}(x)|^{2}\,dx\leq \frac{2\gamma^{2}}{\gamma^{2}-|\Omega|^{2}}\|{\phi}_{\omega}\|^{p+1}_{p+1}.
\end{equation}
Combining \eqref{sdp} and the identity  \eqref{PHOq} we obtain $\|\nabla \phi_{\omega}\|^{2}_{2}\leq \beta\|\phi_{\omega} \|^{p+1}_{p+1}$, where
\[\beta:=  \frac{2\gamma^{2}}{\gamma^{2}-|\Omega|^{2}}+\frac{N(p-1)}{p+1}.\]
From \eqref{Emii} with $a=2|\Omega|^{2}/\gamma^{2}$ we have
\[
2|l_{\Omega}( \phi_{\omega})|\leq \frac{2|\Omega|^{2}}{\gamma^{2}}\|\nabla \phi_{\omega}\|^{2}_{2}+\frac{1}{2}\int_{\mathbb{R}^{N}}V(x)|{\phi}_{\omega}(x)|^{2}dx.
\]
This implies that
\begin{equation}\label{Fem}
\frac{2|l_{\Omega}( \phi_{\omega})|}{\|\phi_{\omega} \|^{p+1}_{p+1}}\leq \frac{2\beta|\Omega|^{2}}{\gamma^{2}}+\frac{1}{2}\frac{\int_{\mathbb{R}^{N}}V(x)|{\phi}_{\omega}(x)|^{2}dx}{\|\phi_{\omega} \|^{p+1}_{p+1}}.
\end{equation}
Now by Lemma \ref{Ls1},  given $\eta>0$ there exists a sequence $\left\{\omega_{n}\right\}$ such that 
\begin{equation}\label{Lhd}
\omega_{n}^{-2}\int_{\mathbb{R}^{N}}V(x)|\widetilde{\phi}_{\omega_{n}}(x)|^{2}\,dx+2\omega_{n}^{-1}l_{\Omega}(\widetilde{\phi}_{\omega_{n}})\leq
4\eta\quad \mbox{for sufficiently large  $n$,}
\end{equation}
where $\omega_{n}\rightarrow \infty$ as $n\rightarrow \infty$. Moreover
\begin{equation}\label{hc1}
\|Q\|^{p+1}_{p+1}-\eta \leq\|\widetilde{\phi}_{\omega_{n}}\|^{p+1}_{p+1}\leq \|Q\|^{p+1}_{p+1}+\eta, \quad \mbox{for sufficiently large  $n$.}
\end{equation}
From \eqref{Eps1}, \eqref{hc1} and \eqref{Lhd} we get
\begin{equation}\label{Cvb1}
\begin{split}
\frac{\int_{\mathbb{R}^{N}}V(x)|{\phi}_{\omega_{n}}(x)|^{2}dx}{\|\phi_{\omega_{n}} \|^{p+1}_{p+1}}\leq
\frac{4\eta}{(\|Q\|^{p+1}_{L^{p+1}}-\eta)}+2\frac{|l_{\Omega}( \phi_{\omega_{n}})|}{\|\phi_{\omega_{n}} \|^{p+1}_{p+1}}.
\end{split}
\end{equation}
Combining \eqref{Fem} and \eqref{Cvb1}, it follows from straightforward calculation that
\[\begin{split}
\frac{1}{2}\frac{\int_{\mathbb{R}^{N}}V(x)|{\phi}_{\omega_{n}}(x)|^{2}dx}{\|\phi_{\omega_{n}} \|^{p+1}_{p+1}}\leq
\frac{4\eta}{(\|Q\|^{p+1}_{L^{p+1}}-\eta)}+2\epsilon\beta,
\end{split}\]
where $\frac{|\Omega|^{2}}{\gamma^{2}}\leq\epsilon $. 
Therefore, taking $\eta$ small enough (note that this implies that $\epsilon$ also is small enough) we have that there exists a sequence $\left\{\omega_{n}\right\}^{\infty}_{n=1}$such that 
\[\partial^{2}_{s}E_{\Omega}(\phi^{s}_{\omega_{n}})|_{s=1}<0, \quad \mbox{for sufficiently large  $n$}.\]
Thus, from Theorem \ref{Ax1} we have that  the standing wave $e^{i\frac{\omega_{n} }{2}t}\phi_{\omega_{n}}(x)$ of \eqref{GP} is unstable. The proof is complete.

\end{proof}

%%%%%%%%%%%%%%%%%%%%%%%%%%%%%%%%%%%%%%%%%%%%%%%%%%%%%%%%%%%%%%%%%%%%%%%%%%%%%%%%%%%%%%%%%%%

%%%%%%%%%%%%%%%%%%%%%%%%%%%%%%%%%%%%%%%%%%%%%%%%%%%%%%%%%%%%%%%%%%%%%%%%%%%%%%%%%%%%%%%%%%%%%%%%%%%%

%%%%%%%%%%%%%%%%%%%%%%%%%%%%%%%%%%%%%%%%%%%%%%%%%%%%%%%%%%%%%%%%%%%%%%%%%%

%%%%%%%%%%%%%%%%%%%%%%%%%%%%%%%%%%%%%%%%%%%%%%%%%%%%%%%%%%%%%%%%%%%%%%%%%%%%%%%%%%%%%%%%%%%%%%%%%%%%

\section{Stability of standing waves}\label{S:5}

This section is devoted to the proof of Theorem \ref{Th2} and Corollary \ref{Et}. The following is the key lemma for our proof.

\begin{lemma}\label{Lxa90}
Let  $|\Omega|<\gamma$ and $1+\frac{4}{N}<p<2^{\ast}$.  For every $r>0$, there exists $q_{0}=q_{0}(r)$, such that if $q<q_{0}$, then 
\begin{equation}\label{Ig}
\inf\left\{E_{\Omega}(u),\quad u\in D_{q}\cap B_{rq/2}\right\}< \inf\left\{E_{\Omega}(u),\quad u\in D_{q}\cap (B_{r}\setminus B_{rq})\right\}.
\end{equation}
\end{lemma}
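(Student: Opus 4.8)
The plan is to reduce both infima to the single scalar quantity $t:=\|u\|_{H}^{2}=\mathfrak{t}[u]$ and to exploit that, on $D_{q}$, the nonlinear term is a \emph{higher} power of $t$ than the quadratic term $\tfrac12 t$, carrying a small prefactor $q^{\sigma}$. First I would record the coercivity coming from $|\Omega|<\gamma$: exactly as in the proof of Lemma~\ref{EEn} (via \eqref{Emii} and Young's inequality), there is a constant $C_{\min}>0$ with $\mathfrak{t}[u]\ge C_{\min}\|\nabla u\|_{2}^{2}$, so that $\|\nabla u\|_{2}^{2}\le C_{\min}^{-1}t$. Inserting this into the sharp Gagliardo--Nirenberg inequality \eqref{GI} and using $\|u\|_{2}^{2}=q$ yields, for every $u\in D_{q}$,
\begin{equation*}
\|u\|_{p+1}^{p+1}\le c_{GN}\,C_{\min}^{-\theta}\,q^{\sigma}\,t^{\theta},\qquad \theta:=\frac{N(p-1)}{4},\quad \sigma:=\frac{p+1}{2}-\frac{N(p-1)}{4}.
\end{equation*}
Here $\theta>1$ because $p>1+\tfrac4N$, and $\sigma>0$ because $p<2^{\ast}$; these are the only two places where the mass-supercritical/energy-subcritical thresholds enter. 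Consequently
\begin{equation*}
E_{\Omega}(u)\ \ge\ \tfrac12 t-C_{1}\,q^{\sigma}\,t^{\theta},\qquad C_{1}:=\frac{2}{p+1}c_{GN}C_{\min}^{-\theta},
\end{equation*}
which is the basic lower bound driving the whole argument.

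Next I would bound the right-hand infimum, over the shell $B_{r}\setminus B_{rq}$ where $rq<t\le r$. Factoring $t$ out of the lower bound and using $t^{\theta}=t\cdot t^{\theta-1}\le r^{\theta-1}t$ gives $E_{\Omega}(u)\ge t\big(\tfrac12-C_{1}r^{\theta-1}q^{\sigma}\big)$. Since $\sigma>0$, I can then fix $q_{0}=q_{0}(r)$ so small that $C_{1}r^{\theta-1}q_{0}^{\sigma}\le\tfrac14$; for $q<q_{0}$ and any $u$ in the shell, using $t>rq$, this gives
\begin{equation*}
\inf_{D_{q}\cap (B_{r}\setminus B_{rq})}E_{\Omega}\ \ge\ \Big(\tfrac12-C_{1}r^{\theta-1}q^{\sigma}\Big)\,rq\ =\ \big(\tfrac12-o(1)\big)\,rq\quad(q\to0).
\end{equation*}

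For the left-hand infimum, over the inner set $B_{rq/2}$, I would use a single test function. Since $R_{\Omega}$ has purely discrete spectrum, the bottom $-\lambda_{0}$ of $\mathfrak{t}$ on $\{\|u\|_{2}=1\}$ (see \eqref{impr}) is attained by a normalized eigenfunction $\psi_{0}$; set $u^{\ast}:=\sqrt{q}\,\psi_{0}\in D_{q}$, so that $\|u^{\ast}\|_{H}^{2}=\mathfrak{t}[u^{\ast}]=-\lambda_{0}q$. As soon as $-\lambda_{0}q\le rq/2$, the function $u^{\ast}$ lies in $D_{q}\cap B_{rq/2}$, and dropping the negative nonlinear term gives
\begin{equation*}
\inf_{D_{q}\cap B_{rq/2}}E_{\Omega}\ \le\ E_{\Omega}(u^{\ast})\ \le\ -\tfrac{\lambda_{0}}{2}\,q.
\end{equation*}
Comparing the two bounds, the desired strict inequality \eqref{Ig} follows once $-\lambda_{0}<r\big(1-2C_{1}r^{\theta-1}q^{\sigma}\big)$; as $q\to0$ the right side tends to $r$, so this holds for all sufficiently small $q$ (shrinking $q_{0}$ further if needed) provided $r>-\lambda_{0}$.

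The main obstacle I anticipate is not any single estimate but the bookkeeping of the threshold between the quadratic bottom $-\lambda_{0}q$ of the energy and the shell value $\sim\tfrac12 rq$. Since the minimum of $\|u\|_{H}^{2}$ over $D_{q}$ equals $-\lambda_{0}q>0$, the inner set $D_{q}\cap B_{rq/2}$ is \emph{empty} unless $r\ge -2\lambda_{0}$; the argument therefore genuinely needs $r$ comparable to (indeed above) $-2\lambda_{0}$, both so that the test function $u^{\ast}$ is admissible and so that the constants in the comparison separate. The remaining care is purely quantitative: keeping the error $C_{1}r^{\theta-1}q^{\sigma}$ uniformly below $\tfrac14$ on the entire shell. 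Everything else --- the coercivity constant $C_{\min}$, the signs of the exponents $\theta,\sigma$, and the attainment of $-\lambda_{0}$ --- is routine given the results already established.
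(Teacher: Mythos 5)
Your proof is correct and follows essentially the same route as the paper: the coercivity of $\mathfrak{t}$ for $|\Omega|<\gamma$ combined with the Gagliardo--Nirenberg inequality \eqref{GI} gives the scalar lower bound $E_{\Omega}(u)\ge \tfrac12 t-C q^{\sigma}t^{\theta}$ on $D_{q}$ (the paper's $\Gamma_{q}$), which controls the infimum over the shell, while a trivial upper bound handles the inner set. The only cosmetic difference is that you estimate the inner infimum by $-\lambda_{0}q/2$ via the ground-state test function $\sqrt{q}\,\psi_{0}$, whereas the paper uses $E_{\Omega}(u)\le\tfrac12\|u\|_{H}^{2}\le rq/4$ for an arbitrary admissible $u$ and compares with $\inf\Gamma_{q}\ge rq/3$; your caveat that $D_{q}\cap B_{rq/2}$ is nonempty only for $r\ge-2\lambda_{0}$ (so the claim cannot hold literally ``for every $r>0$'') is a fair observation that applies equally to the paper's own argument.
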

\begin{proof} Notice that  $-\lambda_{0}>0$. First, we show that $D_{q}\cap B_{r}$ is not empty set iff $q\leq \frac{r}{-\lambda_{0}}$. Indeed,  let $f\in L^{2}(\mathbb{R}^{N})$ be the eigenfunction associated with the eigenvalue $\lambda_{0}$ given in \eqref{impr} such that $\|f\|^{2}_{2}=1$ (the function $f$ can be found in \cite[Section 3]{MATSUNU}). Now we set $\eta(x):=\sqrt{q}f(x)$. For $q\leq \frac{r}{-\lambda_{0}}$,  we see that
\begin{equation*}
\|\eta\|^{2}_{{2}}=q \quad \text{and} \quad \|\eta\|^{2}_{H}=\mathfrak{t}[\eta]=-\lambda_{0}\|\eta\|^{2}_{{2}}\leq r.
\end{equation*}
This implies that $D_{q}\cap B_{r}$ is not-empty. Now, if  $u\in D_{q}\cap B_{r}$, it follows from \eqref{impr},
\begin{equation*}
r\geq \|u\|^{2}_{H}=\mathfrak{t}[u]\geq \lambda_{0}q,
\end{equation*}
that is $q\leq \frac{r}{-\lambda_{0}}$. Next, we show the inequality \eqref{Ig}. A simple computation shows that for  $a>0$ (see proof of Lemma \ref{EEn}),
\begin{equation*}
\frac{1}{2}\mathfrak{t}[u]\geq \left(\frac{1-a}{2}\right)\|\nabla u\|^{2}_{2}+\frac{1}{2}\left(\gamma^{2}-\frac{|\Omega|^{2}}{a}\right)\|xu\|^{2}_{2},
\end{equation*}
Since $|\Omega|<\gamma$, we infer that there exists a constant $C>0$ such that $\|\nabla u\|_{2}\leq C\|u\|_{H}$. By Gagliardo-Nirenberg inequality we have 
\begin{equation*}
\begin{cases} 
E_{\Omega}(u)\geq \frac{1}{2}\| u\|^{2}_{H}-Cq^{\frac{p+1}{2}-\frac{N(p-1)}{4}}\| u\|^{\frac{N(p-1)}{2}}_{H}=\Gamma_{q}(\| u\|_{H}),\\
E_{\Omega}(u)\leq \frac{1}{2}\| u\|^{2}_{H}=\Phi_{q}(\| u\|_{H}),
\end{cases} 
\end{equation*}
where 
\begin{equation*} 
\begin{cases}
\Gamma_{q}(t)=\frac{1}{2}t(1-2Cq^{\chi}t^{\delta})\\
\Phi_{q}(t)=\frac{1}{2}t
\end{cases} 
\end{equation*}
and
\begin{equation*} 
 \chi=\frac{1}{2}\left(p+1-\frac{N(p-1)}{2}\right)>0, \quad \delta=\frac{N(p-1)-4}{4}>0.
\end{equation*}
It is clear that to prove the inequality \eqref{Ig}, we need only show that there exists $0<q_{0}=q_{0}(r)\ll 1$ such that, for every $q<q_{0}$,
\begin{equation*} 
\Phi_{q}(qr/2) <\inf_{t\in (rq,r)}\Gamma_{q}(t).
\end{equation*}
Indeed, it is not difficult to show that there exists $q_{0}>0$, depending only on $r$, $N$ and $p$ such that, if $q<q_{0}$, then $\Gamma_{q}(t)\geq \frac{1}{3}t$ for $t\in (0,r)$. This implies that
\begin{equation*} 
\Phi_{q}(qr/2)=\frac{1}{4}qr<\frac{1}{3}qr\leq\inf_{t\in (rq,r)}\Gamma_{q}(t),
\end{equation*}
and the proof of lemma is complete.
\end{proof}

\begin{proof}[ \bf {Proof of Theorem \ref{Th2}}]
Let $\left\{u_{n}\right\}$ be a minimizing sequence for $J^{r}_{q}$.  Then $\|u_{n}\|^{2}_{2}=q$ and $\|u_{n}\|^{2}_{H}\leq r$. Since $\Sigma\hookrightarrow L^{2}(\mathbb{R}^{N})$ is compact, there exists $\varphi\in \Sigma$, such that $u_{n}\rightharpoonup u$ weakly in $\Sigma$ and $\|\varphi\|^{2}_{2}=q$. Moreover, by the lower semi-continuity we have
\begin{gather*} 
\|\nabla \varphi\|_{2}^{2}+\int_{\mathbb{R}^{N}}V(x)|\varphi(x)|^{2}dx+2l_{\Omega}(\varphi)\\
 \leq\liminf_{n\rightarrow\infty}\left\{ \|\nabla u_{n}\|_{2}^{2}+\int_{\mathbb{R}^{N}}V(x)|u_{n}(x)|^{2}dx+2l_{\Omega}(u_{n}) \right\}.
\end{gather*}
Thus $\varphi\in D_{q}\cap B_{r}$. On the other hand, since $u_{n}\rightarrow \varphi$ in $L^{2}(\mathbb{R}^{N})$, Gagliardo-Nirenberg inequality implies that $u_{n}\rightarrow \varphi$ in $L^{p+1}(\mathbb{R}^{N})$. Again, from  lower semi-continuity we infer $E_{\Omega}(\varphi)\leq \liminf_{n\rightarrow \infty} E_{\Omega}(u_{n})=J^{r}_{q}$. Therefore,  $u\in \mathcal{G}^{r}_{q}$ and $u_{n}\rightarrow \varphi$ in $\Sigma(\mathbb{R}^{N})$. In particular, $l_{\Omega}(u_{n})\rightarrow l_{\Omega}(\varphi)$ as $n\rightarrow\infty$, which completes the proof of (i).

Now we prove (ii).  From \eqref{Ig}, we see that  $\varphi$ does not belong to the boundary of $D_{q}\cap B_{r}$; that is, $E_{\Omega}$ has a local minimum in $\varphi$. We also notice that $\varphi\in B_{rq}$. Therefore, there exists a Lagrange multiplier $\omega\in \mathbb{R}$ such that $\varphi$ satisfies the stationary equation
\begin{equation*} 
-\Delta \varphi+\omega\varphi+V(x)\varphi-2|\varphi|^{p-1}\varphi+2L_{\Omega}\varphi=0.
\end{equation*}
Notice that
\begin{equation*}
J^{r}_{q}\leq E_{\Omega}(\eta)=\frac{1}{2}\mathfrak{t}[\eta]-\frac{2}{p+1}\|\eta\|^{p+1}_{p+1}<-\frac{1}{2}\lambda_{0}q,
\end{equation*}
where  $\eta\in D_{q}\cap B_{r}$ is given in Lemma \ref{Lxa90}.  This implies that
\begin{align}\nonumber
\omega \|\varphi\|^{2}_{2}&=- 2E_{\Omega}(\varphi)+\frac{2(p-1)}{(p+1)}\|\varphi\|^{p+1}_{p+1}      \\\nonumber
%\label{Pe1}
&=-2J^{r}_{q}+\frac{2(p-1)}{p+1}\|\varphi\|^{p+1}_{p+1}> \lambda_{0}q.
\end{align}
Since $\|\varphi\|^{2}_{2}=q$, it follows that $\omega>-\lambda_{0}$. Moreover, from $I_{\omega}(\varphi)=0$, we infer
\begin{align*}
\omega \|\varphi\|^{2}_{2}&=-\mathfrak{t}[\varphi]+2\|\varphi\|^{p+1}_{p+1}\\
&\leq -\|\varphi\|^{2}_{H}+C\|\varphi\|^{\frac{N(p-1)}{2}}_{H}q^{\frac{p+1}{2}-\frac{N(p-1)}{4}}\\
&=-\|\varphi\|^{2}_{H}\left(1-C\|\varphi\|^{\frac{N(p-1)}{2}-2}_{H}q^{\frac{p+1}{2}-\frac{N(p-1)}{4}}\right)\\
&\leq -\|\varphi\|^{2}_{H}\left(1-C(rq)^{\frac{N(p-1)}{4}-1}q^{\frac{p+1}{2}-\frac{N(p-1)}{4}}\right).\\
&\leq-\|\varphi\|^{2}_{H}\left(1-Cq^{\frac{p-1}{2}}\right),
\end{align*}
Thus, by using the fact that $\|\varphi\|^{2}_{H}\geq -\lambda_{0}\|\varphi\|^{2}_{2}$ we see that
\begin{equation*}
\omega \leq \lambda_{0}\left(1-Cq^{\frac{p-1}{2}}\right),
\end{equation*}
and finishes the proof.
\end{proof}

Now, we are able to prove the stability the set $\mathcal{G}^{r}_{q}$ given in Corollary \ref{Et}.
\begin{proof}[ \bf {Proof of Corollary \ref{Et}}]
We verify the statement by contradiction. Assume that there exist $\epsilon>0$ and two sequences $\left\{u_{0,n}\right\}\subset \Sigma$ and $\left\{t_{n}\right\}\subset \mathbb{R}$ such that 
\begin{align}\label{L33}
&\inf_{\varphi\in \mathcal{G}^{r}_{q}}\|u_{0, n}-\varphi\|_{\Sigma}<\frac{1}{n} \\\label{L41}
&\inf_{\varphi\in \mathcal{G}^{r}_{q}}\|u_{n}(t_{n})-\varphi\|_{\Sigma}\geq \epsilon\quad \text{ for every $n\in\mathbb{N}$,} 
\end{align}
where $u_{n}(t)$ is the solution to \eqref{GP} with initial datum $u_{0, n}$. A standard argument shows that we can assume $\|u_{0, n}\|^{2}_{2}=q$. The conservation of mass and energy implies that
\begin{align}\label{esc1}
& \|u_{n}(t_{n})\|^{2}_{2}=\|u_{0, n}\|^{2}_{2}=q  \quad \text{ for every $n$,} \\\label{esc2}
& E(u_{n}(t_{n}))=E(u_{0, n})\rightarrow J^{r}_{q} \quad \text{ as $n\rightarrow +\infty$.}
\end{align}
We claim that  there exists a  subsequence $\left\{u_{n_{k}}(t_{n_{k}})\right\}$ of $\left\{u_{n}(t_{n})\right\}$ such that $u_{n_{k}}(t_{n_{k}})\in D_{q}\cap B_{r}$. Indeed, from \eqref{esc1}, we only need to show that $\|u_{n_{k}}(t_{n_{k}})\|^{2}_{H}\leq r$. Suppose, by contradiction, that  there exists $K\geq 1$ such that  $\|u_{n}(t_{n})\|^{2}_{H}> r$ for all $n\geq K$.  By continuity and \eqref{L33}, we infer that there exists $t_{n}^{\ast}\in (0, t_{n})$ such that $\|u_{n}(t^{\ast}_{n})\|^{2}_{H}= r$. Thus, $u_{n}(t^{\ast}_{n})\in D_{q}\cap B_{r}$ and, from \eqref{esc2}, we see that  $\left\{u_{n}(t^{\ast}_{n})\right\}$ is a minimizing sequence of $J^{r}_{q}$. By Theorem \ref{Th2}, there exists $\psi\in\Sigma$ such that $\|\psi\|^{2}_{2}=q$ and $\|\psi\|^{2}_{\Sigma}=r$, which is a contradiction with the fact that the critical point $\psi$ does not belong to the boundary of $D_{q}\cap B_{r}$ (see Lemma \ref{Lxa90}) and the claim follows immediately. On the other hand, using \eqref{esc2}, we see that $\left\{u_{n_{k}}(t_{n_{k}})\right\}$ is  a minimizing sequence for $J^{r}_{q}$. Again, by Theorem \ref{Th2} there exists $f\in \mathcal{G}^{r}_{q}$ such that,  passing to a subsequence if necessary, $\left\{u_{n_{k}}(t_{n_{k}})\right\}$ converges strongly to $f$ in $\Sigma$, which is a contradiction with \eqref{L41}. This completes the proof of Corollary.

\end{proof}

\section*{Acknowledgments} The authors would like to express their sincere thanks to the referees for useful comments
and suggestions that improved the paper.

%\bibliographystyle{siam}
%\bibliography{bibliografia} 

\begin{thebibliography}{10}

\bibitem{AntoineTangZhang2016}
{\sc X.~Antoine, Q.~Tang, and Y.~Zhang}, {\em On the ground states and dynamics
  of space fractional nonlinear {S}chr\"odinger/{G}ross-{P}itaevskii equations
  with rotation term and nonlocal nonlinear interactions}, J. Comput. Phy., 325
  (2016), pp.~74--97.

\bibitem{AEWP}
{\sc P.~Antonelli, D.~Marahrens, and C.~Sparber}, {\em On the {C}auchy problem
  for nonlinear {S}chr\"odinger equations with rotation}, Discrete Contin. Dyn.
  Syst., 32 (2012), pp.~703--715.

\bibitem{ANS2019}
{\sc J.~Arbunich, I.~Nenciu, and C.~Sparber}, {\em Stability and instability
  properties of rotating {B}ose-{E}instein condensates}, Lett Math Phys, 109
  (2019).

\bibitem{ACM2020}
{\sc A.~Ardila, L.~Cely, and M.~Squassina}, {\em Logarithmic {B}ose--{E}instein
  condensates with harmonic potential}, Asymptotic Analysis, 116 (2020),
  pp.~27--40.

\bibitem{BaoCai2013}
{\sc W.~Bao and Y.~Cai}, {\em Mathematical theory and numerical methods for
  {B}ose-{E}instein condensation}, Kinetic and Related Models, 6 (2013).

\bibitem{BHHZ}
{\sc N.~Basharat, H.~Hajaiej, Y.~Hu, and S.~Zheng}, {\em Threshold for blowup
  and stability for nonlinear {S}chr\"odinger equation with rotation},
  arXiv:2002.04722,  (2020).

\bibitem{BEBOJEVI2017}
{\sc J.~Bellazzini, N.~Boussaid, L.~Jeanjean, and N.~Visciglia}, {\em Existence
  and stability of standing waves for supercritical {NLS} with a partial
  confinement}, Comm. Math. Physics, 353 (2017), pp.~229--339.

\bibitem{CB}
{\sc T.~Cazenave}, {\em Semilinear {S}chr\"odinger Equations}, Courant Lecture
  Notes in Mathematics,10, American Mathematical Society, Courant Institute of
  Mathematical Sciences, 2003.

\bibitem{DR5}
{\sc T.~Duyckaerts and S.~Roudenko}, {\em Going beyond the threshold:
  scattering and blow-up in the focusing {NLS} equation}, Comm. Math. Phys, 334
  (2015), pp.~1573--1615.

\bibitem{AFP1}
{\sc A.~Fetter}, {\em Rotating trapped bose-einstein condensates}, Rev. Mod.
  Phys, 81 (2009).

\bibitem{Fukui2000}
{\sc R.~Fukuizumi}, {\em Stability and instability of standing waves for the
  {S}chr\"odinger equation with harmonic potential}, Discrete Contin. Dynam.
  Systems, 7 (2000), pp.~525--544.

\bibitem{FOIB2003}
{\sc R.~Fukuizumi and M.~Ohta}, {\em Instability of standing waves for
  nonlinear {S}chr\"odinger equations with potential}, Diff.Integral Eq., 16
  (2003).

\bibitem{Guo2011}
{\sc Q.~Guo}, {\em Nonlinear {S}chr\"odinger equations with coupled
  {H}artree-type terms and rotation}, J. Math. Anal. Appl., 383 (2011),
  pp.~137--146.

\bibitem{HR1}
{\sc J.~Holmer and S.~Roudenko}, {\em On blow-up solutions to the {3D} cubic
  nonlinear {S}chr\"odinger equation}, Appl. Math. Res. Express, 1 (2007).

\bibitem{HR23}
\leavevmode\vrule height 2pt depth -1.6pt width 23pt, {\em A sharp condition
  for scattering of the radial 3{D} cubic nonlinear {S}chr\"odinger equations},
  Commun. Math. Phys, 282 (2008), pp.~435--467.

\bibitem{RM2006}
{\sc R.~Ignat and V.~Millot}, {\em The critical velocity for vortex existence
  in a two-dimensional rotating {B}ose-{E}instein condensate.}, J. Funct.
  Anal., 233 (2006), pp.~260--306.

\bibitem{LiebSeiring2006}
{\sc E.~Lieb and R.~Seiringer}, {\em Derivation of the {G}ross-{P}itaevskii
  equation for rotating {B}ose gases}, Comm. Math. Phys., 264 (2006),
  pp.~505--537.

\bibitem{MATSUNU}
{\sc H.~Matsumoto and N.~Ueki}, {\em Spectral analysis of {S}chr\"odinger
  operators with magnetic fields}, J. Funct. Anal., 140 (1996), pp.~218--225.

\bibitem{ARE}
{\sc A.~Mohamed and G.~Raikov}, {\em On the spectral theory of the
  {S}chr\"odinger operator with electromagnetic potential}, in
  Pseudo-differential calculusand mathematical physics, A.~V. Berlin, ed.,
  vol.~5 of Math. Top, 1994, pp.~298--390.

\bibitem{OhSi}
{\sc M.~Ohta}, {\em Strong instability of standing waves for nonlinear
  {S}chr\"odinger equations with harmonic potential}, Funkcial. Ekvac, 61
  (2018).

\bibitem{RSP2}
{\sc R.~Seiringer}, {\em {G}ross-{P}itaevskii theory of the rotating {B}ose
  gas}, Comm. Math. Phys, 229 (2002), pp.~491--509.

\bibitem{ZEc}
{\sc J.~Zhang}, {\em Stability of attractive {B}ose-{E}instein condensates}, J.
  Statist. Phys., 101 (2000), pp.~731--746.

\end{thebibliography}

\end{document}